\def\eps{\varepsilon }
\def\RR{\mathbb R}
\def\bb{\mathcal B}
\def\uu{\mathcal U}
\def\ww{\mathcal W}
\def\uuu{\overline{\mathcal U}}
\def\uuuq{\overline{\mathcal U_q}}
\def\vv{\mathcal V}
\newcommand{\set}[1]{\left\lbrace #1\right\rbrace}
\providecommand{\abs}[1]{\left\lvert#1\right\rvert}
\newcommand{\remove}[1]{ }
\DeclareMathOperator{\dime}{dim}
\newtheorem{theorem}{Theorem}[section]
\newtheorem{proposition}[theorem]{Proposition}
\newtheorem{lemma}[theorem]{Lemma}
\theoremstyle{definition}
\theoremstyle{remark}
\newtheorem*{remark}{Remark}
\numberwithin{equation}{section}
\begin{document}
\title[Hausdorff dimension of univoque sets]{Hausdorff dimension of univoque sets and Devil's staircase}
\author[V. Komornik]{Vilmos Komornik}
\address{Département de mathématique\\
         Université de Strasbourg\\
         7 rue René Descartes\\
         67084 Strasbourg Cedex, France}
\email{komornik@math.unistra.fr}
\author[D. Kong]{Derong Kong}
\address{School of Mathematical Science, Yangzhou University,
    Yangzhou, JiangSu 225002, People's Republic of China}
\email{derongkong@126.com}
\author[W. Li]{Wenxia Li}
\address{Department of Mathematics, Shanghai Key Laboratory of PMMP, East China Normal University, Shanghai 200062,
People's Republic of China}
\email{wxli@math.ecnu.edu.cn}
\thanks{}
\subjclass[2000]{Primary: 11A63, Secondary: 10K50, 11K55,  37B10}
\keywords{Non-integer bases, Cantor sets, $\beta$-expansion, greedy expansion, quasi-greedy expansion, unique expansion, Hausdorff dimension, topological entropy, self-similarity}
\date{Version of 2015-03-02-a}

\begin{abstract}
We fix a positive integer $M$, and we consider expansions in arbitrary real bases $q>1$ over the alphabet $\set{0,1,\ldots,M}$.
We denote by $\uu_q$ the set of real numbers having a unique expansion.
Completing many former investigations, we give a formula for the Hausdorff dimension $D(q)$ of $\uu_q$ for each $q\in (1,\infty)$.
Furthermore, we prove that the dimension function $D:(1,\infty)\to[0,1]$ is continuous, and has a bounded variation.
Moreover, it has a Devil's staircase behavior in $(q',\infty)$, where $q'$ denotes the Komornik--Loreti constant: although $D(q)>D(q')$ for all $q>q'$, we have
$D'<0$ a.e. in $(q',\infty)$.
During the proofs we improve and generalize a theorem of Erd\H os et al. on the existence of large blocks of zeros in  $\beta$-expansions, and
we determine  for all $M$ the Lebesgue measure and the Hausdorff dimension of the set $\uu$ of bases in which $x=1$ has a unique expansion.
\end{abstract}
\maketitle

\section{Introduction}\label{s1}

Fix a positive integer $M$ and an \emph{alphabet} $\set{0,1,\ldots,M}$.
By a \emph{sequence}  we mean an element $c=(c_i)$ of $\set{0,1,\ldots,M}^{\infty}$.

Given a real \emph{base} $q>1$, by an \emph{expansion} of a real number $x$ we mean a sequence $c=(c_i)$ satisfying the equality
\begin{equation*}
\pi_q(c):=\sum_{i=1}^{\infty}\frac{c_i}{q^i}=x.
\end{equation*}

Expansions of this type in \emph{non-integer} bases have been extensively investigated since a pioneering paper of Rényi \cite{Renyi1957}.
One of the striking features of such bases is that generically a number has a continuum of different expansions, a situation quite opposite to that of integer bases; see, e.g., \cite{ErdosJooKomornik1990} and Sidorov \cite{Sid2003a}.
However, surprising unique expansions have also been discovered by Erdős et al. \cite{ErdosHorvathJoo1991}, and they have stimulated many works during the last 25 years.

We refer to the papers \cite{KomornikLoreti2007}, \cite{DeVries2008}, 
\cite{DeVries2009}, \cite{DeVriesKomornik2009}, \cite{DeVriesKomornik2010}, \cite{Baker2012} and surveys \cite{Sidorov2003}, \cite{Komornik2011} and \cite{DeVriesKomornik2014} for more information.

Let us denote by $\uu_q$ the set of numbers $x$ having a unique expansion and by $\uu'_q$ the set of the corresponding expansions.
The topological and combinatorial structure of these sets have been described in \cite{DeVriesKomornik2009}.
The present paper is a natural continuation of this work, concerning the measure-theoretical aspects.

Daróczy and Kátai \cite{DaroczyKatai1995} have determined the Hausdorff dimension of $\uu_q$ when $M=1$ and $q$ is a Parry number.
Their results were extended by Kallós and Kátai \cite{Kallos1999}, \cite{Kallos2001}, \cite{KataiKallos2001}, Glendinning and Sidorov \cite{GlendinningSidorov2001}, Kong et al. \cite{KongLiDekking2010},  \cite{KongLi2014}, and in \cite{DeVriesKomornik2010}, \cite{BaatzKomornik2011}.

We recall from \cite{KomornikLoreti1998} and \cite{KomornikLoreti2002} that there exists a smallest base $1<q'<M+1$ (depending on $M$) in which $x=1$ has a unique expansion: the so-called \emph{Komornik--Loreti constant}.

We also recall two theorems on the \emph{dimension function}
\begin{equation*}
D(q):=\dim_H\uu_q,\quad 1<q<\infty,
\end{equation*}
obtained respectively in  \cite{GlendinningSidorov2001}, \cite{KongLiDekking2010} and  in \cite{KongLi2014}:

\begin{theorem}\label{t11}
The  function $D$ vanishes in $(1,q']$, and $D>0$ in $(q',\infty)$.
Its maximum $D(q)=1$ is attained only in $q=M+1$.
\end{theorem}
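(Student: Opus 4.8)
The plan is to reduce all three parts to the behavior of the topological entropy of the unique‑expansion subshift. Writing $\overline{\mathcal{U}_q'}$ for the closure in $\set{0,1,\ldots,M}^{\infty}$ of the set $\mathcal{U}_q'$ of unique expansions, the structure theory recalled above yields that $\overline{\mathcal{U}_q'}$ is a one‑sided subshift, that $\overline{\mathcal{U}_q}$ and $\mathcal{U}_q$ differ by a countable set, and that $\overline{\mathcal{U}_q}$ is graph‑directed self‑similar with contraction ratio $1/q$; hence
\[
D(q)=\dim_H\mathcal{U}_q=\dim_H\overline{\mathcal{U}_q}=\frac{h(q)}{\log q},\qquad h(q):=h_{\mathrm{top}}\bigl(\overline{\mathcal{U}_q'}\bigr),
\]
and it suffices to understand $h$. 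Two facts will be used throughout. First, $h$ is nondecreasing: the quasi‑greedy expansion $\alpha(q)$ of $1$ is strictly increasing in $q$, while the lexicographic conditions characterizing $\mathcal{U}_q'$ (each asserting that some shift of $(c_i)$ or of $(\overline{c_i})$ is $\prec\alpha(q)$) only relax as $q$ grows, so $\mathcal{U}_{q_1}'\subseteq\mathcal{U}_{q_2}'$ whenever $q_1<q_2$. Second, $0\le h(q)\le\log q$, the upper bound holding because a unique expansion is in particular a greedy expansion, so $\overline{\mathcal{U}_q'}\subseteq X_q$, the greedy coding of $[0,M/(q-1)]$, whose topological entropy equals $\log q$.

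\emph{Vanishing on $(1,q']$.} For $q<q'$ the set $\mathcal{U}_q$ is at most countable: it equals $\set{0,M/(q-1)}$ below the generalized golden ratio and is countably infinite between it and $q'$; in either case $D(q)=0$. For $q=q'$ the critical sequence $\alpha(q')$ is the (generalized) Thue--Morse sequence, and its substitutive structure rigidly constrains admissible sequences at every scale, forcing $\overline{\mathcal{U}_{q'}'}$ to have subexponential word complexity; hence $h(q')=0$ and $D(q')=0$.

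\emph{Positivity on $(q',\infty)$.} By monotonicity of $h$ it suffices to prove $h(q)>0$ for $q$ in a right neighborhood of $q'$. For such $q$ one has $\alpha(q)\succ\alpha(q')$, so $\alpha(q)$ eventually strictly exceeds the Thue--Morse sequence, and this slack allows an Erd\H os--Jo\'o--Komornik‑type block construction: one builds two distinct finite words $u,v$ of a common length $\ell$ such that \emph{every} concatenation in $\set{u,v}^{\infty}$ obeys the admissibility conditions defining $\mathcal{U}_q'$. Then $\set{u,v}^{\infty}\subseteq\mathcal{U}_q'$ carries a full $2$‑shift, so $h(q)\ge(\log 2)/\ell>0$ and $D(q)>0$. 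I expect this block construction to be the main obstacle: $u$ and $v$ must be assembled from the self‑similar structure of $\alpha(q')$, and the position of $\alpha(q)$ above $\alpha(q')$ must be quantified, so that admissibility holds \emph{uniformly} over all concatenations.

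\emph{The maximum.} If $q=M+1$, every $x\in[0,1]$ has an expansion and only the countably many $x$ with a finite expansion have a second, so $\mathcal{U}_{M+1}$ contains $[0,1]$ minus a countable set and $D(M+1)=1$. If $q>M+1$, the similarities $x\mapsto(x+i)/q$ $(0\le i\le M)$ have pairwise disjoint images, their attractor equals $\mathcal{U}_q$, and the strong separation condition gives $D(q)=\log(M+1)/\log q<1$. For $q'<q<M+1$ we must show $h(q)<\log q$. Here $\overline{\mathcal{U}_q'}$ is a \emph{proper} closed subshift of $X_q$: the sequence $M0^{\infty}$ is the greedy expansion of $M/q$, hence lies in $X_q$, but its leading digit $M$ would force the reflection of its tail $0^{\infty}$, namely $M^{\infty}$, to satisfy $M^{\infty}\preceq\alpha(q)$, which fails precisely because $q<M+1$; so $M0^{\infty}\notin\overline{\mathcal{U}_q'}$. (Equivalently, $\mathcal{U}_q$ is the survivor set of the greedy $q$‑transformation for a nonempty open ``switch interval'' that degenerates to a point only when $q=M+1$.) Since the greedy $q$‑transformation is a transitive piecewise‑linear map of constant slope $q$, it is intrinsically ergodic with a fully supported measure of maximal entropy, of entropy $\log q$; therefore every proper closed subshift of $X_q$ has strictly smaller entropy --- a measure of maximal entropy carried by such a subshift would, by uniqueness, coincide with the fully supported one, forcing the subshift to be all of $X_q$. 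Hence $h(q)<\log q$, i.e.\ $D(q)<1$. Combining these ranges, $D(q)<1$ for every $q\ne M+1$, so the maximal value $1$ is attained only at $q=M+1$.
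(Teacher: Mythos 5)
The paper does not actually prove Theorem \ref{t11}: it is recalled from Glendinning--Sidorov, Kong--Li--Dekking and Kong--Li, so your proposal is in effect an attempt to re-prove the cited results, and as such it has genuine gaps. The two assertions that carry all the difficulty are exactly the ones you do not establish. For $q=q'$ you claim that the substitutive structure of the generalized Thue--Morse sequence ``forces'' subexponential word complexity of $\overline{\mathcal{U}_{q'}'}$; this is precisely the nontrivial content of the cited results (at $q'$ the set $\mathcal{U}_{q'}$ is uncountable, so countability arguments do not help), and no argument is given. For $q>q'$ your plan rests on producing two words $u,v$ whose free concatenations all satisfy the admissibility conditions, and you yourself flag this construction as ``the main obstacle'' without carrying it out; but that construction \emph{is} the proof of positivity. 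In addition, the opening reduction $D(q)=h(q)/\log q$ via ``$\overline{\mathcal{U}_q}$ is graph-directed self-similar'' is unjustified: $\overline{\mathcal{U}_q'}$ need not be sofic, and in this paper that formula is Theorem \ref{t13}, whose proof occupies Sections \ref{s2}--\ref{s3} and itself invokes Theorem \ref{t11}. (You only need the easy covering bound $D\le h/\log q$ and the direct two-map IFS lower bound coming from $\set{u,v}^{\infty}$, so this circularity is avoidable, but as written it is there.)

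The argument for $D(q)<1$ on $(q',M+1)$ contains an actual error. For $M\ge 3$ one has $q'<M$ (the generalized Komornik--Loreti constant grows roughly like $M/2$), so on a nonempty part of the range $\lfloor q\rfloor<M$ and the alphabet is redundant. There the greedy map on $[0,M/(q-1)]$ is \emph{not} transitive: $[0,1)$ is absorbing, every orbit except the fixed point $M/(q-1)$ eventually enters it, and from then on only digits $\le\lfloor q\rfloor$ occur. Hence your $X_q$ is not intrinsically ergodic with a fully supported measure of maximal entropy; its core (the classical $\beta$-shift on the digits $0,\ldots,\lfloor q\rfloor$) is a \emph{proper} closed subshift of $X_q$ with the same entropy $\log q$, so the key step ``proper closed subshift $\Rightarrow$ strictly smaller entropy'' is false, and your witness $M0^{\infty}$ (and the forbidden words $M0^{k}$ it yields) lies entirely in the transient part, whose removal does not lower the entropy when the core never uses the digit $M$. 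Even for $M=1,2$, where $\lfloor q\rfloor=M$ on $(q',M+1)$, the transient piece already spoils transitivity and full support as stated. The step can be repaired: pass first to $\widetilde{\mathcal{U}}_q'$ (entropy-preserving by Lemma \ref{l25}), note $\widetilde{\mathcal{U}}_q'\subseteq\widetilde{\mathcal{V}}_q'\subseteq\set{c:\sigma^k c\le\beta(q)\ \text{for all}\ k}$, which is the genuine $\beta$-shift of entropy $\log q$ and is intrinsically ergodic with fully supported maximal measure by Hofbauer's theorem, and observe that $\widetilde{\mathcal{V}}_q'$ forbids the core word $0^N$ as soon as $\alpha_1(q)\cdots\alpha_N(q)\ne M^N$; this gives $h(\mathcal{U}_q')<\log q$ and then $D(q)<1$ by the covering bound. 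But as written, your intrinsic-ergodicity argument fails for general $M$.
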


It follows from this theorem that $\uu_q$ is a (Lebesgue) null set for all $q\ne M+1$, while $\uu_{M+1}\subseteq [0,1]$ has  measure one because its complementer set is countable in $[0,1]$.
Since $\uuuq\setminus\uu_q$ is countable for each $q$ (see \cite{DeVriesKomornik2009}), the same properties hold for $\uuuq$ as well.

\begin{theorem}\label{t12}
For almost all $q>1$, $\uu_q'$ is a subshift, and
\begin{equation}\label{11}
D(q)=\frac{h(\uu_q')}{\log q},
\end{equation}
where $h(\uu_q')$ denotes the topological entropy of $\uu_q'$.

Furthermore, the function $D$ is differentiable almost everywhere.
\end{theorem}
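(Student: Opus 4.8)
The plan is to identify $\uu_q$, up to a bi-Lipschitz change of metric, with a subshift, to recast $D(q)$ as a topological entropy, and then to exploit that this entropy is a monotone function of $q$. Write $\alpha(q)=(\alpha_i(q))$ for the quasi-greedy expansion of $1$ in base $q$, and metrize $\set{0,1,\ldots,M}^\infty$ by $d_q(c,d)=q^{-n}$, where $n$ is the least index with $c_n\ne d_n$. By \cite{DeVriesKomornik2009}, $(c_i)\in\uu_q'$ precisely when for every $n$ one has $(c_{n+1}c_{n+2}\cdots)<\alpha(q)$ whenever $c_n<M$ and $(\overline{c_{n+1}}\,\overline{c_{n+2}}\cdots)<\alpha(q)$ whenever $c_n>0$, where $\overline c:=M-c$; in particular $\uu_q'$ is shift-invariant. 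Relaxing both strict inequalities to $\le$ gives a closed, shift-invariant set $\vv_q'\supseteq\uu_q'$ --- a genuine subshift --- with $\vv_q'\setminus\uu_q'$ countable. The first task is to show that $\uu_q'$ is itself closed, hence a subshift, for almost every $q$: a point of $\vv_q'\setminus\uu_q'$ lying in $\overline{\uu_q'}$ forces $\alpha(q)$ into a special combinatorial shape, and the bases producing such $\alpha(q)$ form a Lebesgue-null set.

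Next I would prove $D(q)=h(\vv_q')/\log q$. The upper bound is immediate: $\pi_q\colon(\set{0,\ldots,M}^\infty,d_q)\to\RR$ is Lipschitz with constant $M/(q-1)$ and $\uu_q\subseteq\pi_q(\vv_q')$, so $\dim_H\uu_q\le\dim_H\vv_q'=h(\vv_q')/\log q$, the last equality being the standard identity (through the Shannon--McMillan--Breiman theorem and the variational principle) between the Hausdorff dimension of a subshift in the metric $d_q$ and its topological entropy over $\log q$. For the matching lower bound I would, for each large $L$, use the subshift of finite type $Z_{q,L}\subseteq\uu_q'$ cut out by the finite window conditions: whenever $c_n<M$ the block $c_{n+1}\cdots c_{n+L}$ is lexicographically strictly below $\alpha_1(q)\cdots\alpha_L(q)$, and whenever $c_n>0$ it is strictly above $\overline{\alpha_1(q)}\cdots\overline{\alpha_L(q)}$. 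These conditions force every tail of every element of $Z_{q,L}$ occurring after a digit $<M$ to disagree with $\alpha(q)$ within its first $L$ coordinates; since on unique expansions the lexicographic order agrees with the numerical order, and since for $q\notin\uu$ the sequence $\alpha(q)$ does not lie in the compact set $Z_{q,L}\subseteq\uu_q'$, this yields a constant $\beta_{q,L}<1$ with $\pi_q\big(c_{n+1}c_{n+2}\cdots\big)\le\beta_{q,L}$ for all such tails, whence $\pi_q(d)-\pi_q(c)\ge(1-\beta_{q,L})q^{-n}$ whenever $c,d\in Z_{q,L}$ first differ at position $n$ with $c_n<d_n$. Thus $\pi_q$ is bi-Lipschitz on $Z_{q,L}$, so $\dim_H\uu_q\ge\dim_H\pi_q(Z_{q,L})=h(Z_{q,L})/\log q$; letting $L\to\infty$ the entropies $h(Z_{q,L})$ increase to $h(\vv_q')$, which gives $\dim_H\uu_q\ge h(\vv_q')/\log q$. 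Since $\mathcal L_n(\uu_q')$ and $\mathcal L_n(\vv_q')$ grow at the same exponential rate, $h(\uu_q')=h(\vv_q')$, and on the full-measure set of bases where $\uu_q'$ is a subshift this is the asserted formula $D(q)=h(\uu_q')/\log q$.

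The differentiability of $D$ then reduces to a monotonicity observation. Since $q\mapsto\alpha(q)$ is strictly increasing in the lexicographic order, $q_1<q_2$ gives $\vv_{q_1}'\subseteq\vv_{q_2}'$, hence $h(\vv_{q_1}')\le h(\vv_{q_2}')$; so $e(q):=h(\vv_q')$ is non-decreasing on $(1,\infty)$ and therefore, by Lebesgue's theorem, differentiable almost everywhere. As $q\mapsto1/\log q$ is $C^\infty$ on $(1,\infty)$ and $D(q)=e(q)/\log q$, the function $D$ is differentiable almost everywhere.

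The step I expect to be the real obstacle is the lower bound, specifically keeping control, uniformly over a full-measure set of bases, of the entropy loss $h(\vv_q')-h(Z_{q,L})$ and of the gap $1-\beta_{q,L}$: a long block of zeros in $\alpha(q)$ both lets a short cylinder of $\uu_q'$ be stretched by $\pi_q$ onto a disproportionately long interval and can make the exponential growth rate of $\mathcal L_n(\uu_q')$ fail to be a genuine limit. Taming the bases whose $\alpha(q)$ carries such blocks --- and, alongside them, the exceptional bases where $\uu_q'$ is not closed and the bases in $\uu$ where $\alpha(q)$ itself belongs to $\uu_q'$ --- is precisely where one invokes the improved and generalized version, announced in the abstract, of the theorem of Erdős et al. on blocks of zeros in $\beta$-expansions; these bases are then swept into the null set outside which the theorem is stated.
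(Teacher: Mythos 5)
Two things before the substance: this theorem is not proved in the paper at all --- it is recalled from \cite{GlendinningSidorov2001}, \cite{KongLiDekking2010} and \cite{KongLi2014} --- so the fair comparison is with the machinery the paper builds in Sections \ref{s2}--\ref{s4} to prove the stronger Theorems \ref{t13} and \ref{t14}. Your architecture (finite-window subshifts of finite type $Z_{q,L}\subseteq\uu_q'$, bi-Lipschitz control of $\pi_q$ on them, symbolic dimension equal to entropy over $\log q$) is essentially that machinery, and the upper bound and the bi-Lipschitz separation estimate are fine in outline. But the two load-bearing steps are asserted, not proved. The claim that $h(Z_{q,L})$ increases to $h(\vv_q')$, together with the claim that the languages of $\uu_q'$ and $\vv_q'$ have the same exponential growth rate, is precisely the content of Proposition \ref{p28} (relations \eqref{27}--\eqref{28}), whose proof takes most of Section \ref{s2}: deleting the two words $\alpha_1(q)\cdots\alpha_n(q)$ and its reflection from the SFT $\widetilde\vv_{q,n}'$ could a priori lower the entropy, and excluding this needs the Perron--Frobenius bounds of Lemma \ref{l212}, a count of occurrences of those words, and the positivity of the entropy for $q>q'$. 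Countability of $\vv_q'\setminus\uu_q'$ does \emph{not} by itself give equal growth rates, since the languages genuinely differ (cf.\ Lemma \ref{l210}(iii)). Likewise, ``the bases producing such $\alpha(q)$ form a Lebesgue-null set'' is part of what must be proved: the exceptional set is governed by $\overline{\uu}$, whose nullity for general $M$ is Theorem \ref{t16}(i) of this paper and rests on the Borel--Cantelli argument of Theorem \ref{t15}; gesturing at the Erd\H os-type theorem without an argument leaves the ``for almost all $q$, $\uu_q'$ is a subshift'' half of the statement unestablished. (Minor points: for the upper bound you only need the inequality $\dim_H\le h/\log q$, which is immediate from cylinder covers; and the growth rate is always a genuine limit by submultiplicativity, Lemma \ref{l21}, so that particular worry is unfounded.)

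The final step contains a genuine logical error: from ``$D(q)=e(q)/\log q$ for almost every $q$'' with $e$ non-decreasing you cannot conclude that $D$ is differentiable almost everywhere. Almost-everywhere equality does not transfer differentiability, because the difference quotients of $D$ through the exceptional null set are uncontrolled. To repair this you must either upgrade the identity \eqref{11} to hold \emph{everywhere} (which is exactly Theorem \ref{t13}, the paper's main new contribution), or prove monotonicity/bounded variation of $D$ itself (Theorem \ref{t14}), or argue as the paper does in Lemma \ref{l211} and Section \ref{s7}: off the null set $\overline{\uu}$ the entropy $h(\uu_q')$ is locally constant on each connected component of the complement, so $D(q)=c/\log q$ there and $D$ is differentiable at every point of this open co-null set. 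As it stands, your proposal reproduces the known strategy in outline but leaves unproved exactly the approximation and null-set results that constitute the actual work, and its differentiability argument does not go through.
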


We recall from Lind and Marcus \cite{LindMarcus1995} that
\begin{equation}\label{12}
h(\uu_q')=\lim_{n\to\infty}\frac{\log |B_n(\uu_q')|}{n}=\inf_{n\ge 1}\frac{\log |B_n(\uu_q')|}{n}
\end{equation}
when  $\uu_q'$ is a subshift, where $B_n(\uu_q')$ denotes the set of  different initial words of length $n$ occurring in the sequences $(c_i)\in \uu_q'$, and $|B_n(\uu_q')|$ means the cardinality of $B_n(\uu_q')$.
(Unless otherwise stated, in this paper we use base two logarithms.)

We will complete and improve Theorems \ref{t11} and \ref{t12} in  Theorems \ref{t13}, \ref{t14} and \ref{t17} below.

\begin{theorem}\label{t13}\mbox{}
The formula \eqref{11} is valid for \emph{all} $q>1$.
\end{theorem}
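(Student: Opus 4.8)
\emph{Upper bound for every $q>1$.} The set $\uu_q=\pi_q(\uu_q')$ is covered by the $|B_n(\uu_q')|$ intervals $\pi_q([c_1\cdots c_n])$ with $c_1\cdots c_n\in B_n(\uu_q')$, each of length $\le Mq^{-n}/(q-1)$. Since $\uu_q'$ and the subshift $\overline{\uu_q'}$ have the same finite factors, \eqref{12} gives $n^{-1}\log|B_n(\uu_q')|\to h(\uu_q')$; hence the $s$-dimensional Hausdorff measure of $\uu_q$ vanishes for every $s>h(\uu_q')/\log q$, i.e.\ $D(q)\le h(\uu_q')/\log q$. By Theorem~\ref{t12} the reverse inequality holds for a.e.\ $q$, so the whole content of the theorem is to prove $D(q)\ge h(\uu_q')/\log q$ for the remaining bases --- a set of measure zero, but one containing the Komornik--Loreti constant and the other bases at which $\uu_q'$ fails to be a subshift.

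\emph{Reduction to a subshift.} Since $\pi_q(\overline{\uu_q'})=\overline{\uu_q}$ and $\overline{\uu_q}\setminus\uu_q$ is countable, we have $D(q)=\dim_H\pi_q(\overline{\uu_q'})$, and as above $h(\overline{\uu_q'})=h(\uu_q')$; so it suffices to prove $\dim_H\pi_q(\overline{\uu_q'})\ge h(\overline{\uu_q'})/\log q$. From the lexicographic description of $\uu_q'$ in terms of the quasi-greedy expansion $\alpha(q)$ of $1$ (strictly increasing in $q$) one gets $\uu_p'\subseteq\uu_q'$ for $p<q$, so $q\mapsto h(\uu_q')$ is nondecreasing; I would also show it is \emph{left-continuous}, by proving that every word of $B_n(\uu_q')$ extends to a sequence all of whose relevant shifted (and barred) tails stay strictly below $\alpha(p)$ once $p<q$ is close enough to $q$ --- here a generalized version of the Erd\H os et al.\ theorem on long blocks of zeros in $\beta$-expansions supplies the required admissible tails.

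\emph{Lower bound by approximation from below.} Pick, via Theorem~\ref{t12}, bases $p_k\uparrow q$ with $\uu_{p_k}'$ a subshift and $D(p_k)=h(\uu_{p_k}')/\log p_k$. Since $p_k<q$, the inclusion $\uu_{p_k}'\subseteq\uu_q'$ holds with a \emph{uniform} gap: every shifted (or barred) tail occurring in a sequence of $\uu_{p_k}'$ is lexicographically smaller than $\alpha(q)$ already within a fixed number $N_k$ of symbols. Consequently $\pi_q(\uu_{p_k}')\subseteq\uu_q$, whence $D(q)\ge\dim_H\pi_q(\uu_{p_k}')$, and the decisive step is the inequality $\dim_H\pi_q(\uu_{p_k}')\ge h(\uu_{p_k}')/\log q$, obtained by pushing a measure of maximal entropy of the subshift $\uu_{p_k}'$ through $\pi_q$ and invoking the mass distribution principle, with the uniform gap controlling the overlaps of generation-$n$ cylinders. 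Letting $k\to\infty$ and using left-continuity,
\begin{equation*}
D(q)\ \ge\ \lim_{k\to\infty}\frac{h(\uu_{p_k}')}{\log q}\ =\ \frac{h(\uu_q')}{\log q}.
\end{equation*}

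\emph{Main obstacle.} It is precisely the estimate $\dim_H\pi_q(\uu_{p_k}')\ge h(\uu_{p_k}')/\log q$ when $q\in(q',M+1]$: on this range $\pi_q$ is genuinely many-to-one on the full shift, consecutive cylinders overlap, and lexicographic order need not respect the order of $\RR$, so the naive ``a ball of radius $q^{-n}$ meets $O(1)$ generation-$n$ cylinders'' reasoning fails. This is exactly why one cannot stay within Theorem~\ref{t12}: the strengthened Erd\H os et al.\ theorem produces long blocks of zeros inside the admissible sequences, and these turn the uniform \emph{lexicographic} gap into a genuine \emph{metric} gap at a positive-density set of scales, which restores the bounded-overlap estimate, hence the mass-distribution lower bound. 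The left-continuity of $q\mapsto h(\uu_q')$ is a secondary technical point, needed only because $\uu_q'$ itself is not closed.
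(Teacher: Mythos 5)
Your skeleton (upper bound by counting cylinders, lower bound by exhausting $\uu_q'$ from inside by subshifts attached to smaller bases, monotonicity of $\alpha(q)$) is reasonable and partly parallel to the paper, but the two load-bearing steps are exactly the ones you leave unproven, and for one of them the mechanism you propose cannot work. First, the left-continuity of $q\mapsto h(\uu_q')$ is not a ``secondary technical point'': it is, in substance, Proposition \ref{p28} of the paper (the entropies of the finite-type approximations $\widetilde\uu_{q,n}'$ converge to $h(\widetilde\uu_q')$), and proving it is the main combinatorial work. Your sketch --- ``every word of $B_n(\uu_q')$ extends to a sequence whose tails stay strictly below $\alpha(p)$'' --- glosses over precisely the difficulty: words ending in long prefixes of $\alpha(q)$ (or of $\overline{\alpha(q)}$) need not admit such extensions, and one has to count how many words are lost when one passes from $\widetilde\vv_{q,n}'$ to $\widetilde\uu_{q,n}'$. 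The paper does this using the characterization \eqref{22}--\eqref{23} of $q\in\uuu$ (Lemma \ref{l210}) together with Perron--Frobenius estimates for the graphs of the approximating subshifts of finite type, distinguishing the cases $q\in\uuu$ and $q\notin\uuu$; nothing of this sort is present in your proposal.

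Second, the decisive inequality $\dim_H\pi_q(\uu_{p_k}')\ge h(\uu_{p_k}')/\log q$ is asserted but not proved, and the tool you invoke for it is the wrong one. Theorem \ref{t15} is an almost-everywhere statement about the greedy expansion of $x=1$ as the \emph{base} varies, so it gives no information at the exceptional (null) set of bases $q$ you must handle --- which is the whole point of Theorem \ref{t13} --- and it says nothing about the digit strings in $\uu_{p_k}'$. In fact, by the reflected condition in Lemma \ref{l23}, a unique expansion can contain a block $0^j$ (after a nonzero digit) only if $\alpha_1(q)\cdots\alpha_j(q)=M^j$, so for $q$ bounded away from $M+1$ the admissible sequences contain \emph{no} long zero blocks; your claimed ``metric gap at a positive-density set of scales'' has no source. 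The paper obtains the metric separation quite differently: it replaces $\uu_q'$ by $\widetilde\uu_q'$ (Lemma \ref{l25}) and then by the subshifts of finite type $\widetilde\uu_{q,n}'$, shows that $\pi_p(\widetilde\uu_{q,n}')$ is a graph-directed set satisfying the strong separation condition --- Lemma \ref{l32}, where the separation inequality $\pi_p(c)<1+\pi_p(d)$ is forced by choosing $n$ with $q^{n-N}(q-1)>M$, $N$ being the first index with $\alpha_N(q)<M$ --- applies Mauldin--Williams to get $\dim_H\pi_p(\widetilde\uu_{q,n}')=h(\widetilde\uu_{q,n}')/\log p$ exactly, and finally lets $n\to\infty$ using Proposition \ref{p28}. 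Until you either prove your mass-distribution estimate with a correct separation mechanism of this kind, or prove the entropy-approximation statement, the proposal has genuine gaps at both of its critical points.
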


We recall from \cite{DeVriesKomornik2009} that $\uu_q'$ is not always a subshift.
Theorem \ref{t13} states in particular that the limit in \eqref{12} exists even if $\uu_q'$ is not a subshift, and it is equal to the infimum in \eqref{12}.

\begin{theorem}\label{t14}\mbox{}
The function $D$ is continuous, and has a bounded variation.
\end{theorem}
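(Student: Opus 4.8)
The plan is to reduce everything to the entropy function $h(q):=h(\uu_q')$ and to exploit its monotonicity. By Theorem~\ref{t13} we have $D(q)=h(q)/\log q$ for every $q>1$, where (by \eqref{12}, extended to all $q$ via Theorem~\ref{t13})
$h(q)=\lim_{n\to\infty}n^{-1}\log|B_n(\uu_q')|=\inf_{n\ge 1}n^{-1}\log|B_n(\uu_q')|$; in particular $0\le h(q)\le\log(M+1)$, and since $B_n(\uu_q')=B_n(\overline{\uu_q'})$ we also have $h(q)=h(\overline{\uu_q'})$, the topological entropy of the subshift $\overline{\uu_q'}$. By Theorem~\ref{t11}, $D\equiv 0$ on $(1,q']$, so only $q>q'$ matters, and there $\log q\ge\log q'>0$. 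Finally, the lexicographic description of $\uu_q'$ in terms of the quasi-greedy expansion $\alpha(q)=(\alpha_i(q))$ of $1$ in base $q$ (see \cite{DeVriesKomornik2009}, \cite{KomornikLoreti2002}), together with the strict increasingness of $q\mapsto\alpha(q)$, shows that the defining inequalities only get weaker as $q$ grows, whence $\uu_{q_1}'\subseteq\uu_{q_2}'$ for $q_1<q_2$; so $B_n(\uu_{q_1}')\subseteq B_n(\uu_{q_2}')$ for all $n$, and $h$ is non-decreasing.

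\emph{Bounded variation.} A bounded monotone function is of bounded variation, so $h$ has bounded variation on $(1,\infty)$ with total variation at most $\log(M+1)$. On $[q',\infty)$ the function $g(q):=1/\log q$ is positive, decreasing and bounded by $1/\log q'$, hence of bounded variation with total variation $1/\log q'$. Since $D=hg$ on $[q',\infty)$ while $D\equiv 0$ on $(1,q']$, and the product of two bounded functions of bounded variation is again of bounded variation, $D$ has bounded variation on $(1,\infty)$, with total variation at most $2\log(M+1)/\log q'$.

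\emph{Continuity.} As $\log q$ is continuous and bounded away from $0$ on $(q',\infty)$ and $D\equiv 0$ on $(1,q']$, it is enough to prove that $h$ is continuous on $(q',\infty)$ and right-continuous at $q'$ (where $h(q')=0$); since $h$ is monotone, this amounts to excluding jumps. For right-continuity at a point $q_0\ge q'$, pick $q_n\downarrow q_0$; the subshifts $\overline{\uu_{q_n}'}$ decrease, with $Y:=\bigcap_n\overline{\uu_{q_n}'}=\bigcap_{q>q_0}\overline{\uu_q'}\supseteq\overline{\uu_{q_0}'}$. Topological entropy is continuous along such a decreasing sequence of subshifts: for each fixed $\ell$ the finite sets $B_\ell(\overline{\uu_{q_n}'})$ decrease and hence stabilise to $B_\ell(Y)$, so $\lim_n h(q_n)=\inf_\ell\ell^{-1}\log|B_\ell(Y)|=h(Y)$ (indeed $h(q_n)\ge h(Y)$ for all $n$, and given $\eps>0$ one picks $\ell$ with $\ell^{-1}\log|B_\ell(Y)|<h(Y)+\eps$ and then $n$ with $B_\ell(\overline{\uu_{q_n}'})=B_\ell(Y)$, so $h(q_n)\le h(Y)+\eps$). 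It remains to check $h(Y)=h(q_0)$. In fact $Y=\overline{\uu_{q_0}'}$: when $q_0$ is not a simple Parry number, $\alpha(q)\to\alpha(q_0)$ coordinatewise as $q\downarrow q_0$ and the defining inequalities pass to the limit; at the countably many simple Parry bases, $\lim_{q\downarrow q_0}\alpha(q)$ is instead the finite greedy expansion of $1$ in base $q_0$, so $Y$ is a ``greedy-admissible'' closed shift space, which by the classical argument coincides with the quasi-greedy one. Hence $h$ is right-continuous.

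\emph{Left-continuity is the main obstacle.} For $q_0>q'$ we must show $\lim_{q\uparrow q_0}h(q)\ge h(q_0)$. The idea is to exhaust $\uu_{q_0}'$ from inside by the subshifts of finite type $X_N$ ($N\ge1$) obtained by imposing the inequalities defining $\uu_{q_0}'$ only on windows of length $N$ --- i.e., for $c_n<M$, $(c_{n+1},\dots,c_{n+N})\prec(\alpha_1(q_0),\dots,\alpha_N(q_0))$, together with the reflected condition --- so that $X_N\subseteq X_{N+1}\subseteq\uu_{q_0}'$. By the left-continuity of $\alpha$, for every $N$ there is $q_1<q_0$ with $\alpha_i(q_1)=\alpha_i(q_0)$ for $i\le N$, and for such $q_1$ one has $X_N\subseteq\uu_{q_1}'$, hence $\lim_{q\uparrow q_0}h(q)\ge h(q_1)\ge h(X_N)$ for every $N$, so $\lim_{q\uparrow q_0}h(q)\ge\lim_N h(X_N)$. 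Everything thus reduces to proving that $h(X_N)\to h(\uu_{q_0}')$ as $N\to\infty$. This is the delicate point: topological entropy need \emph{not} be continuous along an increasing exhaustion of a subshift by subshifts of finite type (the full shift is such a union of proper subsystems), so here one must use the combinatorial fine structure of univoque sequences. The key input is the strengthened and generalised version of the theorem of Erd\H{o}s et al.\ on the occurrence of long blocks of zeros in $\beta$-expansions, proved in this paper: it lets one concatenate $\uu_{q_0}'$-admissible words of arbitrary length through a bounded ``buffer'' block that is already $X_N$-admissible, which gives $|B_\ell(X_N)|\ge c_N^{-1}|B_\ell(\uu_{q_0}')|$ with $c_N$ independent of $\ell$, hence $h(X_N)\to h(\uu_{q_0}')$. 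Carrying out this gluing estimate is the heart of the proof; the rest is the bookkeeping above.
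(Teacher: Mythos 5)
Your reductions are fine as far as they go: monotonicity of $h(q)=h(\uu_q')$, the bounded-variation argument (same in spirit as the paper's Lemma \ref{l43}), and the soft facts that entropy passes to decreasing intersections of subshifts and that $X_N\subseteq\uu_{q_1}'$ for suitable $q_1<q_0$ (this inclusion is exactly how the paper's Lemma \ref{l41} starts). But both of your continuity claims are then made to rest on two limit statements that are precisely the hard core of the paper, namely Proposition \ref{p28}, and your justifications for them do not hold up. For right-continuity, the identity $Y=\overline{\uu_{q_0}'}$ is not established by ``the defining inequalities pass to the limit'': the inequalities defining $\uu_q'$ are strict, and in the limit $q\downarrow q_0$ they become the \emph{weak} inequalities with respect to $\beta(q_0)$ (since $\alpha(q)\downarrow\beta(q_0)$ and $\alpha(q)>\beta(q_0)$ for $q>q_0$). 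Thus $Y$ is essentially the paper's $\bigcap_n\widetilde\ww_{q_0,n}'$, a subshift a priori strictly larger than $\overline{\uu_{q_0}'}$, and the whole difficulty is to show it carries no extra entropy; this is exactly the content of \eqref{27} and \eqref{28}, proved via Lemma \ref{l210} and the Perron--Frobenius counting of Lemma \ref{l212} (plus Lemma \ref{l211} off $\uuu$). Your phrase about the ``classical argument'' addresses only the greedy/quasi-greedy discrepancy, not the strict-versus-weak issue, so the step $h(Y)=h(q_0)$ is simply missing.

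The left-continuity step has the same gap in a sharper form. You acknowledge that $h(X_N)\to h(\uu_{q_0}')$ is the delicate point, but the estimate you propose, $|B_\ell(X_N)|\ge c_N^{-1}|B_\ell(\uu_{q_0}')|$ with $c_N$ independent of $\ell$, cannot be correct in general: it would give $h(X_N)=h(\uu_{q_0}')$ for each fixed large $N$, hence (via $X_N\subseteq\uu_{q_1}'$) that $h$ is locally constant to the left of \emph{every} $q_0$, which is impossible for a continuous, non-decreasing, non-constant function on $[q',M+1]$; so the claimed gluing bound must fail exactly at those $q_0$ where continuity is actually at stake. Moreover, Theorem \ref{t15} is not a usable tool here: it is an almost-every-$q$ statement about the map $q\mapsto\beta(q)$, proved in Section \ref{s6} which is independent of the rest of the paper, and it says nothing about concatenating admissible words inside the fixed symbolic space $\uu_{q_0}'$. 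The paper instead proves the needed approximation by comparing $\widetilde\uu_{q,n}'$ with $\widetilde\vv_{q,n}'$ and $\widetilde\ww_{q,n}'$: Lemma \ref{l210} shows that at the special indices $n$ from \eqref{23} the strict and weak block sets differ by only two words, and the Perron--Frobenius estimate bounds the words of $\widetilde\vv_{q,n}'$ containing the block $\alpha_1(q)\cdots\alpha_n(q)$ or its reflection, giving $h(\widetilde\vv_{q,n}')\le h(\widetilde\uu_{q,n}')+\log(1+2c_2\lambda_N^{-n})$. Without an argument of this kind (or a genuine substitute), your proof of continuity is incomplete on both sides.
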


Theorem \ref{t14} implies again that $D$ is differentiable almost everywhere. 
In order to describe its derivative first we establish some results on general $\beta$-expansions and on univoque bases.

Following Rényi \cite{Renyi1957} we denote by $\beta(q)=(\beta_i(q))$ the lexicographically largest expansion of $x=1$ in base $q$.
It is also called the greedy or $\beta$-expansion of $x=1$ in base $q$. 

\begin{theorem}\label{t15}
Fix  $1<r\le M+1$ arbitrarily. 
For almost all $q\in (1,r)$ there exist arbitrarily large integers $m$ such that $\beta_1(q)\cdots\beta_m(q)$ ends with more than $\log_rm$ consecutive zero digits.
\end{theorem}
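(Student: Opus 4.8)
The plan is to prove this via a Borel--Cantelli argument applied to a cleverly chosen family of ``bad'' sets. For a base $q$ and an integer $m$, let $E_m(q)$ be the event that $\beta_1(q)\cdots\beta_m(q)$ ends with more than $\log_r m$ consecutive zeros; I want to show that for almost every $q\in(1,r)$ the event $E_m(q)$ holds for infinitely many $m$. By the second Borel--Cantelli-type heuristic it would suffice to find, inside each dyadic (or geometric) scale, enough $q$'s with a long block of zeros near a prescribed position and to control the overlaps. The cleaner route, and the one I would pursue, is to fix a rapidly growing sequence of indices $m_k$ (say $m_k = 2^k$, so that $\log_r m_k \asymp k/\log r$) and to estimate, for each $k$, the Lebesgue measure of
\[
G_k := \set{q\in(1,r) : \beta_1(q)\cdots\beta_{m_k}(q)\text{ ends with }\ge \lceil\log_r m_k\rceil\text{ zeros}}.
\]
If I can show $\sum_k |G_k| = \infty$ together with a quasi-independence estimate $|G_j\cap G_k|\lesssim |G_j||G_k|$ for $j\neq k$, then the Erdős--Rényi / Chung--Erdős form of Borel--Cantelli gives that a.e.\ $q$ lies in infinitely many $G_k$, which is exactly the claim.

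The key combinatorial input is a lower bound on $|G_k|$. Here I would use the structure of the greedy expansion map $q\mapsto\beta(q)$: it is known (Parry, Rényi) that $\beta(q)$ is increasing in $q$ in the lexicographic order, that the admissible sequences are characterized by $\sigma^n\beta(q)\preceq\beta(q)$ for all $n$, and that the map is, up to a countable set, a bijection onto the admissible sequences. Given a finite admissible word $w=w_1\cdots w_{m}$, the set of bases $q$ whose greedy expansion begins with $w$ is an interval (or half-open interval) $I_w$ whose length is comparable to $q^{-m}$, more precisely $|I_w|\asymp (M+1)^{?}\,q^{-m}$ with the implied constants depending only on $M$ and on how close $q$ is to $1$ and $r$; since we work on a fixed compact subinterval away from $1$, these constants are uniform. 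Now among the words of length $m_k$ that are admissible for bases in a fixed subinterval of $(1,r)$, a positive proportion — in fact all but an exponentially small fraction, by a counting argument on admissible words — can be modified on their last $\lceil\log_r m_k\rceil$ coordinates to end in zeros while remaining admissible, because appending zeros can only make the lexicographic admissibility condition easier to satisfy. Summing $|I_w|$ over this family of words of length $m_k$ ending in the required block of zeros yields $|G_k|\gtrsim r^{-\lceil\log_r m_k\rceil}\asymp m_k^{-1}$, hence $\sum_k|G_k|\gtrsim\sum_k m_k^{-1}\cdot(\text{number of scales})$; choosing $m_k=2^k$ already gives $\sum_k 2^{-k}$, which converges, so I would instead take $m_k$ to grow only polynomially, e.g.\ $m_k=k$, making $\log_r m_k\asymp\log_r k$ and $|G_k|\gtrsim k^{-1}$, whence $\sum_k|G_k|=\infty$ as needed. (One must be slightly careful: requiring \emph{exactly} $m_k$-long prefixes forces the blocks at different scales to be essentially disjoint stretches of the expansion, which is precisely what powers the quasi-independence estimate.)

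The quasi-independence step is where I expect the real work to lie, and it is the main obstacle. For $j<k$, the event $G_j$ constrains coordinates near position $m_j$ and $G_k$ constrains coordinates near position $m_k$; because these coordinate ranges are disjoint (for $m_k$ sufficiently larger than $m_j$), one expects $|G_j\cap G_k|\approx|G_j|\,|G_k|$. Making this rigorous requires understanding, given that $\beta_1(q)\cdots\beta_{m_j}(q)$ ends in a long zero block, how much freedom remains in the tail $\beta_{m_j+1}(q),\beta_{m_j+2}(q),\ldots$: the admissibility constraint $\sigma^n\beta\preceq\beta$ couples the tail to the already-fixed prefix, so the tail is not literally free. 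The resolution is that a long block of zeros in the prefix \emph{relaxes} the constraint on the immediately following coordinates (the shifted sequences starting inside the zero block are lexicographically small, hence automatically $\preceq\beta$), so that a genuinely positive-measure sub-family of tails is available, and within it the sub-event ``$\beta$ has another long zero block ending at position $m_k$'' again has conditional measure $\gtrsim m_k^{-1}$. Assembling this uniformly in $j,k$ gives the Chung--Erdős hypothesis. I would also need to handle the passage from a fixed compact subinterval $[1+\delta, r-\delta]$ to all of $(1,r)$, which is routine since the claim is about almost every $q$ and $(1,r)$ is an increasing union of such compacta, and to deal separately with the trivial case $r=M+1$ where near $q=M+1$ the expansions are eventually periodic only on a null set. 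Finally, Theorem~\ref{t15} as stated asks for ``arbitrarily large $m$'' with the zero block at the \emph{end} of the prefix $\beta_1(q)\cdots\beta_m(q)$; since the block produced at scale $k$ ends at a definite position $m_k\to\infty$, reading off the prefix of that exact length $m=m_k$ delivers the statement verbatim.
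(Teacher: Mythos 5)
Your overall strategy (Borel--Cantelli over a sequence of scales, using a measure lower bound of order $r^{-t}$ for the event of a zero block of length $t$) points in the right direction, but as written it has two genuine gaps. First, the Chung--Erd\H os/Kochen--Stone form of the second Borel--Cantelli lemma that you invoke does \emph{not} give ``almost every $q$''; under $\sum_k\lambda(G_k)=\infty$ and a quasi-independence bound it only gives that $\limsup G_k$ has \emph{positive} measure (bounded below by the reciprocal of the correlation constant). Upgrading to full measure requires an extra step (e.g.\ running the estimate on every subinterval with a uniform proportional constant and then applying the Lebesgue density theorem), which you never supply. This is not cosmetic: with your choice $m_k=k$ the zero-block windows at nearby indices overlap, so $\lambda(G_j\cap G_k)$ is of order $1/\max(j,k)$ rather than $1/(jk)$ for $|j-k|$ small, the correlation constant is strictly larger than $1$, and positive measure is all the inequality can ever give. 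Second, the quasi-independence estimate itself --- which you correctly identify as the heart of the matter --- is left as a heuristic; and your lower bound $\lambda(G_k)\gtrsim m_k^{-1}$ rests on the claim that parameter cylinders satisfy $|I_w|\asymp q^{-m}$, which is false in general: the intervals $\set{q:\beta_1(q)\cdots\beta_m(q)=w}$ have highly non-uniform lengths, so ``a positive proportion of admissible words'' does not translate into a positive proportion of measure.

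Both difficulties disappear if you replace the global counting bound by a \emph{conditional} one, uniform over all cylinders, which is exactly what the paper does. For any prefix $\eta_1\cdots\eta_n\in B_n(\bb')$ determining a parameter interval $[q_1,q_2)$, the sub-interval $[q_1,q_3)$ of bases whose expansion continues with $t$ zeros satisfies $(q_3-q_1)/(q_2-q_1)\ge (q_1-1)^3/(M^2q_2^{t+2})$ (Lemma \ref{l61}); crucially the bound is independent of $n$ and of the prefix. Summing over a cylinder partition of $[p,r)$ gives Lemma \ref{l62}, and, applied to \emph{disjoint consecutive} digit windows of lengths $n_1,n_2,\ldots$ chosen so that $n_k>\log_r(n_1+\cdots+n_k)$ while $\sum_k r^{-n_k}=\infty$ (Lemma \ref{l63}), it shows that the set of $q$ for which no window beyond the $k$-th is entirely zero has measure at most $\prod_{j\ge k}\bigl(1-c\,r^{-n_j}\bigr)=0$. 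Thus no second Borel--Cantelli lemma, no quasi-independence estimate, and no density upgrade are needed: the uniform conditional estimate lets the complementary probabilities multiply directly, and full measure follows at once, with $m=n_1+\cdots+n_k$ ending in more than $\log_r m$ zeros for infinitely many $k$. I would encourage you to recast your argument along these conditional lines; the ingredients you have sketched (the $r^{-t}$ heuristic and the choice of sparse scales) are then exactly what is required.
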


This theorem improves and generalizes \cite[Theorem 2]{ErdosJooKomornik1990} concerning the case $M=1$. 
In particular, our result implies that $\beta(q)$ contains arbitrarily large blocks of consecutive zeros for almost all $q\in(1,M+1]$. 
This was first established by Erdős and Joó \cite{ErdosJoo1991} for $M=1$, and their result was extended by Schmeling \cite{Schmeling1997} for all $M$. 

Next we denote by $\uu$ the set of bases $q>1$ in which $x=1$ has a unique expansion, and by $\uuu$ its closure. 
The elements of $\uu$ are usually called \emph{univoque bases}. 

\begin{theorem}\label{t16}\mbox{}

\begin{enumerate}[\upshape (i)]
\item $\uu$ and $\uuu$ are (Lebesgue) null sets.
\item $\uu$ and $\uuu$  have Hausdorff dimension one.
\end{enumerate}
\end{theorem}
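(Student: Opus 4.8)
The plan is to prove the two parts by quite different means, using the combinatorial description of $\uu$ in terms of the greedy expansion $\beta(q)$. Recall that $q\in\uu$ iff the quasi-greedy expansion $\alpha(q)$ of $1$ satisfies the (strict) lexicographic self-admissibility conditions
\[
\overline{\alpha(q)_{k+1}\alpha(q)_{k+2}\cdots}\prec\alpha(q)_1\alpha(q)_2\cdots\prec\alpha(q)\quad\text{for all }k\ge 1,
\]
(with the usual bar meaning $c\mapsto M-c$), and that $\uuu$ differs from $\uu$ by a countable set, so the measure and dimension statements for the two sets are equivalent. For part (i), I would show $\uu$ is null by exhibiting, for almost every $q$, a violation of the left inequality above: this is exactly where Theorem \ref{t15} (applied with $r=M+1$, or with $r$ slightly below $M+1$ and then exhausting $(1,M+1]$) enters. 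Indeed, if $\beta(q)$ — and hence, after the standard adjustment, $\alpha(q)$ — contains arbitrarily long blocks of consecutive zeros, then for a suitable tail position $k$ the reflected tail $\overline{\alpha(q)_{k+1}\alpha(q)_{k+2}\cdots}$ begins with a long block of digits $M$, which forces it to exceed $\alpha(q)_1\alpha(q)_2\cdots$ lexicographically unless $\alpha(q)$ itself begins with an equally long block of $M$'s; the latter happens only for $q$ very close to $M+1$, and a short separate argument (or monotonicity of $\alpha$) rules out all but countably many such $q$. Hence $q\notin\overline{\uu}$ for a.e.\ $q$, giving $|\uuu|=|\uu|=0$.

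For part (ii), the strategy is to build, inside $\uu$, a Cantor-type subset of bases whose Hausdorff dimension is arbitrarily close to $1$; since $\uu\subseteq(1,M+1]$ and $\dim_H$ is monotone, this yields $\dim_H\uu=\dim_H\uuu=1$. The construction is combinatorial: fix a large $N$ and consider all sequences obtained by freely concatenating two carefully chosen finite words $w_0,w_1$ of length $N$ (for instance built from a long run near the admissible "doubling" pattern associated with the Komornik--Loreti constant) in such a way that \emph{every} resulting infinite sequence $\alpha$ satisfies the univoque admissibility inequalities strictly, and therefore is $\alpha(q)$ for a genuine $q\in\uu$. The map $\alpha\mapsto q=q(\alpha)$ (inverse of $q\mapsto\alpha(q)$) is bi-Lipschitz-like on compact pieces bounded away from $1$, more precisely Hölder with exponents controlled by $\log q$, so the image in $(1,M+1]$ carries a self-similar Cantor set whose similarity dimension is $\tfrac{1}{N}\log_2(\text{number of admissible blocks})$ up to the logarithmic distortion from the base change. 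Choosing the blocks so that the number of admissible length-$N$ words grows like $(M+1)^{N(1-o(1))}$, and letting $N\to\infty$, pushes this dimension to $1$.

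The main obstacle is the second part: one must produce, for each $\eps>0$, an explicit family of length-$N$ words with roughly $(M+1)^{N(1-\eps)}$ members such that \emph{arbitrary} infinite concatenations satisfy both univoque inequalities, not merely one of them, and simultaneously control the distortion of $\alpha\mapsto q(\alpha)$ so that the entropy of the symbolic system transfers (via \eqref{12}-type estimates) to Hausdorff dimension of the base set without loss. Getting the admissibility to hold for all concatenations forces the blocks to start and end with enough "buffer" digits; the delicate point is checking that these buffers can be made of length $o(N)$, so that the free part of the block still has $\approx(M+1)^{N(1-o(1))}$ choices. I expect the buffer to be built from a prefix of the Thue--Morse-type sequence defining $q'$, whose self-similarity under the shift is exactly what guarantees that no concatenation ever violates lexicographic admissibility; verifying this uniformly is the technical heart of the proof.
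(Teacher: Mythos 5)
Your part (i) is essentially the paper's argument and is fine: almost every $q\in(1,M+1)$ has arbitrarily long blocks of zeros in $\beta(q)$ (Theorem \ref{t15}, or more precisely the null set $C$ built in its proof), while for a univoque $q<M+1$ the characterization \eqref{51} forces $\beta_1(q)\cdots\beta_t(q)=M^t$ whenever a zero block of length $t$ occurs in the tail, so the zero blocks of any fixed $q\in\uu$ with $q<M+1$ are bounded; no separate treatment of bases ``close to $M+1$'' is needed beyond discarding the single point $M+1$, and $\uuu\setminus\uu$ countable finishes the claim.

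The genuine gap is in part (ii), where what you give is a plan rather than a proof: the two statements your argument rests on --- the existence, for each $N$, of roughly $(M+1)^{N(1-o(1))}$ words of length $N$ \emph{all} of whose free concatenations satisfy both strict inequalities in \eqref{51}, and the ``bi-Lipschitz-like/H\"older'' behaviour of the map from admissible sequences back to bases --- are exactly the technical heart, and you explicitly defer both (``I expect the buffer to be built from a prefix of the Thue--Morse sequence\dots; verifying this uniformly is the technical heart''). Neither is supplied, and the Thue--Morse buffer idea is more delicate than necessary: in the paper concatenation-freeness is not obtained by decorating every block, but by one dominant prefix. The set $\hat\uu_N'$ consists of the sequences beginning with $M^{2N-1}0$ whose later aligned blocks $c_{kN+1}\cdots c_{kN+N}$ ($k\ge2$) are strictly between $0^N$ and $M^N$; since every window of $2N-1$ consecutive positions in the free part contains a complete aligned block, every shifted tail contains a digit $<M$ and a digit $>0$ within its first $2N-1$ places, so both inequalities of \eqref{51} follow by comparison with the prefix $M^{2N-1}0$ and its reflection, with no per-block buffer and hence $(M+1)^N-2$ choices per block. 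The second missing step, passing from block counting to $\dim_H$ of the set of \emph{bases}, is not handled in the paper by a distortion estimate for $\alpha\mapsto q(\alpha)$ (which you assert without proof), but by the one-sided separation statement of Lemma \ref{l51}: if $p,q\in\hat\uu_N$ and $0<q-p\le c(M+1)^{-m}$, then $\beta_i(p)=\beta_i(q)$ for $i\le m$ --- an easy consequence of the greedy algorithm --- after which a direct covering argument yields $\dim_H\hat\uu_N\ge\sigma(N)$ with $\sigma(N)$ defined by \eqref{54}, and $\sigma(N)\to1$. Until you exhibit an explicit family of blocks with the universal concatenation property and prove a quantitative digit-separation estimate of this kind, part (ii) is not established.
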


Parts (i) and (ii) were proved for $\uu$ in case $M=1$ by Erdős and Joó \cite{ErdosJoo1991} and by Daróczy and Kátai \cite{DaroczyKatai1993}, respectively.
The case of $\uuu$ hence follows because the set $\uuu\setminus\uu$ is countable (see \cite{KomornikLoreti2007}). 
Our proof of (ii) is shorter than the original one even for $M=1$.

Finally, combining Theorems  \ref{t11}, \ref{t13}, \ref{t14}, \ref{t16} (i) and  some topological results of \cite{DeVriesKomornik2009} we prove that the dimension function is a  natural variant of \emph{Devil's staircase}:

\begin{theorem}\label{t17}
\mbox{}
\begin{enumerate}[\upshape (i)]
\item $D$ is continuous in $[q',\infty)$.
\item $D'<0$ almost everywhere in $(q',\infty)$.
\item $D(q')<D(q)$ for all $q>q'$.
\end{enumerate}
\end{theorem}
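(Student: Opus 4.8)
The plan is to assemble the three parts from the already-stated results, the only genuinely new work being part (ii). For part (i): Theorem \ref{t11} gives $D\equiv 0$ on $(1,q']$, so $D$ is continuous on $(1,q']$; Theorem \ref{t14} gives continuity on $(1,\infty)$; hence $D$ is continuous on $[q',\infty)$, and in fact on all of $(1,\infty)$. (I would phrase (i) this way precisely because the interesting content is that $D$ glues continuously to the constant $0$ at the left endpoint $q'$, which is immediate from $D(q')=0$ and Theorem \ref{t14}.) For part (iii): by Theorem \ref{t11}, $D(q')=0$ while $D(q)>0$ for every $q>q'$, so $D(q')<D(q)$ trivially. Thus (i) and (iii) are essentially restatements of earlier theorems and require only a sentence each.

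The heart of the matter is part (ii), that $D'<0$ a.e. in $(q',\infty)$. First I would record that by Theorem \ref{t14} the function $D$ has bounded variation, hence is differentiable a.e.\ and its derivative $D'$ is integrable; moreover a BV function is the sum of its (a.e.\ defined) derivative's integral and a part carried by a set of measure zero, but crucially a \emph{monotone} BV behavior is \emph{not} assumed. The strategy is: (a) show $D'\le 0$ a.e., i.e.\ that $D$ is ``essentially non-increasing'' in the sense appropriate here, and (b) show that $D'$ cannot vanish on a set of positive measure, so that combining with (a) gives $D'<0$ a.e. For step (a), the natural route is the formula $D(q)=h(\uu_q')/\log q$ from Theorem \ref{t13}, valid for all $q$. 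The numerator $h(\uu_q')$ is non-decreasing in $q$ (larger base means a lexicographically larger quasi-greedy expansion of $1$, hence a larger set of admissible univoque sequences, hence larger entropy — this monotonicity of $q\mapsto h(\uu_q')$ is standard and can be cited from \cite{DeVriesKomornik2009} or proved in a line from the lexicographic characterization), while the denominator $\log q$ is strictly increasing. So $D$ is a ratio of a non-decreasing function to a strictly increasing one; this does not by itself force $D'\le 0$, so here one uses the finer input: for a.e.\ $q$, Theorem \ref{t15} furnishes arbitrarily large blocks of zeros in $\beta(q)$, which forces $h(\uu_q')$ to grow so slowly (sublinearly in the relevant parameter) that the logarithmic growth of the denominator wins. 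Quantitatively, the blocks-of-zeros estimate of Theorem \ref{t15} should yield, for a.e.\ $q$, that $h(\uu_q')$ is locally Lipschitz with a small enough constant — or better, one differentiates \eqref{11} where both $h(\uu_q')$ and $\log q$ are differentiable and shows the sign is negative using that the increase of $h$ over an interval $[q_1,q_2]$ is controlled (via the zero-block structure) by something of smaller order than $h(q_1)(\log q_2-\log q_1)/\log q_1$.

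For step (b), that $\{q>q':D'(q)=0\}$ is null, I would argue by contradiction. If $D'=0$ on a set $E$ of positive measure, then on a density point $q_0\in E$ the function $D$ is ``infinitesimally flat,'' which via $D=h/\log q$ forces $h(\uu_{q}')$ to have derivative exactly $h(\uu_{q_0}')/(q_0\log q_0)>0$ at $q_0$ (since $D(q_0)>0$ by Theorem \ref{t11} as $q_0>q'$); but the zero-block phenomenon of Theorem \ref{t15}, holding for a.e.\ $q$, contradicts such a strictly positive lower bound on the growth rate of $h$ at a.e.\ point. The main obstacle, and the place where care is needed, is making the link between ``large blocks of zeros in $\beta(q)$'' (Theorem \ref{t15}) and ``$h(\uu_q')$ grows strictly slower than $\log q$'' rigorous and uniform enough to apply at a.e.\ point simultaneously; this requires relating the admissible univoque words $B_n(\uu_q')$ to the combinatorics of $\beta(q)$ (the lexicographic characterization of $\uu_q'$ in terms of $\beta(q)$ and its reflection) and showing that long zero blocks in $\beta(q)$ cap $|B_n(\uu_q')|$ from above by a factor that is subexponential relative to the change in $\log q$ over a shrinking interval around $q$. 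Once that comparison is in hand, parts (i)–(iii) follow by the short arguments above, and the Devil's-staircase conclusion — $D$ continuous, strictly exceeding $D(q')$ everywhere to the right, yet with a.e.\ negative derivative — is exactly the assertion of the theorem.
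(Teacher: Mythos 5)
Parts (i) and (iii) of your plan are correct and are exactly the paper's argument: continuity is Theorem \ref{t14}, and $D(q')=0<D(q)$ for $q>q'$ is Theorem \ref{t11}; nothing more is needed there.

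For part (ii) there is a genuine gap. Both of your steps (a) and (b) rest on an unproved bridge: that the zero--block statement of Theorem \ref{t15} for the digit sequence $\beta(q)$ at (almost every) \emph{fixed} $q$ controls how fast the function $q\mapsto h(\uu_q')$ can \emph{vary with} $q$ (``grows strictly slower than $\log q$'', ``caps $|B_n(\uu_q')|$ over a shrinking interval''). No argument is offered for this, and it is not the mechanism that works: Theorem \ref{t15} says nothing directly about the $q$-dependence of the entropy, and turning long zero blocks of $\beta(q)$ into a local Lipschitz or growth bound for $h(\uu_\cdot')$ is precisely the hard step you leave open. The paper's route is different and much more rigid: the entropy is locally \emph{constant} off a null set. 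Concretely, $\uuu$ is a Lebesgue null set (Theorem \ref{t16}\,(i) --- this is where Theorem \ref{t15} actually enters, indirectly, via Section \ref{s6}), and on each connected component $I=(q_0,q_0^*)$ of $(q',\infty)\setminus\uuu$ one has $h(\uu_q')=h(\uu_p')$ for all $q\in I$ and any fixed $p\in I$ (Lemma \ref{l211}, which relies on the plateau structure of $q\mapsto\uu_q'$ from \cite{DeVriesKomornik2009} and the entropy identities of \cite{KongLi2014}). Then Theorem \ref{t13} gives, on all of $I$,
\begin{equation*}
D(q)=\frac{h(\uu_p')}{\log q},\qquad\text{hence}\qquad D'(q)=-\frac{h(\uu_p')}{q(\log q)^2}<0,
\end{equation*}
since $h(\uu_p')>0$ for $p>q'$ by Theorem \ref{t11}; as $\uuu$ is null this yields $D'<0$ a.e.\ in $(q',\infty)$. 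Without this local-constancy input (or a genuine quantitative substitute for your claimed link between Theorem \ref{t15} and the modulus of increase of $h$), your step (a) does not establish $D'\le 0$ a.e., and the contradiction claimed in step (b) is not substantiated; also note that your monotonicity-of-$h$ observation and the BV/differentiability preliminaries, while true, do not close this gap.
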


\begin{remark}\label{r12}\mbox{}
Compared to the classical Cantor--Lebesgue function, we have even $D'<0$ instead of $D'=0$ almost everywhere.
\end{remark}

The paper is organized as follows.
In Section \ref{s2} we investigate the topological entropy of various subshifts that we need in the sequel.
In Section \ref{s3} we prove Theorem \ref{t13} and we prepare the proof of Theorem \ref{t14}.
Theorem \ref{t14} is proved in Section \ref{s4}, Theorems \ref{t15}--\ref{t16} in Sections \ref{s5}--\ref{s6}, and Theorem \ref{t17} in Section \ref{s7}.
Sections \ref{s5}--\ref{s6} are independent of each other and of the other sections of the paper.

\section{Topological entropies}\label{s2}

We begin by proving that the topological entropy of $\uu_q'$ is well defined even if $\uu_q'$ is not a subshift:

\begin{lemma}\label{l21}
The limit
\begin{equation*}
h(\uu_q'):=\lim_{n\to\infty}\frac{\log |B_n(\uu_q')|}{n}
\end{equation*}
exists for each $q>1$, and is equal to
\begin{equation*}
\inf_{n\ge 1}\frac{\log |B_n(\uu_q')|}{n}.
\end{equation*}
\end{lemma}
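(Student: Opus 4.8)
The natural strategy is to invoke Fekete's subadditivity lemma, which states that for any subadditive sequence $(a_n)$ — meaning $a_{m+n}\le a_m+a_n$ — the limit $\lim_{n\to\infty} a_n/n$ exists and equals $\inf_{n\ge1} a_n/n$. So the plan is to set $a_n:=\log|B_n(\uu_q')|$ and verify subadditivity, i.e. that $|B_{m+n}(\uu_q')|\le|B_m(\uu_q')|\cdot|B_n(\uu_q')|$ for all $m,n\ge1$. This would immediately give both the existence of the limit and its equality with the infimum. The one genuine point to check is that $a_n$ is finite and nonnegative, which is clear since $1\le|B_n(\uu_q')|\le(M+1)^n$ (the set of words of length $n$ over the alphabet is finite), so $0\le a_n\le n\log(M+1)$.

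The substantive step is the submultiplicativity $|B_{m+n}(\uu_q')|\le|B_m(\uu_q')|\cdot|B_n(\uu_q')|$. The obvious map sends a word $w\in B_{m+n}(\uu_q')$ to the pair $(w_1\cdots w_m,\ w_{m+1}\cdots w_{m+n})$; since $w$ occurs as an initial block of some $(c_i)\in\uu_q'$, its prefix $w_1\cdots w_m$ lies in $B_m(\uu_q')$, and one needs the suffix $w_{m+1}\cdots w_{m+n}$ to lie in $B_n(\uu_q')$, i.e. to be itself a prefix of some unique expansion. This is exactly the property that would be automatic if $\uu_q'$ were a subshift (closed under the shift), but the point of the lemma is that $\uu_q'$ need \emph{not} be shift-invariant. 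So the map into $B_m\times B_n$ need not be well-defined as stated, and this is where the real work lies: one must instead use the known combinatorial description of $\uu_q'$ from \cite{DeVriesKomornik2009}. Concretely, a sequence $(c_i)$ belongs to $\uu_q'$ iff every tail $c_{n+1}c_{n+2}\cdots$ is lexicographically "trapped" between $\beta(q)$-type bounds whenever $c_n$ is not extremal; the key feature is that the admissibility of a block $w_{m+1}\cdots w_{m+n}$ as a prefix of \emph{some} element of $\uu_q'$ depends only on a fixed finite condition, and one can show that a shifted tail of an element of $\uu_q'$, while possibly not lying in $\uu_q'$, still has all its finite prefixes in $\bigcup_k B_k(\uu_q')$ — in fact in $B_n(\uu_q')$.

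I expect the main obstacle to be precisely this subtlety: establishing that every length-$n$ factor (not just every prefix) of a sequence in $\uu_q'$ belongs to $B_n(\uu_q')$, despite $\uu_q'$ not being a subshift. The cleanest route is probably to recall that the closure $\uuuq'$ (or the associated two-sided subshift) \emph{is} shift-invariant and that $B_n(\uu_q')=B_n(\uuuq')$ for all $n$ because $\uuuq'\setminus\uu_q'$ is countable and contributes no new finite words (or, more carefully, because the extra sequences in the closure are limits of sequences in $\uu_q'$ and hence introduce no new finite prefixes). Granting $B_n(\uu_q')=B_n(\uuuq')$ and shift-invariance of $\uuuq'$, the submultiplicativity follows from the standard argument, and Fekete's lemma finishes the proof. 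If one prefers to avoid the closure, an alternative is to cite directly from \cite{DeVriesKomornik2009} the characterization of the set of admissible words $\bigcup_n B_n(\uu_q')$ as the language of a (possibly non-sofic) subshift, for which submultiplicativity of the factor-counting function is built in.
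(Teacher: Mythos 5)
Your overall skeleton --- set $a_n=\log|B_n(\uu_q')|$, prove that $n\mapsto|B_n(\uu_q')|$ is submultiplicative, and apply Fekete's lemma --- is exactly the paper's argument. But you go astray at the point you single out as ``the real work.'' You assert that $\uu_q'$ need not be shift-invariant, so that the suffix $w_{m+1}\cdots w_{m+n}$ of a word $w\in B_{m+n}(\uu_q')$ might not lie in $B_n(\uu_q')$. In fact $\uu_q'$ \emph{is} always invariant under the one-sided shift: if $(c_i)\in\uu_q'$ and the tail $(c_{m+i})$ had a second expansion $(d_i)$ of the same value, then $c_1\cdots c_m d_1d_2\cdots$ would be a second expansion of $\pi_q(c)$, contradicting uniqueness; equivalently, shift-invariance is immediate from the lexicographic characterization in Lemma \ref{l23}, whose conditions are imposed on all tails. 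When $\uu_q'$ fails to be a subshift, what fails is \emph{closedness}, not shift-invariance. Hence the splitting map $w\mapsto(w_1\cdots w_m,\ w_{m+1}\cdots w_{m+n})$ does land in $B_m(\uu_q')\times B_n(\uu_q')$, and submultiplicativity follows at once; this single observation is the entire content of the paper's proof.

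Because you missed this, your substitute argument has a genuine gap. The inference that the closure contributes no new words ``because $\overline{\uu_q'}\setminus\uu_q'$ is countable'' is not valid (countably many extra sequences could a priori contribute new finite words); your parenthetical limit argument does correctly give $B_n(\overline{\uu_q'})=B_n(\uu_q')$, but you then need $\overline{\uu_q'}$ to be shift-invariant, which you merely assert --- and the natural proof of that is $\sigma(\uu_q')\subseteq\uu_q'$ combined with continuity of the shift, i.e.\ precisely the fact you doubted. So, as written, the key step (every length-$n$ factor of an element of $\uu_q'$ lies in $B_n(\uu_q')$) is not established. Once you insert the one-line shift-invariance argument above, the detour through the closure becomes unnecessary and your proof collapses to the paper's.
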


\begin{proof}
It suffices to show that the function $n\mapsto |B_n(\uu_q')|$ is submultiplicative, i.e.,
\begin{equation*}
|B_{m+n}(\uu_q')|\le |B_m(\uu_q')|\cdot |B_n(\uu_q')|
\end{equation*}
for all $m,n\ge 1$.

Denoting by $B_{k,\ell}(\uu_q')$ the set of words $c_k\cdots c_{\ell}$ where $(c_i)$ runs over $\uu_q'$, we have clearly
\begin{equation*}
|B_{m+n}(\uu_q')|=|B_{1,m+n}(\uu_q')|\le |B_{1,m}(\uu_q')|\cdot|B_{m+1,m+n}(\uu_q')|.
\end{equation*}
Notice that $|B_{m+1,m+n}(\uu_q')|\le |B_n(\uu_q')|$ because $(c_{m+i})\in\uu_q'$ for every $(c_i)\in\uu_q'$.
This completes the proof.
\end{proof}

\begin{lemma}\label{l22}\mbox{}

\begin{enumerate}[\upshape (i)]
 \item If $q\ge M+1$, then $h\left(\uu_q'\right)=\log (M+1)$.
 \item If $1<q<q'$, then $h\left(\uu_q'\right)=0$.
\end{enumerate}
\end{lemma}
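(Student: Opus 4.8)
The plan is to prove both parts by identifying the set $\uu_q'$ of univoque sequences explicitly in the two extreme regimes and then reading off the entropy from the definition \eqref{12}, using the submultiplicativity established in Lemma \ref{l21}.

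For part (i), when $q\ge M+1$ every sequence $(c_i)\in\set{0,1,\ldots,M}^\infty$ is the unique expansion of $\pi_q(c)$. The clean way to see this is the lexicographic characterization of unique expansions from \cite{DeVriesKomornik2009}: a sequence $(c_i)$ is univoque precisely when $(c_{n+1}c_{n+2}\cdots) \prec \beta(q)$ whenever $c_n<M$, and the reflected tail $(\overline{c_{n+1}}\,\overline{c_{n+2}}\cdots)\prec\beta(q)$ whenever $c_n>0$, where $\overline{a}=M-a$. Since $q\ge M+1$ forces $\beta(q)=(M)^\infty$ (indeed $\pi_q((M)^\infty)=M/(q-1)\le 1$ with equality at $q=M+1$), both conditions are vacuous, so $\uu_q' = \set{0,1,\ldots,M}^\infty$ and $|B_n(\uu_q')|=(M+1)^n$, giving $h(\uu_q')=\log(M+1)$. (One must note that at $q=M+1$ the digit $M$ in every position is still allowed, so the full shift is recovered; the only numbers with two expansions are the countably many $q$-adic rationals, which does not affect $B_n$.)

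For part (ii), when $1<q<q'$ the Komornik--Loreti constant $q'$ is by definition the smallest base in which $x=1$ has a unique expansion, so for $q<q'$ the quasi-greedy expansion $\alpha(q)=(\alpha_i(q))$ of $1$ is \emph{not} univoque, which combinatorially means $\alpha(q)$ is eventually periodic — more usefully, it means there is a uniform bound $N=N(q)$ and a word constraint forcing every univoque sequence to be ultimately trapped. The cleanest route is to invoke the known result (from \cite{DeVriesKomornik2009}, or \cite{KomornikLoreti2007}) that $\uu_q$ is \emph{countable} for $q<q'$; indeed $\uuuq$ is finite for such $q$ in the relevant combinatorial sense, or at worst $\uu_q'$ is a finite union of periodic orbits plus their preimages. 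A countable subshift-like set has $|B_n(\uu_q')|$ growing subexponentially, and then Lemma \ref{l21} upgrades $\liminf \frac{\log|B_n|}{n}=0$ to $h(\uu_q')=\inf_n \frac{\log|B_n|}{n}=0$. Alternatively, one argues directly: for $q<q'$ there is an integer $k$ with $\alpha_1(q)\cdots\alpha_k(q)$ strictly dominating any admissible suffix, which bounds $|B_n(\uu_q')|$ by a polynomial (or even a constant) in $n$.

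The main obstacle is part (ii): one must be careful that $\uu_q'$ need not be a subshift, so the entropy could a priori be subtler — but Lemma \ref{l21} is exactly what neutralizes this, since it tells us $h(\uu_q')=\inf_n\frac{\log|B_n(\uu_q')|}{n}$, and a single $n$ with $|B_n(\uu_q')|=1$ (which holds once $n$ exceeds the transient length, as all univoque sequences for $q<q'$ must agree with $0^\infty$ or $M^\infty$ from some bounded point on, being eventually constant) already forces $h=0$. Part (i) is routine once the full-shift identification is in place.
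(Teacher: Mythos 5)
Your part (i) is essentially the paper's argument and is fine in substance: for $q>M+1$ every sequence is the unique expansion of its value, giving the full shift (note, though, that $\beta(q)$ does not exist for $q>M+1$, since $x=1$ has no expansion there, so the lexicographic characterization you quote is not the right tool; the separation estimate $\sum_{i>m}Mq^{-i}<q^{-m}$ is), and at $q=M+1$ the set $\uu_{M+1}'$ is \emph{not} the full shift but the full shift minus a countable set; since every cylinder contains uncountably many sequences this still yields $B_n(\uu_{M+1}')=\set{0,\ldots,M}^n$, which the paper gets instead by the explicit extension $c_1\cdots c_n(0M)^{\infty}$.

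Part (ii), however, has a genuine gap. Your key step is the implication ``$\uu_q'$ is countable, hence $|B_n(\uu_q')|$ grows subexponentially''. This implication is false for general countable shift-invariant sets of sequences: the set of eventually zero sequences in $\set{0,\ldots,M}^{\infty}$ is countable and shift-invariant, yet every word occurs as a prefix, so $|B_n|=(M+1)^n$ and $\inf_n n^{-1}\log|B_n|=\log(M+1)$. (Countable \emph{compact} subshifts do have zero entropy, but $\uu_q'$ need not be closed, so that cannot be invoked without extra work.) Your fallback claims are also incorrect: it is not true that for every $q<q'$ all univoque sequences are eventually constant with a bounded transient --- this holds only below the generalized golden ratio, whereas for $q$ between it and $q'$ the set $\uu_q'$ is countably infinite and contains, e.g.\ for $M=1$, the sequence $(10)^{\infty}$; moreover $|B_n(\uu_q')|=1$ never holds, since $0^n$ and $M^n$ always lie in $B_n(\uu_q')$. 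So entropy zero for $q<q'$ does not follow from countability alone: one needs either the finer structural description of $\uu_q'$ in this range (eventually periodic sequences of a restricted form, as in the cited works of Glendinning--Sidorov, de Vries--Komornik and Kong--Li--Dekking, which does give subexponential word growth), or the paper's route, which is different from yours at exactly this point: countability gives $D(q)=\dim_H\uu_q=0$, and the already-established formula of Theorem \ref{t12} then converts this into $h(\uu_q')=0$.
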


\begin{proof}
If $q>M+1$, then $\uu_q'=\set{0,\ldots,M}^{\infty}$ is the full shift.
Therefore
\begin{equation*}
h\left(\uu_q'\right)=\lim_{n\to\infty}\frac{\log \abs{B_n(\uu_q')}}{n}=\lim_{n\to\infty}\frac{\log (M+1)^n}{n}=\log (M+1).
\end{equation*}

If $q=M+1$, then the above equalities remain valid. Indeed, we still have $B_n(\uu_q')=\set{0,\ldots,M}^n$ for all $n\ge 1$ because $c_1\cdots c_n(0M)^{\infty}\in\uu_q'$ for every word $c_1\cdots c_n\in\set{0,\ldots,M}^n$.

The case $1<q<q'$ follows from Theorem \ref{t12} because $\uu_q'$ is countable by \cite{GlendinningSidorov2001} (for $M=1$) and \cite{DeVriesKomornik2009}, \cite{KongLiDekking2010}, \cite{KongLi2014} (for all $M\ge 1$) and therefore $D(q)=0$.
\end{proof}

\emph{Henceforth we assume that $q'\le q\le M+1$.}
Then $x=1$ has an expansion.

We start by recalling some properties of the greedy and quasi-greedy expansions.
We denote by $\beta(q)=(\beta_i(q))$ the \emph{greedy}, i.e., the lexicographically largest expansion of $x=1$ in base $q$.
Furthermore, we denote by $\alpha(q)=(\alpha_i(q))$ the \emph{quasi-greedy}, i.e., the lexicographically largest \emph{infinite} expansion of $x=1$ in base $q$.
Here and in the sequel an expansion is called \emph{infinite} if it contains infinitely many non-zero digits.

Greedy expansions were introduced by Rényi \cite{Renyi1957}, and they were characterized by Parry \cite{Parry1960}.
Quasi-greedy expansions were introduced by Daróczy and Kátai \cite{DaroczyKatai1993}, \cite{DaroczyKatai1995}, in order to give an elegant Parry type characterization of unique expansions:

\begin{lemma}\label{l23}
A sequence $(c_i)$ belongs to $\uu_q'$ if and only if the following two conditions are satisfied:
\begin{align*}
&(c_{n+i})<\alpha(q)\quad\text{whenever}\quad c_1\ldots c_n\ne M^n, \\
&\overline{(c_{n+i})}<\alpha(q)\quad\text{whenever}\quad c_1\ldots c_n\ne 0^n.
\end{align*}
\end{lemma}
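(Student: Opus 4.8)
The statement to prove is Lemma~\ref{l23}, the Parry-type characterization of unique expansions due to Daróczy--Kátai. The plan is to reduce the condition ``$(c_i)$ is the \emph{only} expansion of $\pi_q((c_i))$'' to a pair of lexicographic inequalities involving the quasi-greedy expansion $\alpha(q)$, working tail by tail.

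First I would recall the basic mechanism by which a number acquires more than one expansion. Given an expansion $(c_i)$ of $x$, an alternative expansion can be manufactured exactly when some tail $(c_{n+i})$ is ``too large'' or ``too small'' relative to what the base permits: concretely, if $c_{n+1}<M$ and the tail $(c_{n+i})$ is at least as large as some expansion of $1$ (i.e. can be decreased at position $n+1$ and compensated), then one gets a lexicographically smaller expansion; dually, if $c_{n+1}>0$ and the reflected tail $\overline{(c_{n+i})}=(M-c_{n+i})$ is at least as large as some expansion of $1$, one gets a lexicographically larger expansion. So the first step is the elementary lemma: $(c_i)$ is the unique expansion of $x$ if and only if for every $n\ge 0$ with $c_1\cdots c_n\ne M^n$ one has $\pi_q((c_{n+i}))<1$ strictly \emph{as a value}, and for every $n\ge 0$ with $c_1\cdots c_n\ne 0^n$ one has $\pi_q(\overline{(c_{n+i})})<1$; the side conditions $c_1\cdots c_n\ne M^n$ (resp. $\ne 0^n$) are precisely what guarantees there is room at some earlier position to absorb the change.

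The second step is to convert the value inequality $\pi_q((c_{n+i}))<1$ into the lexicographic inequality $(c_{n+i})<\alpha(q)$. Here I would use the defining property of the quasi-greedy expansion $\alpha(q)$: it is the lexicographically largest \emph{infinite} expansion of $1$, and for any sequence $(d_i)$ one has $\pi_q((d_i))<1$ if and only if $(d_i)$ is lexicographically smaller than $\alpha(q)$. The ``only if'' direction of this equivalence is immediate from $\pi_q$ being order-preserving on sequences that are not eventually the all-$M$ word; the ``if'' direction uses that $\pi_q((d_i))=1$ forces $(d_i)$ to be an expansion of $1$, hence $\le\beta(q)$, and a short argument (replacing a greedy finite expansion by its quasi-greedy infinite counterpart) shows any expansion of $1$ other than those $\le\alpha(q)$ must actually equal $\beta(q)$ and be finite, which is still $<$ something — one has to be careful here about the finite-versus-infinite subtlety. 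Applying this to $(d_i)=(c_{n+i})$ gives the first displayed condition, and applying it to $(d_i)=\overline{(c_{n+i})}$ gives the second.

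The main obstacle, and the place requiring genuine care rather than routine bookkeeping, is the finite/infinite distinction in the equivalence $\pi_q((d_i))<1\iff (d_i)<\alpha(q)$: strictly positive value versus strict lexicographic inequality can fail to match up when $(d_i)$ is a finite expansion of $1$ (value exactly $1$ but lexicographically it could sit above or below $\alpha(q)$ depending on $q$), so one must check that the side hypotheses $c_1\cdots c_n\ne M^n$ and $c_1\cdots c_n\ne 0^n$ exactly rule out the problematic boundary cases. Once that is pinned down, the proof is a clean two-way implication: the ``only if'' direction runs the alternative-expansion construction above to derive both inequalities, and the ``if'' direction assumes the two lexicographic conditions and shows that any competing expansion $(c_i')\ne(c_i)$ would, at the first index where it differs, violate one of them. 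I would assume the characterizations of greedy expansions (Parry~\cite{Parry1960}) and the existence and basic properties of $\alpha(q)$ as quoted in the paragraph preceding the lemma, and cite \cite{DaroczyKatai1993} for the original argument.
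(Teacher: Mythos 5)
The paper itself does not prove Lemma~\ref{l23}: it quotes it as a known characterization from \cite{DaroczyKatai1993} and \cite{DeVriesKomornik2009}, so the comparison here is with the standard proof in those references. Your first reduction (uniqueness of the expansion is equivalent to the value inequalities $\pi_q((c_{n+i}))<1$ when $c_1\cdots c_n\ne M^n$ and $\pi_q(\overline{(c_{n+i})})<1$ when $c_1\cdots c_n\ne 0^n$) is correct in substance, although your description of the mechanism mixes up the directions of the digit changes. The genuine gap is in your second step: the asserted equivalence ``for any sequence $(d_i)$, $\pi_q((d_i))<1$ if and only if $(d_i)<\alpha(q)$'' is false in the direction your ``if'' part needs. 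Only $\pi_q((d_i))<1\Rightarrow(d_i)<\alpha(q)$ holds in general, by quasi-greedy maximality: if $(d_i)\ge\alpha(q)$ with first difference at $j$, then $\pi_q((d_i))\ge\sum_{i<j}\alpha_i(q)q^{-i}+(\alpha_j(q)+1)q^{-j}\ge1$. The converse fails: for $M=1$ and $q=(1+\sqrt5)/2$ one has $\alpha(q)=(10)^\infty$, and $d=01^\infty$ satisfies $d<\alpha(q)$ lexicographically while $\pi_q(d)=1$. Hence from ``each relevant tail is $<\alpha(q)$'' you cannot conclude, tail by tail, that each relevant tail has value $<1$. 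Your proposed repair is also incorrect as stated: it is not true that an expansion of $1$ which is not $\le\alpha(q)$ must equal $\beta(q)$ and be finite (at the golden ratio, $10110^\infty$, $101010110^\infty,\dots$ are expansions of $1$ strictly between $\alpha(q)$ and $\beta(q)$), and the troublesome sequences, like $01^\infty$ above, lie \emph{below} $\alpha(q)$, so the finite-versus-infinite boundary discussion does not isolate them.

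What the ``if'' direction actually requires is to exploit the lexicographic hypotheses at \emph{all} later indices simultaneously, not a pointwise translation. For instance, if $v_n:=\pi_q((c_{n+i}))\ge1$ for some admissible $n$, let $j$ be the first index with $c_{n+j}<\alpha_j(q)$ (it exists because $(c_{n+i})<\alpha(q)$); since $\alpha(q)$ is an infinite expansion of $1$ with $\sum_{i\le j}\alpha_i(q)q^{-i}<1$, one gets $v_{n+j}\ge 1+\pi_q\bigl((\alpha_{j+i}(q))\bigr)>1$, and iterating this estimate the excess over $1$ is multiplied by at least $q^{j}$ at each step, contradicting the bound $v_m\le M/(q-1)$. (In the counterexample $d=01^\infty$ this is exactly what goes wrong for you: its shifted tail $1^\infty$ violates $<\alpha(q)$, which a pointwise equivalence cannot detect.) Equivalently, one can argue as in \cite{DeVriesKomornik2009} by constructing a second expansion directly and examining the first index of difference. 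So the architecture of your proof is fine, but the key translation step must be replaced by the one-sided implication plus an argument over all shifts.
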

Here for a sequence $c=(c_i)$ we denote by $\overline{c}=(M-c_i)$, and  for a word $c_1\cdots c_k$ we write $\overline{c_1\cdots c_k}=(M-c_1)\cdots(M-c_k)$.

We also recall some results on the relationship between greedy and quasi-greedy expansions, and on their continuity properties:

\begin{lemma}\label{l24}\mbox{}

\begin{enumerate}[\upshape (i)]
\item If $\beta(q)$ is infinite, then $\alpha(q)=\beta(q)$.
Otherwise, $\beta(q)$ has a last non-zero digit $\beta_m(q)$, and $\alpha(q)$ is periodic with the period $\beta_1(q)\cdots \beta_{m-1}(q)(\beta_m(q)-1)$.
\item If $q_n\nearrow q$, then $\alpha(q_n)\to \alpha(q)$ component-wise.
\item If $q_n\searrow q$, then $\beta(q_n)\to \beta(q)$ component-wise.
\end{enumerate}
\end{lemma}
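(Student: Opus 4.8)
The plan is to handle the three parts separately; parts (ii) and (iii) share a common scheme.

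For (i), if $\beta(q)$ is infinite then it is automatically the lexicographically largest infinite expansion of $1$, so $\alpha(q)=\beta(q)$ and there is nothing more to do. If $\beta(q)$ has a last nonzero digit $\beta_m(q)$, so that $\sum_{i=1}^m\beta_i(q)q^{-i}=1$, I would first verify by summing a geometric series that the periodic sequence $w:=(\beta_1(q)\cdots\beta_{m-1}(q)(\beta_m(q)-1))^\infty$ satisfies $\pi_q(w)=1$, using that one period contributes $1-q^{-m}$; note $w$ is infinite because its period cannot be $0^m$ (else $1$ would be $\pi_q$ of a word of length $<m$, impossible for $q>1$). It remains to show $w$ is the largest infinite expansion of $1$: if $(c_i)>w$ is another infinite expansion, then from $(c_i)\le\beta(q)$ a comparison of first-difference positions forces $c_1\cdots c_m$ to equal one period of $w$, and then $\sum_{i=1}^m\beta_i(q)q^{-i}=1$ makes $(c_{m+i})$ again an infinite expansion of $1$ strictly larger than $w$; iterating pushes the disagreement arbitrarily far right, forcing $(c_i)=w$, a contradiction.

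For (iii), fix $k$. Since the greedy remainder $q^k(1-\sum_{i=1}^k\beta_i(q)q^{-i})$ lies in $[0,1)$, the same quantity with $q$ replaced by any $p>q$ close to $q$ still lies in $[0,1)$; appending the greedy expansion in base $p$ of this remainder to the word $\beta_1(q)\cdots\beta_k(q)$ produces an expansion of $1$ in base $p$, so $\beta_1(p)\cdots\beta_k(p)\ge\beta_1(q)\cdots\beta_k(q)$ for all $p>q$ close to $q$. Now if $q_n\searrow q$, by compactness of $\set{0,\ldots,M}^\infty$ every subsequence of $(\beta(q_n))$ has a componentwise-convergent sub-subsequence, with limit $(b_i)$ say; the previous sentence gives $(b_i)\ge\beta(q)$, while passing to the limit in $\pi_{q_n}(\beta(q_n))=1$ (legitimate since the digits are bounded) gives $\pi_q((b_i))=1$, whence $(b_i)\le\beta(q)$ by maximality of the greedy expansion. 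Thus every subsequential limit equals $\beta(q)$, i.e.\ $\beta(q_n)\to\beta(q)$ componentwise.

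For (ii) I would run the same argument with $\alpha$ in place of $\beta$ and $q_n\nearrow q$, but using that $\sum_{i=1}^k\alpha_i(q)q^{-i}<1$ strictly (as $\alpha(q)$ is infinite) and appending the \emph{quasi-greedy} expansion in base $p<q$ of the (now strictly positive) remainder, which yields an \emph{infinite} expansion of $1$ in base $p$; this gives $\alpha_1(p)\cdots\alpha_k(p)\ge\alpha_1(q)\cdots\alpha_k(q)$ and hence $(a_i)\ge\alpha(q)$ for any componentwise subsequential limit $(a_i)$ of $(\alpha(q_n))$, and $\pi_q((a_i))=1$ as before. The extra ingredient needed here is that $(a_i)$ is \emph{infinite}: if it ended with a nonzero digit in position $\ell$, then $\sum_{i=1}^\ell a_iq^{-i}=1$ would force $\sum_{i=1}^\ell a_iq_n^{-i}>1$ for every $q_n<q$, so no expansion of $1$ in base $q_n$ could begin with $a_1\cdots a_\ell$, contradicting $\alpha_1(q_n)\cdots\alpha_\ell(q_n)=a_1\cdots a_\ell$ for large $n$. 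Hence $(a_i)$ is infinite, so $(a_i)\le\alpha(q)$ by maximality among infinite expansions, and therefore $(a_i)=\alpha(q)$; as in (iii) this gives $\alpha(q_n)\to\alpha(q)$ componentwise.

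I expect the main obstacle to be exactly this last point in (ii): componentwise limits of infinite sequences need not be infinite, so the symmetry with (iii) breaks and one needs the ``overshoot'' remark — a finite expansion of $1$ in base $q$ cannot be a prefix of any expansion of $1$ in a base smaller than $q$ — to exclude a finite limit. Everything else is routine, relying only on continuity of $\pi_q$ in the sequence argument, the maximality characterizations of $\beta(q)$ and $\alpha(q)$, and a geometric-series identity.
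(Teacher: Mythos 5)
Your proposal is correct, and in fact the paper itself contains no proof of Lemma \ref{l24}: it only refers to \cite{BaiocchiKomornik2007}, \cite{DeVriesKomornik2009} and \cite{DeVriesKomornik2010}, so what you have written is a self-contained substitute for the cited facts rather than a variant of an internal argument. Your part (i) is the standard computation (one period sums to $1-q^{-m}$, and maximality among infinite expansions follows by the iteration you describe); the only compressed spot is that the first-difference comparison at position $m$ also allows $c_1\cdots c_m=\beta_1(q)\cdots\beta_m(q)$, which must be ruled out by noting that then $(c_i)$ would have to coincide with the finite sequence $\beta(q)$, contradicting infiniteness — one extra line. For (ii) and (iii) the references proceed via monotonicity of $q\mapsto\beta(q)$, $q\mapsto\alpha(q)$ and explicit stability of prefixes, whereas you use a compactness/subsequential-limit scheme together with the one-sided prefix estimates obtained by re-expanding the (quasi-)greedy remainder in the nearby base; this is an equally valid and arguably cleaner route, and you correctly isolate the genuinely asymmetric point: a componentwise limit of the infinite sequences $\alpha(q_n)$ need not be infinite, and your ``overshoot'' argument (a finite expansion of $1$ in base $q$ has partial sum exceeding $1$ in any smaller base) is exactly what excludes a finite limit. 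Two harmless remarks to make explicit when writing this up: the remainder property $q^k\left(1-\sum_{i=1}^k\beta_i(q)q^{-i}\right)\in[0,1)$ used in (iii) holds for $1<q<M+1$ (at $q=M+1$ the quantity equals $1$), which is the only case in which $q_n\searrow q$ can occur inside the domain of $\beta$; and in (ii) one should check that the positive remainder is at most $Mp^{-k}/(p-1)$, so that it really admits an infinite (quasi-greedy) expansion in base $p$ — immediate, since it is smaller than the corresponding remainder in base $q$, which is at most $Mq^{-k}/(q-1)$.
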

See, e.g., \cite{BaiocchiKomornik2007}, \cite{DeVriesKomornik2009} and \cite{DeVriesKomornik2010} for proofs.

Instead of $\uu_q'$ and $\uu_q$ it will be easier to consider the slightly modified sets
\begin{align*}
&\widetilde\uu_q':=\set{(c_i)\ :\ \overline{\alpha(q)}<(c_{m+i})<\alpha(q)\quad\text{for all}\quad m=0,1,\ldots}
\intertext{and}
&\widetilde\uu_q:=\pi_q(\widetilde\uu_q')=\set{\sum_{i=1}^{\infty}\frac{c_i}{q^i}\ :\ (c_i)\in\widetilde\uu_q'}.
\end{align*}

\begin{lemma}\label{l25}\mbox{}

\begin{enumerate}[\upshape (i)]
\item $\uu_q$ is the union of $0$, $M/(q-1)$, and of countably many sets, each similar to $\widetilde\uu_q$.
\item $\uu_q'$ and $\widetilde\uu_q'$ have the same topological entropy.
\end{enumerate}
\end{lemma}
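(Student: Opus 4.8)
The plan is to compare $\uu_q'$ with $\widetilde\uu_q'$ directly via Lemma~\ref{l23}. For part (ii), the key observation is that the two defining conditions are \emph{tail conditions}: they only constrain $(c_{n+i})$ for various shifts $n$, and differ only in the degenerate cases $c_1\cdots c_n=M^n$ (resp.\ $0^n$). Concretely, a sequence $(c_i)\in\uu_q'$ automatically satisfies $(c_{m+i})\le\alpha(q)$ and $\overline{(c_{m+i})}\le\alpha(q)$ for every $m$, with equality impossible except when the prefix is constant; so a sequence in $\widetilde\uu_q'$ certainly lies in $\uu_q'$ (strict inequalities are stronger), giving $\widetilde\uu_q'\subseteq\uu_q'$. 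Conversely, I would argue that every $(c_i)\in\uu_q'$ either already lies in $\widetilde\uu_q'$, or has a prefix equal to $M^n$ or $0^n$; in the latter situation one can still read off an honest element of $\widetilde\uu_q'$ after deleting a bounded-length prefix, or observe that $B_n(\uu_q')$ and $B_n(\widetilde\uu_q')$ differ by at most a fixed additive constant. Either way $|B_n(\uu_q')|$ and $|B_n(\widetilde\uu_q')|$ are comparable up to bounded additive/multiplicative error, so the limits defining their entropies (which exist by Lemma~\ref{l21}, applied also to $\widetilde\uu_q'$, whose submultiplicativity proof is identical) coincide.

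For part (i), the idea is the standard decomposition of the univoque set of reals by the ``first escape'' from the critical window. Recall (Lemma~\ref{l23}) that $x=\pi_q(c)$ lies in $\uu_q$ exactly when its unique expansion $c=(c_i)$ obeys the two lexicographic tail conditions. The trivial cases are $c=0^\infty$ (giving $x=0$) and $c=M^\infty$ (giving $x=M/(q-1)$). For any other $c\in\uu_q'$, let $n\ge 0$ be the largest index such that $c_1\cdots c_n$ is either $0^n$ or $M^n$ (so $c_{n+1}\ne 0$ in the first case, $c_{n+1}\ne M$ in the second, and $n=0$ if neither $c_1=0$ nor $c_1=M$ in the relevant sense). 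Then from position $n+1$ on, \emph{both} tail conditions of Lemma~\ref{l23} are active for every further shift, which is precisely the condition $\overline{\alpha(q)}<(c_{n+1+i-1})<\alpha(q)$ defining $\widetilde\uu_q'$ after a shift; hence $(c_{n+i})_{i\ge1}\in\widetilde\uu_q'$. This realizes $c$ inside one of countably many ``cylinders'' indexed by the finite prefix $c_1\cdots c_n\in\{0^n, M^n : n\ge 0\}$, and under $\pi_q$ each such cylinder maps $\widetilde\uu_q$ onto a set similar to itself (an affine image $x\mapsto \pi_q(c_1\cdots c_n)+q^{-n}x$). Collecting: $\uu_q=\{0\}\cup\{M/(q-1)\}\cup\bigcup_{c_1\cdots c_n}\big(\pi_q(c_1\cdots c_n)+q^{-n}\widetilde\uu_q\big)$, a union of $0$, $M/(q-1)$, and countably many similar copies of $\widetilde\uu_q$.

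The main obstacle I anticipate is bookkeeping the boundary cases cleanly: making sure that the ``largest $n$ with constant prefix'' is well defined (it is, since $c\ne 0^\infty, M^\infty$ forces $n$ finite), and checking that after the shift the one previously-inactive condition genuinely becomes active in the strict form demanded by $\widetilde\uu_q'$ — this uses that $\alpha(q)$ and $\overline{\alpha(q)}$ are infinite sequences, so that e.g. $(c_{n+i})\le\alpha(q)$ with $c_{n+1}\ne M$ forces $\overline{(c_{n+i})}<\alpha(q)$ to become the binding constraint, and vice versa. A second minor point is verifying that distinct admissible prefixes $c_1\cdots c_n$ yield essentially disjoint (or at worst countably-overlapping) images, which is immediate since the prefixes are all of the form $0^n$ or $M^n$ and the map is injective on each cylinder. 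Part (ii) is then essentially formal once the prefix-stripping observation of part (i) is in hand, because deleting a bounded prefix changes $B_n$ only by a bounded factor.
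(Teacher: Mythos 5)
Your reduction strips too little, and the central claim of your decomposition is false. Membership in $\widetilde\uu_q'$ requires the two-sided strict inequality $\overline{\alpha(q)}<(d_{m+i})<\alpha(q)$ also at the shift $m=0$, whereas Lemma \ref{l23} only gives conditions at shifts $n\ge 1$ of the original sequence, and only a one-sided condition at those $n$ for which the prefix is still constant. So after deleting the maximal constant prefix $0^n$ or $M^n$ (or nothing, when $0<c_1<M$), the remaining tail need not lie in $\widetilde\uu_q'$; your heuristic that the previously inactive condition ``becomes the binding constraint'' because $\alpha(q)$ is infinite is not a proof, and it fails. Concretely, take $M=4$ and let $q=(3+\sqrt{21})/2$ be the root of $q^2=3q+3$, so that $\alpha(q)=(32)^\infty$, $\overline{\alpha(q)}=(12)^\infty$, and $q'<q<M+1$ (indeed $\alpha(q')=3223\cdots<(32)^\infty$). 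The sequence $c=0112^\infty$ satisfies Lemma \ref{l23}: every tail from position $n\ge1$ is $<(32)^\infty$, and every tail from position $n\ge2$ (the only $n$ with $c_1\cdots c_n\ne 0^n$) is $>(12)^\infty$ because $1222\cdots>1212\cdots$; hence $c\in\uu_q'$. Your recipe assigns to it the prefix $0$ and the tail $112^\infty$, but $112^\infty\notin\widetilde\uu_q'$ since $112^\infty<(12)^\infty=\overline{\alpha(q)}$. (Even simpler: $112^\infty$ itself lies in $\uu_q'$ and starts with a digit in $\set{1,\ldots,M-1}$, so your claim would force $112^\infty\in\widetilde\uu_q'$, which is false.) Moreover $x=\pi_q(0112^\infty)\in\uu_q$ lies in none of your sets $\pi_q(c_1\cdots c_n)+q^{-n}\widetilde\uu_q$ with $c_1\cdots c_n\in\set{0^n,M^n}$: since $x$ has the unique expansion $0112^\infty$, a representation $x=\pi_q(0^md)$ with $d\in\widetilde\uu_q'$ would force $m=1$ and $d=112^\infty$, which is excluded, and a prefix $M^m$ is impossible. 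So part (i) as you state it is wrong, and the prefix-stripping on which your part (ii) relies breaks at the same point.

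The repair is exactly the paper's decomposition: remove the constant run \emph{and} the first digit after it, i.e.\ index the copies by the prefixes $c_1\in\set{1,\ldots,M-1}$, $0^{m-1}c_m$ with $c_m>0$, and $M^{m-1}c_m$ with $c_m<M$. Then for every shift $k\ge0$ of the remaining tail the corresponding prefix of $c$ has length at least $m$ and is neither $0^{m+k}$ nor $M^{m+k}$, so Lemma \ref{l23} yields both strict inequalities and the tail genuinely lies in $\widetilde\uu_q'$ (in the example above, the correct tail is $12^\infty\in\widetilde\uu_q'$). Two secondary claims in your part (ii) also need correcting: the constant run is not of bounded length, and $|B_n(\uu_q')|$ and $|B_n(\widetilde\uu_q')|$ do not differ by a bounded additive constant. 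The correct comparison, as in the paper, is a polynomial multiplicative bound of the type $|B_n(\uu_q')|\le C n^2|B_n(\widetilde\uu_q')|$, obtained by summing over the possible prefix lengths and types; this factor disappears after taking $\frac1n\log$, which together with $\widetilde\uu_q'\subseteq\uu_q'$ (this inclusion of yours is correct) gives $h(\uu_q')=h(\widetilde\uu_q')$.
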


\begin{proof}
(i) Let $(c_i)\in\uu_q'$ be different from  $0^{\infty}$ and  $M^{\infty}$.
If $0<c_1<M$, then $(c_{1+i})\in \widetilde\uu_q'$ by Lemma \ref{l23}.

If $c_1=0$, then there exists a smallest $m>1$ such that $c_m>0$, and $(c_{m+i})\in \widetilde\uu_q'$ by Lemma \ref{l23}.

If $c_1=M$, then there exists a smallest $m>1$ such that $c_m<M$, and $(c_{m+i})\in \widetilde\uu_q'$ by Lemma \ref{l23}.

It follows that $\uu_q$ is the union of $0$, $M/(q-1)$, and of the sets
\begin{align*}
&\frac{c_1}{q}+\frac{1}{q}\widetilde\uu_q,\quad c_1=1,\ldots, M-1, \\
&\frac{c_m}{q^m}+\frac{1}{q^m}\widetilde\uu_q,\quad m=2,3,\ldots,\quad c_m=1,\ldots, M, \\
&\left( \sum_{i=1}^{m-1}\frac{M}{q^i}\right) +\frac{c_m}{q^m}+\frac{1}{q^m}\widetilde\uu_q,\quad m=2,3,\ldots,\quad c_m=0,\ldots, M-1.
\end{align*}
We conclude by observing that all these sets are similar to $\widetilde\uu_q$.
\medskip

(ii) The above reasoning shows also that each word of $B_n(\uu_q')$ has the form $0^kM^{m-k}w$ or $M^k0^{m-k}w$ with some word $w\in B_{n-m}(\widetilde\uu_q')$ and some integers $k,m$ satisfying $0\le k\le m\le n$.
Hence
\begin{equation*}
\abs{B_n(\uu_q')}\le \sum_{m=0}^n2(m+1)\abs{B_{n-m}(\widetilde\uu_q')}\le (n+1)(2n+2)\abs{B_n(\widetilde\uu_q')}.
\end{equation*}
Since $\widetilde\uu_q'\subseteq \uu_q'$, it follows that
\begin{align*}
\lim_{n\to\infty}\frac{\log \abs{B_n(\widetilde\uu_q')}}{n}
&\le \lim_{n\to\infty}\frac{\log \abs{B_n(\uu_q')}}{n}\\
&\le \lim_{n\to\infty}\frac{\log (2n+2)^2\abs{B_n(\widetilde\uu_q')}}{n}\\
&=\lim_{n\to\infty}\frac{\log \abs{B_n(\widetilde\uu_q')}}{n}+\lim_{n\to\infty}\frac{2\log (2n+2)}{n}\\
&=\lim_{n\to\infty}\frac{\log \abs{B_n(\widetilde\uu_q')}}{n},
\end{align*}
whence $h(\widetilde\uu_q')=h(\uu_q')$.
\end{proof}

Since $\widetilde\uu_q'$ is not always a subshift, we introduce also the related sets
\begin{align*}
&\widetilde\vv_q':=\set{(c_i)\ :\ \overline{\alpha(q)}\le (c_{m+i})\le \alpha(q)\quad\text{for all}\quad m=0,1,\ldots}
\intertext{and}
&\widetilde\vv_q:=\pi_q(\widetilde\vv_q')=\set{\sum_{i=1}^{\infty}\frac{c_i}{q^i}\ :\ (c_i)\in\widetilde\vv_q'}.
\end{align*}

\begin{lemma}\label{l26}
$\widetilde\vv_q'$ is a subshift, and $\widetilde\uu_q'\subseteq \widetilde\vv_q'$.
\end{lemma}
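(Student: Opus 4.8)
The plan is to prove the two assertions of Lemma \ref{l26} separately, both by direct appeals to the lexicographic characterizations already in hand.

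First I would show that $\widetilde\vv_q'$ is a subshift, i.e.\ closed and shift-invariant. Shift-invariance is immediate: if $(c_i)\in\widetilde\vv_q'$ then for every $m\ge 0$ the shifted sequence $(c_{m+i})$ again satisfies $\overline{\alpha(q)}\le (c_{k+i})\le\alpha(q)$ for all $k\ge 0$, since these are exactly the conditions with index $m+k$ among those defining membership. For closedness, suppose $(c_i)$ is a componentwise limit of sequences $(c_i^{(j)})\in\widetilde\vv_q'$. The inequalities $\overline{\alpha(q)}\le (c_{m+i}^{(j)})\le\alpha(q)$ are non-strict lexicographic inequalities, and such inequalities are preserved under componentwise limits: if two sequences agree up to some position and then the limit sequence first differs, the corresponding coordinates of $(c_i^{(j)})$ must eventually equal those of the limit, so the weak inequality persists. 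Hence $(c_i)\in\widetilde\vv_q'$, and $\widetilde\vv_q'$ is closed. Being a closed shift-invariant subset of the full shift $\set{0,\ldots,M}^{\infty}$ (note every digit $c_i$ lies in $\set{0,\ldots,M}$ because $0\le\overline{\alpha(q)}\le\alpha(q)\le M$ in the first coordinate — here one uses $q'\le q\le M+1$ so that $\alpha_1(q)\le M$), it is a subshift.

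Next I would prove the inclusion $\widetilde\uu_q'\subseteq\widetilde\vv_q'$. This is nearly tautological from the definitions: if $(c_i)\in\widetilde\uu_q'$, then $\overline{\alpha(q)}<(c_{m+i})<\alpha(q)$ for every $m\ge 0$, and strict lexicographic inequalities trivially imply the corresponding non-strict ones, so $(c_i)\in\widetilde\vv_q'$.

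I do not anticipate a genuine obstacle here; the only point requiring a little care is the stability of non-strict lexicographic inequalities under componentwise convergence, which is what makes $\widetilde\vv_q'$ closed whereas $\widetilde\uu_q'$ (defined by strict inequalities) need not be — this dichotomy is precisely the reason the authors introduce $\widetilde\vv_q'$. One should also make sure the ambient alphabet is correct, i.e.\ that every coordinate of a sequence in $\widetilde\vv_q'$ is an element of $\set{0,\ldots,M}$, which follows from $\alpha_1(q)\le M$ (equivalently $q\le M+1$) together with the defining inequality at $m=0$.

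\begin{proof}
\emph{$\widetilde\vv_q'$ is shift-invariant.} If $(c_i)\in\widetilde\vv_q'$ and $n\ge 0$, then for every $m\ge 0$ we have $\overline{\alpha(q)}\le (c_{(n+m)+i})\le\alpha(q)$, which says exactly that the shifted sequence $(c_{n+i})$ satisfies the defining conditions; hence $(c_{n+i})\in\widetilde\vv_q'$.

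\emph{$\widetilde\vv_q'$ is closed.} First note that every $(c_i)\in\widetilde\vv_q'$ is a sequence over the alphabet $\set{0,\ldots,M}$: taking $m=0$ gives $\overline{\alpha(q)}\le (c_i)\le\alpha(q)$, and since $q\le M+1$ we have $\alpha_1(q)\le M$, so each $c_i\in\set{0,\ldots,M}$. Now suppose $(c_i^{(j)})\in\widetilde\vv_q'$ converges to $(c_i)$ componentwise. Fix $m\ge 0$; we must show $(c_{m+i})\le\alpha(q)$ and $\overline{(c_{m+i})}\le\alpha(q)$. If $(c_{m+i})>\alpha(q)$, there is a smallest index $k$ with $c_{m+k}>\alpha_k(q)$ and $c_{m+i}=\alpha_i(q)$ for $i<k$; by componentwise convergence, $c_{m+i}^{(j)}=c_{m+i}$ for $i\le k$ once $j$ is large, contradicting $(c_{m+i}^{(j)})\le\alpha(q)$. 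The other inequality is handled the same way. Hence $(c_i)\in\widetilde\vv_q'$.

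Being a closed, shift-invariant subset of the full shift $\set{0,\ldots,M}^{\infty}$, the set $\widetilde\vv_q'$ is a subshift.

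\emph{The inclusion.} If $(c_i)\in\widetilde\uu_q'$, then $\overline{\alpha(q)}<(c_{m+i})<\alpha(q)$ for all $m\ge 0$, and these strict inequalities imply the corresponding non-strict ones, so $(c_i)\in\widetilde\vv_q'$.
\end{proof}
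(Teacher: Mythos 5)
Your proof is correct, but it takes a different route from the paper. The paper works directly with the forbidden-block definition of a subshift (the one in Lind--Marcus, which the paper cites): after dispatching the case $q=M+1$ (full shift), it exhibits the explicit collection $F$ of finite words $d_1\cdots d_n$ with $d_1\cdots d_n<\overline{\alpha_1(q)\cdots\alpha_n(q)}$ or $d_1\cdots d_n>\alpha_1(q)\cdots\alpha_n(q)$, and checks that $\widetilde\vv_q'$ is exactly the set of sequences avoiding $F$ (the nontrivial direction being that a failure of one of the weak inequalities $\overline{\alpha(q)}\le (c_{m+i})\le\alpha(q)$ is witnessed on some finite prefix, precisely because the violated inequality is weak). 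You instead verify the topological characterization: $\widetilde\vv_q'$ is shift-invariant and closed in the product topology, the key observation being that weak lexicographic inequalities are preserved under componentwise limits; your closedness argument is correct, and it has the pedagogical merit of explaining why $\widetilde\uu_q'$, defined by strict inequalities, need not be a subshift. The two arguments are essentially dual uses of the same finite-witness phenomenon, but they are formally different: if the operative definition of subshift is the forbidden-block one, your proof implicitly invokes the standard equivalence ``closed and shift-invariant $\Leftrightarrow$ defined by a set of forbidden blocks'' (Lind--Marcus), which you should cite or prove; the paper's version avoids this and, as a by-product, produces the explicit forbidden-word description that is echoed in the finite-type approximations $\widetilde\vv_{q,n}'$ of Lemma \ref{l27} and in Section \ref{s3}. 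Also note that your remarks about the alphabet are superfluous, since sequences are by definition elements of $\set{0,\ldots,M}^{\infty}$ in this paper; and the case $q=M+1$ needs no separate treatment in your argument, which is a small simplification.
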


\begin{proof}
If $q= M+1$, then $\alpha(q)=M^\infty$, so that $\widetilde\vv_q'=\set{0,1,\cdots,M}^\infty$ is the full shift.

Henceforth assume that $q<M+1$, and consider the set $F$ of all finite blocks $d_1\cdots d_n\in\set{0,\ldots,M}^n$ (of arbitrary length), satisfying one of the lexicographic inequalities
\begin{equation*}
d_1\cdots d_n<\overline{\alpha_1(q)\cdots\alpha_n(q)}\quad\text{and}\quad d_1\cdots d_n>\alpha_1(q)\cdots\alpha_n(q).
\end{equation*}
By definition, none of these blocks appear in any $(c_i)\in \widetilde\vv_q'$.

Conversely, if $(c_i)\in\set{0,1,\cdots,M}^\infty\setminus\widetilde\vv_q'$, then there is a positive integer $m$ such that either
\begin{equation*}
c_mc_{m+1}\cdots<\overline{\alpha(q)}
\end{equation*}
or
\begin{equation*}
c_{m}c_{m+1}\cdots>\alpha(q),
\end{equation*}
and hence there is another positive integer $n$ such that either
\begin{equation*}
c_m\cdots c_{m+n}<\overline{\alpha_1(q)\cdots\alpha_n(q)}
\end{equation*}
or
\begin{equation*}
c_m\cdots c_{m+n}>\alpha_1(q)\cdots\alpha_n(q).
\end{equation*}
Hence $(c_i)$ contains at least one block from $F$.

The inclusion $\widetilde\uu_q'\subseteq\widetilde\vv_q'$ is obvious from the definition.
\end{proof}

Since $\widetilde\uu_q'$ is not always a subshift of finite type, we introduce for each positive integer $n$ the set $\widetilde\uu_{q,n}'$ of sequence $(c_i)$ satisfying for all $m=0,1,\ldots$ the inequalities
\begin{equation*}
\overline{\alpha_1(q)\cdots\alpha_n(q)}<c_{m+1}\cdots c_{m+n}<\alpha_1(q)\cdots\alpha_n(q).
\end{equation*}
Similarly, we define the sets $\widetilde\vv_{q,n}'$ and $\widetilde\ww_{q,n}'$ by replacing the above inequalities by
\begin{equation*}
\overline{\alpha_1(q)\cdots\alpha_n(q)}\le c_{m+1}\cdots c_{m+n}\le\alpha_1(q)\cdots\alpha_n(q)
\end{equation*}
and
\begin{equation*}
\overline{\beta_1(q)\cdots\beta_n(q)}\le c_{m+1}\cdots c_{m+n}\le\beta_1(q)\cdots\beta_n(q),
\end{equation*}
respectively.

\begin{lemma}\label{l27}
$\widetilde\uu_{q,n}'$, $\widetilde\vv_{q,n}'$ and $\widetilde\ww_{q,n}'$ are subshifts of finite type, and
\begin{equation}\label{21}
\widetilde\uu_{q,n}'\subseteq\widetilde\uu_q'\subseteq\widetilde\vv_q'\subseteq\widetilde\vv_{q,n}'\subseteq\widetilde\ww_{q,n}'
\end{equation}
for all $n$.

Furthermore, the sets $\widetilde\uu_{q,n}'$ are increasing, while $\widetilde\vv_{q,n}'$ and $\widetilde\ww_{q,n}'$ are decreasing when $n$ is increasing.
\end{lemma}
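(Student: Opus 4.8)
The plan is to verify three separate claims: that each of the three families consists of subshifts of finite type, that the chain of inclusions \eqref{21} holds, and that the monotonicity in $n$ holds. Each of these follows by unwinding the definitions, so I would organize the argument accordingly rather than as a single computation.

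First, for the subshift-of-finite-type assertion. The set $\widetilde\uu_{q,n}'$ is, by definition, exactly the set of sequences avoiding the finite list of forbidden words of length $n$, namely all words $d_1\cdots d_n$ satisfying $d_1\cdots d_n\le\overline{\alpha_1(q)\cdots\alpha_n(q)}$ or $d_1\cdots d_n\ge\alpha_1(q)\cdots\alpha_n(q)$. Since a set defined by avoiding a finite collection of finite words is a subshift of finite type, we are done; the same reasoning applies verbatim to $\widetilde\vv_{q,n}'$ (forbidden words $d_1\cdots d_n<\overline{\alpha_1(q)\cdots\alpha_n(q)}$ or $>\alpha_1(q)\cdots\alpha_n(q)$) and to $\widetilde\ww_{q,n}'$ (with $\beta$ in place of $\alpha$). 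One should note in passing that the window condition ``for all $m=0,1,\ldots$'' makes the defining condition shift-invariant, so these really are subshifts and not merely closed sets.

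Next, the inclusions \eqref{21}. Reading from left to right: if $(c_i)\in\widetilde\uu_{q,n}'$, then every length-$n$ window $c_{m+1}\cdots c_{m+n}$ lies strictly between $\overline{\alpha_1(q)\cdots\alpha_n(q)}$ and $\alpha_1(q)\cdots\alpha_n(q)$; this forces the infinite tail inequalities $\overline{\alpha(q)}<(c_{m+i})<\alpha(q)$, since if a tail were $\ge\alpha(q)$ (say) then comparing the first $n$ coordinates would give a window $\ge\alpha_1(q)\cdots\alpha_n(q)$, a contradiction --- so $(c_i)\in\widetilde\uu_q'$. The inclusion $\widetilde\uu_q'\subseteq\widetilde\vv_q'$ is the statement of Lemma \ref{l26}. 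For $\widetilde\vv_q'\subseteq\widetilde\vv_{q,n}'$: if all tails satisfy $\overline{\alpha(q)}\le(c_{m+i})\le\alpha(q)$, then truncating to the first $n$ coordinates preserves the non-strict inequalities, giving exactly the defining condition of $\widetilde\vv_{q,n}'$. Finally, $\widetilde\vv_{q,n}'\subseteq\widetilde\ww_{q,n}'$ follows because $\beta(q)$ is the greedy (lexicographically largest) expansion while $\alpha(q)$ is the largest \emph{infinite} one, so $\alpha_1(q)\cdots\alpha_n(q)\le\beta_1(q)\cdots\beta_n(q)$ for every $n$ (when $\beta(q)$ is infinite they agree by Lemma \ref{l24}(i); otherwise Lemma \ref{l24}(i) shows $\alpha(q)$ starts with $\beta_1(q)\cdots\beta_{m-1}(q)(\beta_m(q)-1)$, which is lexicographically smaller) and dually $\overline{\beta_1(q)\cdots\beta_n(q)}\le\overline{\alpha_1(q)\cdots\alpha_n(q)}$; hence any window trapped between $\overline{\alpha_1\cdots\alpha_n}$ and $\alpha_1\cdots\alpha_n$ is a fortiori trapped between $\overline{\beta_1\cdots\beta_n}$ and $\beta_1\cdots\beta_n$.

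For the monotonicity: if $m<n$ and $(c_i)\in\widetilde\uu_{q,m}'$, I would like to conclude $(c_i)\in\widetilde\uu_{q,n}'$, i.e. that a longer window is still trapped strictly between the longer truncations of $\overline{\alpha(q)}$ and $\alpha(q)$. Here the key point is that the length-$m$ strict inequality $c_{k+1}\cdots c_{k+m}<\alpha_1(q)\cdots\alpha_m(q)$ on \emph{every} window (not just one) upgrades to the length-$n$ strict inequality: if $c_{k+1}\cdots c_{k+n}\ge\alpha_1(q)\cdots\alpha_n(q)$ then in particular $c_{k+1}\cdots c_{k+m}\ge\alpha_1(q)\cdots\alpha_m(q)$, and equality here would force $c_{k+m+1}\cdots c_{k+n}\ge\alpha_{m+1}(q)\cdots\alpha_n(q)$, and iterating this along the shifted windows eventually contradicts one of the strict length-$m$ inequalities --- this is the standard lexicographic ``tail'' argument and is the one step requiring a little care. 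The dual inequality with $\overline{\alpha(q)}$ is symmetric. Thus the $\widetilde\uu_{q,n}'$ increase. The fact that $\widetilde\vv_{q,n}'$ and $\widetilde\ww_{q,n}'$ decrease is immediate and needs no such argument: a sequence whose every length-$n$ window is trapped (non-strictly) between two fixed words has, a fortiori, every length-$m$ window trapped between the corresponding length-$m$ truncations, for $m<n$. I expect this monotonicity of the strict family $\widetilde\uu_{q,n}'$ to be the only nonroutine point.
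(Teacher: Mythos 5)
Your proposal is correct and follows the same route as the paper, which simply records the finite forbidden-block characterizations and notes that the inclusions and monotonicity follow from the definition of lexicographic inequalities; you have merely written out those routine verifications. One small remark: in the increasingness of $\widetilde\uu_{q,n}'$ no iteration along shifted windows is needed, since a violating length-$n$ window $c_{k+1}\cdots c_{k+n}\ge\alpha_1(q)\cdots\alpha_n(q)$ truncates to $c_{k+1}\cdots c_{k+m}\ge\alpha_1(q)\cdots\alpha_m(q)$, which already contradicts the \emph{strict} length-$m$ inequality at that same window $k$.
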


\begin{proof}
It is clear that $\widetilde\uu_{q,n}'$ is characterized by the finite set of forbidden blocks $d_1\cdots d_n\in\set{0,\ldots,M}^n$ satisfying the lexicographic inequalities
\begin{equation*}
d_1\cdots d_n\le\overline{\alpha_1(q)\cdots\alpha_n(q)}\quad\text{or}\quad d_1\cdots d_n\ge \alpha_1(q)\cdots\alpha_n(q).
\end{equation*}
Hence it is a subshift of finite type.

The proof for $\widetilde\vv_{q,n}'$ and $\widetilde\ww_{q,n}'$ is analogous.

The remaining assertions follow from the definition of lexicographic inequalities.
\end{proof}

We are going to show  that these sets well approximate $\widetilde\uu_q'$:

\begin{proposition}\label{p28}
For $q\in[q', M+1]$ we have
\begin{equation*}
\lim_{n\to\infty}h(\widetilde\uu_{q,n}')=\lim_{n\to\infty}h(\widetilde\vv_{q,n}')=\lim_{n\to\infty}h(\widetilde\ww_{q,n}')=h(\widetilde\uu_q')=h(\widetilde\vv_q').
\end{equation*}
\end{proposition}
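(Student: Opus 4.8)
The plan is to establish the chain of equalities by exploiting the inclusions \eqref{21} together with the continuity properties of $\alpha(q)$ and $\beta(q)$, reducing everything to a statement about how the entropies of the approximating subshifts of finite type converge. From \eqref{21} and monotonicity of topological entropy under inclusion of subshifts we immediately get
\begin{equation*}
h(\widetilde\uu_{q,n}')\le h(\widetilde\uu_q')\le h(\widetilde\vv_q')\le h(\widetilde\vv_{q,n}')\le h(\widetilde\ww_{q,n}')
\end{equation*}
for every $n$, and the sequences on the two ends are monotone (increasing on the left, decreasing on the right) by the last assertion of Lemma \ref{l27}, hence convergent. So it suffices to prove that $\lim_n h(\widetilde\uu_{q,n}')=\lim_n h(\widetilde\ww_{q,n}')$; the three inner quantities are then squeezed between them and all five limits coincide.

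The key point is that $\widetilde\ww_{q,n}'$ is built from $\beta_1(q)\cdots\beta_n(q)$ while $\widetilde\uu_{q,n}'$ is built from (the strict version of) $\alpha_1(q)\cdots\alpha_n(q)$, and by Lemma \ref{l24}(i) these two words agree for all large $n$ unless $\beta(q)$ is finite — and in the finite case $\alpha(q)$ is periodic with a period determined by $\beta_1(q)\cdots\beta_m(q)$, so $\alpha_1(q)\cdots\alpha_n(q)$ and $\beta_1(q)\cdots\beta_n(q)$ still differ only in a controlled, bounded way near the end. First I would handle the generic case: when $\beta(q)$ is infinite, $\alpha(q)=\beta(q)$, so $\widetilde\ww_{q,n}'$ differs from $\widetilde\uu_{q,n}'$ only by allowing equality with the boundary words; a standard argument (the set of sequences that are eventually equal to $\overline{\alpha_1(q)\cdots\alpha_n(q)}$ or $\alpha_1(q)\cdots\alpha_n(q)$ on infinitely many consecutive blocks is itself of low complexity, or simply: $\widetilde\ww_{q,n}'\subseteq\widetilde\vv_{q,n+1}'\subseteq\widetilde\ww_{q,n+1}'$-type interleaving, plus the observation that $\bigcap_n\widetilde\ww_{q,n}'=\bigcap_n\widetilde\vv_{q,n}'=\widetilde\vv_q'$) shows the limits agree. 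More robustly, I would use the classical fact that for a decreasing sequence of subshifts $X_n\searrow X=\bigcap X_n$ one has $h(X_n)\to h(X)$ (this follows from \eqref{12}: $|B_k(X_n)|\to|B_k(X)|$ as $n\to\infty$ for each fixed $k$, since a word forbidden in $X$ is forbidden in $X_n$ for $n$ large, combined with $h(X_n)=\inf_k \log|B_k(X_n)|/k$). Applying this to $\widetilde\vv_{q,n}'\searrow\widetilde\vv_q'$ and to $\widetilde\ww_{q,n}'\searrow\widetilde\vv_q'$ (note $\bigcap_n\widetilde\ww_{q,n}'=\widetilde\vv_q'$ as well, because $\overline{\beta(q)}\le c_{m+i}\le\beta(q)$ for all $m$ forces $\overline{\alpha(q)}\le c_{m+i}\le\alpha(q)$ — one checks that a sequence lexicographically dominated by $\beta(q)$ at every shift is dominated by $\alpha(q)$ at every shift, using that $\alpha(q)$ is the quasi-greedy expansion) gives $\lim_n h(\widetilde\vv_{q,n}')=\lim_n h(\widetilde\ww_{q,n}')=h(\widetilde\vv_q')$.

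It then remains to show $\lim_n h(\widetilde\uu_{q,n}')=h(\widetilde\vv_q')$, i.e. that the increasing SFT approximations from inside also reach $h(\widetilde\vv_q')$. Here I would use that $\widetilde\uu_{q,n}'$ and $\widetilde\vv_{q,n}'$ differ only by the boundary words $\overline{\alpha_1(q)\cdots\alpha_n(q)}$ and $\alpha_1(q)\cdots\alpha_n(q)$: a word of length $kn$ lies in $B_{kn}(\widetilde\vv_{q,n}')\setminus B_{kn}(\widetilde\uu_{q,n}')$ only if at some position a length-$n$ block equals one of these two extremal words, and since $\alpha(q)$ is either infinite (whence for $q>q'$ the block $\alpha_1\cdots\alpha_n$ cannot be followed arbitrarily — in fact the relevant combinatorial input is that $\overline{\alpha(q)}<\sigma^k(\alpha(q))<\alpha(q)$-type admissibility, i.e. $\alpha(q)\in\widetilde\vv_q'$) the contribution of such boundary-touching words is negligible in the $k\to\infty$ limit; concretely $|B_{kn}(\widetilde\vv_{q,n}')|\le (k+1)\,|B_{kn}(\widetilde\uu_{q,n}')| \cdot C$ for a constant depending only on the alphabet, giving $h(\widetilde\vv_{q,n}')\le h(\widetilde\uu_{q,n}') + \tfrac{\log C}{n}+\tfrac{\log(k+1)}{kn}\to h(\widetilde\uu_{q,n}')$, hence $h(\widetilde\uu_{q,n}')=h(\widetilde\vv_{q,n}')$ for each $n$ (or at least the difference tends to $0$ as $n\to\infty$, which is all we need). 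Combining, all five limits equal $h(\widetilde\vv_q')$, and since $h(\widetilde\uu_q')$ is trapped between $\lim h(\widetilde\uu_{q,n}')$ and $h(\widetilde\vv_q')$ we also get $h(\widetilde\uu_q')=h(\widetilde\vv_q')$, completing the proof.

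The main obstacle I anticipate is the inside approximation $h(\widetilde\uu_{q,n}')\to h(\widetilde\vv_q')$: showing that forbidding, rather than merely discouraging, the extremal blocks $\alpha_1(q)\cdots\alpha_n(q)$ and its reflection does not drop the entropy. The clean way is the counting estimate sketched above — bounding the number of length-$kn$ admissible words of $\widetilde\vv_{q,n}'$ that touch a boundary block, using that between two consecutive occurrences of a boundary block there must be a genuinely interior block, so the ``touching'' words are controlled by $(k+1)$ times the interior count. I expect that making this bookkeeping precise (especially when $\alpha(q)$ is eventually periodic and boundary blocks can chain) is the only genuinely delicate part; everything else is monotonicity of entropy plus the elementary convergence $B_k(X_n)\to B_k(X)$ for decreasing SFTs, both of which are standard and may be cited from \cite{LindMarcus1995}.
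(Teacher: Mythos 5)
Your reduction via \eqref{21} to the two outer limits is sound, and your treatment of the outer (decreasing) approximations is a legitimate alternative to the paper's proof of \eqref{27}: since $\widetilde\vv_{q,n}'$ and $\widetilde\ww_{q,n}'$ are SFTs decreasing to $\widetilde\vv_q'$ (the identity $\bigcap_n\widetilde\ww_{q,n}'=\widetilde\vv_q'$ is true, though the comparison of domination by $\beta(q)$ with domination by $\alpha(q)$ that you only sketch does need the periodicity argument from Lemma \ref{l24}(i)), the standard fact $h(X_n)\to h\bigl(\bigcap X_n\bigr)$ for decreasing subshifts gives $\lim_n h(\widetilde\ww_{q,n}')=\lim_n h(\widetilde\vv_{q,n}')=h(\widetilde\vv_q')$ more cleanly than the paper's word count based on the eventual periodicity of $\alpha(q)$.

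The genuine gap is the inner approximation, i.e.\ the paper's inequality \eqref{28}. Your key estimate $|B_{kn}(\widetilde\vv_{q,n}')|\le C(k+1)\,|B_{kn}(\widetilde\uu_{q,n}')|$, and the conclusion $h(\widetilde\uu_{q,n}')=h(\widetilde\vv_{q,n}')$ for each fixed $n$, are false. Take $q=q'$: by \eqref{21} we have $\widetilde\uu_{q',n}'\subseteq\widetilde\uu_{q'}'$, so $h(\widetilde\uu_{q',n}')=0$ for every $n$; on the other hand, for any $p>q'$ with $\beta_i(p)=\beta_i(q')=\alpha_i(q')$ for $i\le n$ (such $p$ exist by Lemma \ref{l24}(iii)) one has $\widetilde\uu_p'\subseteq\widetilde\vv_{q',n}'$, whence $h(\widetilde\vv_{q',n}')\ge h(\widetilde\uu_p')>0$ for every fixed $n$. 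Thus $|B_{kn}(\widetilde\vv_{q',n}')|$ grows exponentially in $k$ while $|B_{kn}(\widetilde\uu_{q',n}')|$ does not, and no polynomial-in-$k$ comparison can hold; the difference of the two entropies tends to $0$ only as $n\to\infty$, and your proposal offers no mechanism for that beyond the failed bound. The heuristic ``between two occurrences of a boundary block there is an interior block'' does not save it: an admissible word of $\widetilde\vv_{q,n}'$ may contain on the order of $k$ occurrences of $u=\alpha_1(q)\cdots\alpha_n(q)$ or $\overline u$, and the number of interleavings is of order $\sum_r\binom{k}{r}2^r$, exponential in $k$. This is exactly why the paper, for $q\in\overline{\uu}$ with $q>q'$, invokes the Perron--Frobenius estimates of Lemma \ref{l212} with spectral radius $\lambda_n\ge\lambda_N>1$ to absorb the interior counts, obtaining only $|B_k(\widetilde\vv_{q,n}')|\le |B_k(\widetilde\uu_{q,n}')|\frac{c_2}{c_1}k^{3s}(1+2c_2\lambda_N^{-n})^k$, i.e.\ $h(\widetilde\vv_{q,n}')\le h(\widetilde\uu_{q,n}')+\log(1+2c_2\lambda_N^{-n})$, with an error that vanishes only in the limit $n\to\infty$; it treats $q=q'$ separately using $h(\widetilde\uu_{q'}')=0$ together with Lemma \ref{l210}(iii) (valid only along indices $n$ satisfying \eqref{23}), and $q\notin\overline{\uu}$ separately via the local constancy of the entropy (Lemma \ref{l211}). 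None of this machinery, nor any substitute for it, appears in your proposal, so the crucial step $\lim_n h(\widetilde\uu_{q,n}')=h(\widetilde\vv_q')$ remains unproved.
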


The proof of the proposition is divided into a series of lemmas.

\begin{lemma}\label{l29}
Let $q'\le q<p\le M+1$. Then
\begin{equation*}
\widetilde\ww_{q,n}'\subseteq\widetilde\uu'_{p,n}
\end{equation*}
for all sufficiently large $n$.
\end{lemma}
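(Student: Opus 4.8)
The plan is to exploit the continuity properties of greedy and quasi-greedy expansions (Lemma \ref{l24}) together with the fact that, for $p > q'$, the base-$p$ quasi-greedy expansion $\alpha(p)$ strictly dominates the base-$q$ greedy expansion $\beta(q)$ in a uniform, finite-window sense. Concretely, a sequence $(c_i)$ lies in $\widetilde\ww_{q,n}'$ precisely when every length-$n$ factor $c_{m+1}\cdots c_{m+n}$ satisfies $\overline{\beta_1(q)\cdots\beta_n(q)} \le c_{m+1}\cdots c_{m+n} \le \beta_1(q)\cdots\beta_n(q)$, while membership in $\widetilde\uu'_{p,n}$ requires the strict inequalities with $\alpha(p)$ in place of $\beta(q)$. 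So the inclusion $\widetilde\ww_{q,n}' \subseteq \widetilde\uu'_{p,n}$ holds as soon as, in the first $n$ digits,
\[
\overline{\alpha_1(p)\cdots\alpha_n(p)} < \overline{\beta_1(q)\cdots\beta_n(q)} \quad\text{and}\quad \beta_1(q)\cdots\beta_n(q) < \alpha_1(p)\cdots\alpha_n(p),
\]
and by the symmetry $c \mapsto \overline c$ these two conditions are equivalent; it suffices to establish the second one for all large $n$.

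First I would reduce to a statement about infinite expansions. Since $q < p \le M+1$, we have $\beta(q) \le \alpha(q) < \alpha(p)$, where the strict inequality $\alpha(q) < \alpha(p)$ is the standard strict monotonicity of the quasi-greedy expansion of $1$ in the base (valid on $[q', M+1]$; see the references cited after Lemma \ref{l24}). Because $\alpha(p)$ is an \emph{infinite} sequence and $\alpha(q) < \alpha(p)$ strictly, there is a first index $k$ with $\alpha_k(q) < \alpha_k(p)$; hence $\beta_1(q)\cdots\beta_k(q) \le \alpha_1(q)\cdots\alpha_k(q) < \alpha_1(p)\cdots\alpha_k(p)$ already as words of length $k$. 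Taking any $n \ge k$ and appending the remaining digits does not destroy a strict lexicographic inequality that is already decided at position $k$: we get $\beta_1(q)\cdots\beta_n(q) < \alpha_1(p)\cdots\alpha_n(p)$ for all $n \ge k$. Setting $n_0 := k$ gives the claim.

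The one point that needs care — and which I expect to be the main obstacle to stating cleanly — is justifying $\alpha(q) < \alpha(p)$ as a \emph{strict} inequality rather than merely $\le$. This is where $q \ge q'$ is used: on $(1, q')$ the relevant expansions can be eventually periodic in a way that collapses the gap, but on $[q', M+1]$ the quasi-greedy expansion of $1$ is known to be strictly increasing in the base (a lexicographic monotonicity statement that follows from the characterization of quasi-greedy expansions and is recorded in \cite{BaiocchiKomornik2007}, \cite{DeVriesKomornik2009}, \cite{DeVriesKomornik2010}). Granting this, the argument above is essentially a two-line lexicographic observation; the bookkeeping is only in making sure the index $k$ at which the strict inequality is first witnessed is then propagated to all larger $n$, which is immediate from the definition of lexicographic order. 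I would also remark, for use in the subsequent lemmas of the proof of Proposition \ref{p28}, that $n_0$ depends on $q$ and $p$ but not on anything else, which is all that is needed there.
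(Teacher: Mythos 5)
There is a genuine gap: your key inequality $\beta(q)\le\alpha(q)$ is backwards. By definition $\beta(q)$ is the lexicographically \emph{largest} expansion of $1$ and $\alpha(q)$ the largest \emph{infinite} one, so $\alpha(q)\le\beta(q)$ always, with strict inequality exactly when $\beta(q)$ is finite (Lemma \ref{l24}(i)): e.g.\ for $M=1$ and $q$ the Golden Ratio, $\beta(q)=110^\infty>(10)^\infty=\alpha(q)$. Consequently the chain $\beta(q)\le\alpha(q)<\alpha(p)$, and with it the step ``$\beta_1(q)\cdots\beta_k(q)\le\alpha_1(q)\cdots\alpha_k(q)<\alpha_1(p)\cdots\alpha_k(p)$'', is not valid, and your argument never establishes the inequality $\beta_1(q)\cdots\beta_n(q)<\alpha_1(p)\cdots\alpha_n(p)$ that the inclusion rests on. Note also that strict monotonicity of both maps is not where $q\ge q'$ matters: $r\mapsto\beta(r)$ and $r\mapsto\alpha(r)$ are strictly increasing on all of $(1,M+1]$, but neither monotonicity alone bridges the gap, since $\alpha(q)<\alpha(p)$ sits on the wrong side of $\beta(q)$ and $\beta(q)<\beta(p)$ sits on the wrong side of $\alpha(p)$.

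The fact you need, $\beta(q)<\alpha(p)$ as sequences (hence witnessed at some finite index), is true but requires one more idea, which is exactly what the paper supplies: since only countably many bases $r$ have $\beta(r)\ne\alpha(r)$ (these are the bases with finite greedy expansion), one can choose $r\in(q,p)$ with $\beta(r)=\alpha(r)$, and then strict monotonicity of each map separately gives $\beta(q)<\beta(r)=\alpha(r)<\alpha(p)$. Once that is in hand, the remainder of your argument --- fixing $n$ beyond the index at which the strict inequality is decided, propagating it to all length-$n$ windows of a sequence in $\widetilde\ww_{q,n}'$, and handling the lower bound by the reflection $c\mapsto\overline{c}$ --- is correct and coincides with the paper's proof. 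So the fix is local: replace the false comparison $\beta(q)\le\alpha(q)$ by the interposition of such an intermediate base $r$.
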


\begin{proof}
Since there are only countably many finite greedy expansions, the set
\begin{equation*}
\set{r\in(1,M+1]\ :\ \beta(r)\ne\alpha(r)}
\end{equation*}
is countable.
There exists therefore $r\in (q,p)$ such that $\beta(r)=\alpha(r)$, and then
\begin{equation*}
\beta(q)<\beta(r)=\alpha(r)<\alpha(p)
\end{equation*}
because the maps $r\mapsto\beta(r)$ and $r\mapsto\alpha(r)$ are strictly increasing by the definition of the greedy and quasi-greedy algorithms.

Fix a sufficiently large $n$ such that
\begin{equation*}
\alpha_1(p)\cdots\alpha_n(p)>\beta_1(q)\cdots\beta_n(q).
\end{equation*}
If $d=(d_i)\in\widetilde\ww_{q,n}'$, then
\begin{equation*}
d_{m+1}\cdots d_{m+n}\le \beta_1(q)\cdots\beta_n(q)<\alpha_1(p)\cdots\alpha_n(p)
\end{equation*}
and symmetrically
\begin{equation*}
d_{m+1}\cdots d_{m+n}\ge \overline{\beta_1(q)\cdots\beta_n(q)}> \overline{\alpha_1(p)\cdots\alpha_n(p)}
\end{equation*}
for all $m\geq 0$, i.e., $d\in\widetilde\uu_{p,n}'$.
\end{proof}

We  recall $\uu$ is the set of bases $q>1$ in which $x=1$ has a unique expansion, and  $\overline{\uu}$ is its closure.
Furthermore, we recall from \cite{KomornikLoreti2007} that $q\in\overline{\uu}$ if and only if
\begin{equation}\label{22}
\overline{\alpha_1(q)\alpha_2(q)\cdots}<\alpha_{k+1}(q)\alpha_{k+2}(q)\cdots\le \alpha_1(q)\alpha_2(q)\cdots
\end{equation}
for all $k\ge 0$.
Moreover, there exists infinitely many indices $n$ such that
\begin{equation}\label{23}
\overline{\alpha_1(q)\cdots\alpha_{n-k}(q)}<\alpha_{k+1}(q)\cdots\alpha_n(q)\le\alpha_1(q)\cdots\alpha_{n-k}(q)
\end{equation}
for all $0\le k\le n-1$.
In particular, $\alpha_n(q)>0$ for these indices.

\begin{lemma}\label{l210}
Let $q\in\overline{\uu}$ and $(\alpha_i)=\alpha(q)$.

\begin{enumerate}[\upshape (i)]
\item For each $n\ge 1$,  $B_n(\widetilde\vv_q')=B_n(\widetilde\vv_{q,n}')$ is the set of words $d_1\cdots d_n$ satisfying
\begin{equation}\label{24}
\overline{\alpha_1\cdots\alpha_{n-k}}\le d_{k+1}\cdots d_n\le \alpha_1\cdots\alpha_{n-k}
\end{equation}
for all $0\le k\le n-1$.
\item For each $n\ge 1$ satisfies \eqref{23},   $B_n(\widetilde\uu_{q,n}')$ is the set of words $d_1\cdots d_n$ satisfying
\begin{equation}\label{25}
\overline{\alpha_1\cdots\alpha_n}<d_1\cdots d_n<\alpha_1\cdots\alpha_n,
\end{equation}
and relations \eqref{24} for all $1\le k\le n-1$.
\item If $n\ge 1$ satisfying \eqref{23}, then
\begin{equation*}
B_n(\widetilde\vv_{q,n}')\setminus B_n(\widetilde\uu_{q,n}')=\set{\alpha _1(q)\dots \alpha _n(q), \overline{\alpha _1(q)\dots \alpha _n(q)}}.
\end{equation*}
\end{enumerate}
\end{lemma}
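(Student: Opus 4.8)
The plan is to prove all three parts from a single principle: each of the shifts $\widetilde\vv_q'$, $\widetilde\vv_{q,n}'$, $\widetilde\uu_{q,n}'$ has its language $B_n$ described by the displayed lexicographic inequalities, and in each case the nontrivial (sufficiency) direction is obtained by extending a word greedily, one digit at a time to the right, into an infinite sequence of the shift. I will use repeatedly the elementary fact that $u\le v$ with $|u|=|v|$ forces $u_1\cdots u_\ell\le v_1\cdots v_\ell$ for all $\ell$, and its mirror image. For (i), $B_n(\widetilde\vv_q')\subseteq B_n(\widetilde\vv_{q,n}')$ is immediate from \eqref{21}; if $d_1\cdots d_n$ occurs as a subword $c_{j+1}\cdots c_{j+n}$ of a point of $\widetilde\vv_{q,n}'$, applying the defining inequalities to the windows $c_{j+k+1}\cdots c_{j+k+n}$ $(0\le k\le n-1)$ and passing to their prefixes of length $n-k$ yields \eqref{24}. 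Conversely, given $d_1\cdots d_n$ satisfying \eqref{24}, I construct $c_1=d_1,\dots,c_n=d_n,c_{n+1},c_{n+2},\dots$ so that $c_1\cdots c_N$ satisfies \eqref{24} with $n$ replaced by $N$ at each stage; in the limit this produces a point of $\widetilde\vv_q'$ with the required prefix. At each step the digits $c_{N+1}$ preserving the invariant are exactly those with $c_{N+1}\ge\overline{\alpha_{N+1-k}}$ for every suffix $c_{k+1}\cdots c_N$ equal to $\overline{\alpha_1\cdots\alpha_{N-k}}$ and $c_{N+1}\le\alpha_{N+1-k'}$ for every suffix $c_{k'+1}\cdots c_N$ equal to $\alpha_1\cdots\alpha_{N-k'}$ (the empty suffix, $k=N$, always contributing $\overline{\alpha_1}\le c_{N+1}\le\alpha_1$). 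This interval is nonempty: if such suffixes of two different lengths both occur, the shorter one is a suffix of the longer, which forces $\alpha(q)$ to coincide with $\overline{\alpha(q)}$ on a sufficiently long initial segment after a suitable shift, and then \eqref{22} yields $\overline{\alpha_{N+1-k}}\le\alpha_{N+1-k'}$. Therefore the three sets in (i) coincide.

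For (ii) the inclusion $\subseteq$ is the same prefix argument: the full window $d_1\cdots d_n$ itself gives the strict inequalities \eqref{25}, and its proper subwindows give \eqref{24} for $1\le k\le n-1$. For the reverse inclusion I again extend $w=d_1\cdots d_n$ greedily, now with the stronger invariant that every length-$n$ window of $c_1\cdots c_N$ lies strictly between $\overline{\alpha_1\cdots\alpha_n}$ and $\alpha_1\cdots\alpha_n$ while every shorter suffix satisfies \eqref{24}. Besides the constraints on $c_{N+1}$ from part (i) there is now a strict constraint from the window $c_{N-n+2}\cdots c_{N+1}$ just completed: $c_{N+1}\le\alpha_n-1$ if $c_{N-n+2}\cdots c_N=\alpha_1\cdots\alpha_{n-1}$, and $c_{N+1}\ge\overline{\alpha_n}+1$ if $c_{N-n+2}\cdots c_N=\overline{\alpha_1\cdots\alpha_{n-1}}$. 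Showing that the interval for $c_{N+1}$ is still nonempty is the heart of the argument: conflicts among the two ``old'' types of constraint are excluded by \eqref{22} exactly as before; a conflict between the new upper constraint $c_{N+1}\le\alpha_n-1$ and an old lower constraint forces (since the suffix witnessing the latter is then a suffix of $\alpha_1\cdots\alpha_{n-1}$) an identity $\alpha_p\cdots\alpha_n=\overline{\alpha_1\cdots\alpha_{n-p+1}}$ for some $p$ with $2\le p\le n-1$, which contradicts the strict left inequality in \eqref{23} at index $p-1$; the symmetric conflict, the conflict between the two new constraints, and those involving the empty suffix are ruled out the same way using \eqref{23} (and $\alpha_n\ge1$, a consequence of \eqref{23} at index $n-1$). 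This is exactly where the hypothesis that $n$ satisfies \eqref{23} enters, and I expect this case analysis to be the main obstacle.

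Finally, (iii) follows by subtraction. By \eqref{22} the sequences $\alpha(q)$ and $\overline{\alpha(q)}$ lie in $\widetilde\vv_q'\subseteq\widetilde\vv_{q,n}'$, so $\alpha_1(q)\cdots\alpha_n(q)$ and $\overline{\alpha_1(q)\cdots\alpha_n(q)}$ lie in $B_n(\widetilde\vv_{q,n}')$; they are distinct since \eqref{23} at index $n-1$ gives $\alpha_1(q)+\alpha_n(q)>M$; and neither lies in $B_n(\widetilde\uu_{q,n}')$, each violating \eqref{25}. Conversely, combining the descriptions from (i) and (ii), any word in the difference satisfies \eqref{24} for $0\le k\le n-1$ but fails \eqref{25}, hence equals one of these two words. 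The inclusions and (iii) are routine; all the genuine work sits in the bookkeeping for the greedy extension in (ii).
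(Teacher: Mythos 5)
Your proposal is correct, but it follows a genuinely different construction from the paper's. For the sufficiency direction the paper does not extend digit by digit: it takes the \emph{first} index $k_1$ at which a suffix of $d_1\cdots d_n$ hits a boundary word (say $d_{k_1+1}\cdots d_n=\alpha_1\cdots\alpha_{n-k_1}$) and appends a single explicit tail in one shot — the shifted quasi-greedy sequence $\alpha_{n-k_1+1}\alpha_{n-k_1+2}\cdots$ in part (i), and the periodic word $(\alpha_{n-k_1+1}\cdots\alpha_{n-1}(\alpha_n-1)\,\alpha_1\cdots\alpha_{n-k_1})^\infty$ in part (ii) — so that the whole sequence becomes $d_1\cdots d_{k_1}\alpha_1\alpha_2\cdots$, respectively $d_1\cdots d_{k_1}(\alpha_1\cdots\alpha_{n-1}(\alpha_n-1))^\infty$; membership in $\widetilde\vv_q'$, resp.\ $\widetilde\uu_{q,n}'$, is then verified directly from \eqref{22}, resp.\ from the consequences of \eqref{23} that $\alpha_n>0$, $\alpha_{k+1}\cdots\alpha_{n-1}(\alpha_n-1)\ge\overline{\alpha_1\cdots\alpha_{n-k}}$ and $\alpha_1\cdots\alpha_k>\overline{\alpha_{n-k+1}\cdots\alpha_n}$. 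Your greedy one-digit-at-a-time completion reaches the same conclusion, and your identification of where \eqref{22} and \eqref{23} enter (the suffix-overlap identities and the contradiction with the strict left inequality of \eqref{23} at index $p-1$) is exactly right; the price is the conflict bookkeeping, which the paper's explicit tail avoids entirely, while your argument buys a more mechanical, choice-free scheme that does not require guessing a suitable periodic continuation. Two small points to tidy up if you write it out in full: you should also dispose of the degenerate case in which a suffix equals both $\alpha_1\cdots\alpha_{n-k}$ and $\overline{\alpha_1\cdots\alpha_{n-k}}$ simultaneously (this needs $\alpha_1=M/2$, which \eqref{22}, or in part (ii) the inequality $\overline{\alpha_1}<\alpha_n\le\alpha_1$ from \eqref{23}, rules out), and you should note explicitly that weak inequalities on all finite windows of the limit sequence do give the infinite-sequence inequalities defining $\widetilde\vv_q'$; both are routine. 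Your part (iii) is the same subtraction argument as the paper's.
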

\remove{We remark that Part (i) and its proof remain valid for $q\in\vv$.}

\begin{proof}
(i) Note that $B_n(\widetilde\vv_{q}')\subseteq B_n(\widetilde\vv_{q,n}')$, and that each word of $B_n(\widetilde\vv_{q,n}')$ satisfies the relations \eqref{24}.
It remains to prove that if a word $d_1\cdots d_n$ satisfies the relations \eqref{24}  for all $0\le k\le n-1$, then it belongs to $B_n(\widetilde\vv_q')$.

Let $0\le k_1\le n$ be the first integer such that either
\begin{equation*}
d_{k_1+1}\cdots d_n=\alpha_1\cdots\alpha_{n-k_1}
\end{equation*}
or
\begin{equation*}
d_{k_1+1}\cdots d_n= \overline{\alpha_1\cdots\alpha_{n-k_1}}.
\end{equation*}
Assume by symmetry that
\begin{equation}\label{26}
d_{k_1+1}\cdots d_n=\alpha_1\cdots\alpha_{n-k_1}
\end{equation}

The minimality of $k_1$ implies that
\begin{equation*}
\overline{\alpha_1\cdots\alpha_{n-k}}< d_{k+1}\cdots d_n<\alpha_1\cdots\alpha_{n-k} \quad\textrm{for any}~0\le k< k_1.
\end{equation*}
Combining this with \eqref{22} we conclude that
\begin{equation*}
d_1\cdots d_n\alpha_{n-k_1+1}\alpha_{n-k_1+2}\cdots= d_1\cdots d_{k_1}\alpha_{1}\alpha_2\cdots\in\widetilde\vv_q';
\end{equation*}
hence $d_1\cdots d_n\in B_n(\widetilde\vv_q')$.
\medskip

(ii) Take $n$ satisfying \eqref{23}, and note that each word of $B_n(\widetilde\uu_{q,n}')$ satisfies the above mentioned relations.
It remains to prove that if a word $d_1\cdots d_n$ satisfying \eqref{25}, and relations \eqref{24} for all $1\le k\le n-1$, then it belongs to $B_n(\widetilde\uu_{q,n}')$.

Choosing $k_1$ as in (i), now we have $k_1\ge 1$.
We may assume \eqref{26} again.
Using \eqref{23} it follows that $\alpha_n>0$ and
\begin{equation*}
\alpha_{k+1}\cdots\alpha_{n-1}\alpha_n^{-}\ge \overline{\alpha_1\cdots \alpha_{n-k}}\quad\text{and}\quad\alpha_1\cdots\alpha_k> \overline{\alpha_{n-k+1}\cdots \alpha_n}
\end{equation*}
for all $0\le k<n$, where we write $\alpha_n^-:=\alpha_n-1$.
Hence,
\begin{multline*}
d_1\cdots d_{n}(\alpha_{n-k_1+1}\cdots \alpha_{n-1}\alpha_{n}^-\alpha_1\cdots\alpha_{n-k_1})^\infty\\
=d_1\cdots d_{k_1}(\alpha_1\cdots\alpha_{n-1}\alpha_n^-)^\infty\in\widetilde\uu_{q,n}',
\end{multline*}
and therefore $d_1\cdots d_n\in B_n(\widetilde\uu_{q,n}')$.
\medskip

(iii) This follows from (i), (ii) and \eqref{22}.
\end{proof}

We also need the following lemma, where we use the set $\uuu$ defined in the introduction.

\begin{lemma}\label{l211}
If $p$ and $q$ belong to the same connected component of $(1,\infty)\setminus\uuu$, then $h(\uu_p')=h(\uu_q')$ and $h(\widetilde\uu_p')=h(\widetilde\uu_q')$.
\end{lemma}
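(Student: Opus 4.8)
The plan is to exploit the characterization \eqref{22} of $\overline{\uu}$ together with the monotonicity and continuity results for $\alpha(q)$ and $\beta(q)$ from Lemma \ref{l24}, and to pass from $\widetilde\uu_q'$ to the subshifts-of-finite-type approximations $\widetilde\uu_{q,n}'$ and $\widetilde\ww_{q,n}'$ from Lemma \ref{l27}, whose entropies converge to $h(\widetilde\uu_q')$ by Proposition \ref{p28}. By Lemma \ref{l25}(ii) it suffices to prove $h(\widetilde\uu_p')=h(\widetilde\uu_q')$, since then $h(\uu_p')=h(\widetilde\uu_p')=h(\widetilde\uu_q')=h(\uu_q')$. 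So fix a connected component $(p_-,p_+)$ of $(1,\infty)\setminus\overline{\uu}$ and take $p,q$ in it with, say, $p<q$.

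First I would reduce to showing that $h(\widetilde\uu_q')$ is \emph{constant} on each such component. Since $h(\widetilde\uu_{q,n}')\le h(\widetilde\uu_q')$ and $h(\widetilde\uu_q')\le h(\widetilde\ww_{q,n}')$ for every $n$ (from the inclusions \eqref{21} and monotonicity of entropy), and since both sides converge to $h(\widetilde\uu_q')$ by Proposition \ref{p28}, it is enough to compare $h(\widetilde\uu_{p,n}')$ and $h(\widetilde\ww_{q,n}')$ across the component. Here Lemma \ref{l29} does most of the work in one direction: for $p<q$ (note the lemma is stated for $q<p$, so applied with the roles reversed it gives) $\widetilde\ww_{p,n}'\subseteq\widetilde\uu_{q,n}'$ for all large $n$, hence $h(\widetilde\ww_{p,n}')\le h(\widetilde\uu_{q,n}')$, and letting $n\to\infty$ yields $h(\widetilde\uu_p')\le h(\widetilde\uu_q')$. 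Thus $h(\widetilde\uu_q')$ is \emph{nondecreasing} in $q$ quite generally, and the whole content of the lemma is the reverse inequality within a component.

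For the reverse inequality I would show $h(\widetilde\uu_q')\le h(\widetilde\uu_p')$ when $p<q$ lie in the same component of the complement of $\overline{\uu}$. The idea is that on such a component the left endpoint $p_-\in\overline{\uu}$ (or $p_-=1$), and the key structural fact is that the sequences $\alpha(r)$ for $r$ in the component do not "reach" the univoque condition, so the forbidden-block structure is governed by $\alpha(p_-)$. Concretely, I would use Lemma \ref{l210}: for $r\in\overline{\uu}$ the block sets $B_n(\widetilde\vv_{r,n}')$ and $B_n(\widetilde\uu_{r,n}')$ differ by exactly two words, so $h(\widetilde\vv_{r}')=h(\widetilde\uu_r')$. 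Then one shows that for $q$ in the component, $\widetilde\uu_q'\subseteq\widetilde\vv_{p_-}'$: indeed if $p_-=1$ this is trivial via Lemma \ref{l22}(ii), and if $p_-\in\overline{\uu}$ then $\alpha(q)\le\alpha(p_-)$ forces any sequence admissible for base $q$ to lie between $\overline{\alpha(p_-)}$ and $\alpha(p_-)$ at every shift, which is the defining condition of $\widetilde\vv_{p_-}'$. Hence $h(\widetilde\uu_q')\le h(\widetilde\vv_{p_-}')=h(\widetilde\uu_{p_-}')$, and combined with the already-established monotonicity $h(\widetilde\uu_{p_-}')\le h(\widetilde\uu_p')\le h(\widetilde\uu_q')$ (using that $p_-<p<q$ and the nondecreasing property, together with continuity of entropy from the left at $p_-$, which follows from Lemma \ref{l24}(ii) and the definition of $\widetilde\vv_{p_-,n}'$) we get equality throughout the component.

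The main obstacle I anticipate is the boundary behavior at $p_-$: one needs that $h(\widetilde\uu_{q}')$, as $q$ decreases toward $p_-$, does not jump down, i.e. $\lim_{q\searrow p_-}h(\widetilde\uu_q')=h(\widetilde\uu_{p_-}')$, and simultaneously that $h(\widetilde\uu_q')$ does not exceed $h(\widetilde\vv_{p_-}')$. The first is handled by approximating from above with $\widetilde\vv_{p_-,n}'$ (whose block counts depend only on $\alpha_1(p_-)\cdots\alpha_n(p_-)$, which by Lemma \ref{l24}(iii) is the limit of $\beta_1(q)\cdots\beta_n(q)$ as $q\searrow p_-$) and using Lemma \ref{l210}(i) to identify $B_n(\widetilde\vv_{p_-,n}')=B_n(\widetilde\vv_{p_-}')$. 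One must be slightly careful that $\overline{\uu}$ is closed (so the component is genuinely open and $p_-\in\overline{\uu}\cup\{1\}$) and that \eqref{22} holds at $p_-$; both are recalled in the text. Once these endpoint estimates are in place the inequalities chain together and the proof closes.
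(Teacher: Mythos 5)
Your reduction via Lemma \ref{l25}(ii), the monotonicity step (from Lemma \ref{l29} and the inclusions \eqref{21} one gets $h(\widetilde\uu_p')\le h(\widetilde\uu_q')$ for $p<q$ directly, without any limit), and the fact that $h(\widetilde\vv_r')=h(\widetilde\uu_r')$ for $r\in\uuu$ (via Lemma \ref{l210}(iii), taking the limit along indices $n$ satisfying \eqref{23}, as in the proof of Proposition \ref{p28}) are all sound. The proof breaks at the key step: you claim $\widetilde\uu_q'\subseteq\widetilde\vv_{p_-}'$ for $q$ in the component with left endpoint $p_-\in\uuu$ ``because $\alpha(q)\le\alpha(p_-)$''. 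That inequality is backwards: the map $r\mapsto\alpha(r)$ is \emph{strictly increasing}, so $\alpha(q)>\alpha(p_-)$ when $q>p_-$, and the defining strict inequalities of $\widetilde\uu_q'$ with respect to the \emph{larger} sequence $\alpha(q)$ do not formally imply the weak inequalities with respect to the \emph{smaller} sequence $\alpha(p_-)$. Whether the inclusion nevertheless holds is exactly the nontrivial ``plateau'' phenomenon: one must use the fine structure of the quasi-greedy expansions $\alpha(q)$ for $q$ inside the component $(p_-,p_+)$ (in particular their relation to the substitution-generated sequence $\alpha(p_+)$) to show that a tail exceeding $\alpha(p_-)$ forces a later tail to violate the lower bound. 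None of Lemmas \ref{l24}, \ref{l27}, \ref{l29}, \ref{l210} or Proposition \ref{p28} provides this, so your chain of inequalities does not close; this is a genuine gap, not a presentational one.

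Note also a circularity hazard in your first paragraph: you invoke Proposition \ref{p28} at a point $q$ of the component, i.e.\ at $q\notin\uuu$, but the paper's proof of Proposition \ref{p28} in precisely that case relies on Lemma \ref{l211}, the statement being proved. (Your argument can avoid this, since the monotonicity only needs the inclusions, but as written it leans on the circular case.) The missing structural input is what the paper imports from outside: its proof cites \cite[Theorem 1.7]{DeVriesKomornik2009}, which subdivides the component by points $q_0<q_1<\cdots\to q_0^*$ with $\uu_q'$ literally constant on each $(q_{n-1},q_n)$, and then the proof of \cite[Theorem 2.6]{KongLi2014} for the equalities $h(\uu_{q_n}')=h(\uu_{q_{n+1}}')$. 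To make your route work you would have to prove the inclusion $\widetilde\uu_q'\subseteq\widetilde\vv_{p_-}'$ (or an equivalent plateau statement) by a genuine argument of that kind; a one-line lexicographic comparison with the inequality reversed cannot substitute for it.
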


\begin{proof}
By Lemma \ref{l25} (ii) it suffices to prove the equalities $h(\uu_p')=h(\uu_q')$. 

Consider an arbitrary connected component $I=(q_0,q_0^*)$. 
We recall from \cite[Theorem 1.7]{DeVriesKomornik2009} that there exists a sequence $(q_n)$ satisfying $q_0<q_1<\cdots$ and converging to $q_0^*$, and such that
\begin{equation*}
\uu_q'=\uu_{q_n}'\quad\text{for all}\quad q\in (q_{n-1},q_n),\quad n=1,2,\ldots .
\end{equation*}

The remaining equalities $h\left( \uu_{q_n}'\right) =h\left( \uu_{q_{n+1}}'\right)$ were shown during the proof of \cite[Theorem 2.6]{KongLi2014}. 
\end{proof}

Finally we recall the Perron--Frobenius Theorem (see \cite[Theorem 4.4.4]{LindMarcus1995}):

\begin{lemma}\label{l212}
Let  $G(n)$ be an edge graph representation of $\widetilde\uu_{q,n}'$, and   $\lambda_n$  its spectral radius.
Then there exist  positive constants $c_1, c_2$ such that
\begin{equation*}
c_1 \lambda_n^k\le  |B_k(\widetilde\uu_{q,n}')|\le c_2 k^s\lambda_n^k
\end{equation*}
for all $k\ge 1$, where $s$ denotes the number of strongly connected components of $G(n)$.

If $G(n)$ is strongly connected, then the factor $k^s$ may be omitted in the second inequality.
\end{lemma}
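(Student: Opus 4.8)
The plan is to reduce the counting of admissible words to a count of paths in the finite graph $G(n)$ and then apply the Perron--Frobenius theory for non-negative matrices, which is precisely what the lemma is quoted for. First I would recall that in the graph presentation $G(n)$ of the subshift of finite type $\widetilde\uu_{q,n}'$ (whose forbidden blocks all have length $n$, by Lemma \ref{l27}), the words in $B_k(\widetilde\uu_{q,n}')$ of length $k\ge n-1$ correspond bijectively to the paths in $G(n)$ with $k-n+1$ edges; for the finitely many lengths $k<n-1$ the quantity $|B_k(\widetilde\uu_{q,n}')|$ is bounded, so it can be absorbed into the constants $c_1,c_2$. Writing $A$ for the adjacency matrix of $G(n)$, with spectral radius $\lambda_n$, we thus have $|B_k(\widetilde\uu_{q,n}')|=\mathbf 1^{\mathsf T}A^{k-n+1}\mathbf 1$, and the whole statement becomes an estimate for the sum of the entries of the powers of $A$. (Note $\lambda_n\ge 1$: since $\widetilde\uu_q'\subseteq\widetilde\uu_{q,n}'$ is non-empty for $q\ge q'$, the graph $G(n)$ carries an infinite path, hence a cycle.)

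For the upper bound I would order the vertices so that $A$ becomes block upper–triangular with diagonal blocks $A_1,\dots,A_s$ equal to the adjacency matrices of the strongly connected components of $G(n)$, so that $\lambda_n=\max_t\rho(A_t)$. Expanding $(A^m)_{ij}$ as a sum over paths and grouping each path by the (ordered) list of components it visits, a path meets at most $s$ components and makes only finitely many transitions between them; an induction on $s$, using inside each single component the bound $(A_t^m)_{ij}\le c\,m^{p_t-1}\rho(A_t)^m$ for an irreducible block of period $p_t$ (itself a standard consequence of Perron--Frobenius), gives $(A^m)_{ij}\le c\,m^{s}\lambda_n^{m}$ and hence $\mathbf 1^{\mathsf T}A^{m}\mathbf 1\le c_2' m^{s}\lambda_n^{m}$; passing back to $k=m+n-1$ only changes the constant.

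For the lower bound I would pick a strongly connected component $C$ of $G(n)$ with $\rho(A_C)=\lambda_n$; its adjacency matrix $A_C$ is irreducible, so by Perron--Frobenius it has a strictly positive right eigenvector $v$ with $A_Cv=\lambda_nv$. Then
$$v^{\mathsf T}A_C^{m}v=\lambda_n^{m}\,v^{\mathsf T}v\le (\max_i v_i)^2\,\mathbf 1^{\mathsf T}A_C^{m}\mathbf 1,$$
so $\mathbf 1^{\mathsf T}A_C^{m}\mathbf 1\ge c\,\lambda_n^{m}$ with $c=v^{\mathsf T}v/(\max_i v_i)^2>0$. Since every path inside $C$ is a path in $G(n)$, this yields $\mathbf 1^{\mathsf T}A^{m}\mathbf 1\ge c\,\lambda_n^{m}$, hence $|B_k(\widetilde\uu_{q,n}')|\ge c_1\lambda_n^{k}$ after absorbing $\lambda_n^{-(n-1)}$ into $c_1$. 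If $G(n)$ is strongly connected, then $s=1$ and the crude upper estimate can be sharpened: splitting the path lengths into residue classes modulo the period $p$ of $A$ (equivalently, using the eigenvector $v$ together with the two–sided bound $c'\lambda_n^{m}\le (A^{m})_{ij}\le c''\lambda_n^{m}$ valid within an irreducible aperiodic block) removes the factor $k^{s}$, so the second inequality holds without it.

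The main obstacle I anticipate is the bookkeeping in the upper bound, namely showing that the polynomial correction to the growth of the entries of $A^{m}$ has degree controlled by the number of strongly connected components — the genuinely optimal statement only involves the length of the longest chain, in the condensation of $G(n)$, of components all having spectral radius $\lambda_n$ — and then being content with the weaker but cleaner bound $k^{s}$ that the lemma asserts. By contrast the lower bound and the strongly connected case are short once the Perron--Frobenius eigenvector is in hand.
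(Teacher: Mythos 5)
Your proposal is correct in substance, but note that it proves something the paper never proves: the paper states this lemma purely as a recalled result, quoting it as the Perron--Frobenius theorem from \cite[Theorem 4.4.4]{LindMarcus1995}, with no argument given. What you have written is essentially the standard textbook proof of that cited fact: identify $|B_k(\widetilde\uu_{q,n}')|$ with a count of paths in $G(n)$, put the adjacency matrix $A$ into block upper-triangular (Frobenius normal) form along the strongly connected components, bound entries of powers inside each irreducible block via the positive Perron eigenvector, and decompose an arbitrary path according to the chain of components it visits; the lower bound comes from a component realizing $\rho(A_C)=\lambda_n$. The citation buys brevity; your route buys self-containedness and makes visible why the true polynomial correction is governed by chains of components of maximal spectral radius, of which the stated $k^s$ is a crude majorant.

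Two points to tighten if you carry this out. First, the bijection between $B_k(\widetilde\uu_{q,n}')$ and paths in $G(n)$ needs the presentation to be essential (every vertex on a bi-infinite path), i.e., locally admissible words must be globally admissible; pruning stranded vertices does not change $\lambda_n$, so this is harmless but should be said. Second, inside an irreducible block the correct estimate is $(A_t^m)_{ij}\le C\rho(A_t)^m$ with \emph{no} polynomial factor (take $v>0$ with $A_tv=\rho(A_t)v$ and compare entries); if you literally insert the factor $m^{p_t-1}$ as you wrote, the induction yields polynomial degree $s-1+\sum_t(p_t-1)$, which can exceed the exponent $s$ claimed in the lemma, so the stated conclusion would not follow from that version of the block bound. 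Similarly, in the strongly connected case only the upper entrywise bound $(A^m)_{ij}\le C\lambda_n^m$ is available (and is all that is needed) when the period exceeds one; the two-sided entrywise bound you invoke holds only in the aperiodic case.
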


\begin{proof}[Proof of Proposition \ref{p28}]
All indicated topological entropies are well defined by Lemmas \ref{l26} and \ref{l27}.
Furthermore, the monotonicity of the set sequences $(\widetilde\uu_{q,n}')$, $(\widetilde\vv_{q,n}')$ and $(\widetilde\ww_{q,n}')$ implies the existence of the indicated limits as $n\to\infty$.

If $q\in[q',M+1]\setminus\overline{\uu}$, then $q\in (q', M+1)$ (because $q', M+1\in\uu$).
Applying Lemma \ref{l211} we may choose a neighbourhood $(q_1, q_2)$ of $q$ such that  $h(\widetilde\uu_p')=h(\widetilde\uu_q')$ for all $p\in[q_1, q_2]$.
Using Lemmas \ref{l27} and \ref{l29} we obtain that
\begin{equation*}
\widetilde\uu_{q_1}'\subseteq\widetilde\uu_{q,n}'\subseteq\widetilde\ww_{q,n}'\subseteq\widetilde\uu_{q_2}'
\end{equation*}
for all sufficiently large indices $n$, and therefore
\begin{equation*}
\lim_{n\rightarrow\infty}h(\widetilde\uu_{q,n}')=\lim_{n\rightarrow\infty}h(\widetilde\ww_{q,n}')=h(\widetilde\uu_q').
\end{equation*}

Henceforth we assume that $q\in \overline{\uu}$.
In view of the inclusions \eqref{21} it is sufficient to prove  that
\begin{equation}\label{27}
\lim_{n\to\infty}h(\widetilde\ww_{q,n}')\le h(\widetilde\vv_q')
\end{equation}
and
\begin{equation}\label{28}
\lim_{n\to\infty}h(\widetilde\vv_{q,n}')\le \lim_{n\to\infty}h(\widetilde\uu_{q,n}').
\end{equation}

First we show that
\begin{equation*}
|B_n(\widetilde\ww_{q,n}')|\le 2(n+1)^2|B_n(\widetilde\vv_q')|
\end{equation*}
for all $n\ge 1$.
If $\alpha(q)=\beta(q)$, then $\widetilde\ww_{q,n}'=\widetilde\vv_{q,n}'$ and therefore by Lemma \ref{l210} we have  $B_n(\widetilde\ww_{q,n}')=B_n(\widetilde\vv_q')$ for all $n$.

If $\alpha(q)\ne\beta(q)$, then $\beta(q)$ has a last nonzero digit $\beta_m$, and by Lemma \ref{l24} $\alpha(q)$ is periodic with the period  $\beta_1(q)\cdots \beta_{m-1}(q)\beta^-_m(q)$.
In this case, if $d_1\cdots d_n\in B_n(\widetilde\ww_{q,n}')\setminus B_n(\widetilde\vv_{q}')$, then {for} any $0\le k\le n-1$
\begin{equation*}
\overline{\beta_1(q)\cdots\beta_{n-k}(q)}\le d_{k+1}\cdots d_n\le \beta_1(q)\cdots \beta_{n-k}(q),
\end{equation*}
and by Lemma \ref{l210}  it follows that there exists a least integer $0\le k\le n-1$ such that
either
\begin{equation*}
d_{k+1}\cdots d_n<{\overline{\alpha_1(q)\cdots\alpha_{n-k}(q)}}
\end{equation*}
or
\begin{equation*}
d_{k+1}\cdots d_n>\alpha_1(q)\cdots\alpha_{n-k}(q).
\end{equation*}
This implies that $d_{k+1}\cdots d_n$ or $\overline{d_{k+1}\cdots d_n}$ must be of the form
\begin{equation*}
(\alpha_1(q)\cdots \alpha_m(q))^j\beta_1(q)\cdots\beta_{n-k-mj}(q),\quad j=0,1,\cdots,[(n-k)/m].
\end{equation*}
The number of these words can not exceed $2(n+1)$.
Moreover, by the minimality of $k$ and Lemmas \ref{l27}, \ref{l210} it follows that
\begin{equation*}
d_1\cdots d_k\in B_k(\widetilde\vv_{q,k}')=B_k(\widetilde\vv_{q}')=B_k(\widetilde\vv_{q,n}').
\end{equation*}
Hence
\begin{multline*}
|B_n(\widetilde\ww_{q,n}')|-|B_n(\widetilde\vv_{q,n}')|=|B_n(\widetilde\ww_{q,n}')\setminus B_n(\widetilde\vv_{q,n}')|\\
\le 2(n+1)\sum_{k=0}^{n-1}|B_{k}(\widetilde\vv_{q,n}')|\le 2n(n+1) |B_n(\widetilde\vv_{q,n}')|,
\end{multline*}
and the required estimate follows.

Using this estimate we have
\begin{multline*}
h(\widetilde\ww_{q,n}')
=\inf_{k\ge 1}\frac{\log |B_k(\widetilde\ww_{q,n}')|}{k}\le \frac{\log| B_n(\widetilde\ww_{q,n}')|}{n}\\
\le \frac{\log |B_n(\widetilde\vv_q')|+\log2+2\log(n+1)}{n}.
\end{multline*}
Letting $n\to\infty$ the relation \eqref{27} follows.

Turning to the proof of the relation \eqref{28},  first we consider the case $q=q'$.
Using \eqref{21} and \eqref{27} it follows that
\begin{equation*}
\lim_{n\rightarrow\infty}h(\widetilde\vv_{q,n}')=h(\widetilde\vv_q').
\end{equation*}

Furthermore, we also deduce from \eqref{21} and Lemma \ref{l210} that
\begin{equation*}
|B_{n_k}(\widetilde\vv_q')\setminus B_{n_k}(\widetilde\uu_q')|\le |B_{n_k}(\widetilde\vv_{q,n_k})\setminus B_{n_k}(\widetilde\uu_{q,n_k}')|=2,
\end{equation*}
where $(n_k)$ is a sequence of indices satisfying \eqref{23}.
Hence
\begin{equation*}
\begin{split}
h(\widetilde\vv_q')
&=\lim_{n\rightarrow\infty}\frac{\log |B_n(\widetilde\vv_q')|}{n}=\lim_{k\rightarrow\infty}\frac{\log |B_{n_k}(\widetilde\vv_q')|}{n_k}\\
&\le \lim_{k\rightarrow\infty}\frac{\log \big(|B_{n_k}(\widetilde\uu_q')|+2\big)}{n_k}=h(\widetilde\uu_q').
\end{split}
\end{equation*}
The existence of the last limit and the last equality follows from Lemma \ref{l21}.

Since $h(\widetilde\uu_q')=0$ for $q= q'$ by Theorem \ref{t11}, we conclude that
\begin{equation*}
\lim_{n\rightarrow\infty}h(\widetilde\vv_{q,n}')=0.
\end{equation*}

Assume henceforth that $q>q'$, so that $h(\widetilde\uu_q')>0$. 
This was proved in \cite{GlendinningSidorov2001} for $M=1$, and the proof remains valid for all odd values of $M$, and in \cite[Lemma 4.10]{KongLiDekking2010} for $M=2,4,\ldots .$
For each $n\ge N$ we have $h(\widetilde\uu_{q,n}')=\log\lambda_n$ with the notations of Lemma \ref{l212}, and
\begin{equation*}
\lambda_n\ge \lambda_N>1
\end{equation*}
by the increasingness of the set sequence $(\widetilde\uu_{q,n}')$.
We are going to estimate the size of  $B_k(\widetilde\vv_{q,n}')\setminus B_k(\widetilde\uu_{q,n}')$ for each fixed $n\ge N$ satisfying \eqref{23}  and $k\ge n$.

Let us denote by $G'(n)$  the edge graph representing $\widetilde\vv_{q,n}'$, and set $u=\alpha_1(q)\cdots\alpha_n(q)$.
Then $G(n)$ is a subgraph of $G'(n)$, and the words $u$ and $\overline{u}$ are forbidden in $G(n)$.
We seek an upper bound for $|B_k(\widetilde\vv_{q,n}')\setminus B_k(\widetilde\uu_{q,n}')|$.

Suppose that $d_1\cdots d_k\in B_k(\widetilde\vv_{q,n}')\setminus B_k(\widetilde\uu_{q,n}')$. Then by Lemma \ref{l210} it follows that
the word $d_1\cdots d_k$ must contain at least once  $u$ or $\overline{u}$.
If it contains exactly $r\ge 1$ times $u$ or $\overline{u}$, then it has the form
\begin{equation*}
d_1\cdots d_k=\omega_0\tau_1\omega_1\cdots\tau_r\omega_r
\end{equation*}
where each $\tau_j$ is equal to $u$ or $\overline{u}$, and $k_0+\cdots+k_r=k-rn$, where $k_j\ge 0$ denotes the length of $\omega_j$.

Assuming first that the graph $G(n)$ is strongly connected, we may apply Lemma \ref{l212} without the factor $k^s$.
Assuming without loss of generality that $c_1\le 1\le c_2$, we obtain the following estimate:
\begin{align*}
|B_k(\widetilde\vv_{q,n}')|&\le |B_k(\widetilde\uu_{q,n}')|+\sum_{r=1}^{[k/n]}\sum_{k_0+\cdots+k_r=k-nr}2^r\prod_{j=0}^r(c_2\lambda_n^{k_j})\\
&= |B_k(\widetilde\uu_{q,n}')|+c_2\lambda_n^k\sum_{r=1}^{[k/n]}\sum_{k_0+\cdots+k_r=k-nr}(2c_2\lambda_n^{-n})^r\\
&=|B_k(\widetilde\uu_{q,n}')|+c_2\lambda_n^k\sum_{r=1}^{[k/n]}\binom{k-r(n-1)}{r}(2c_2\lambda_n^{-n})^r\\
&\le |B_k(\widetilde\uu_{q,n}')|+c_2\lambda_n^k\sum_{r=1}^k\binom{k}{r}(2c_2\lambda_n^{-n})^r\\
&\le |B_k(\widetilde\uu_{q,n}')|\frac{c_2}{c_1} \sum_{r=0}^k\binom{k}{r}(2c_2\lambda_n^{-n})^r\\
&=|B_k(\widetilde\uu_{q,n}')|\frac{c_2}{c_1}(1+2c_2\lambda_n^{-n})^k\\
&\le |B_k(\widetilde\uu_{q,n}')|\frac{c_2}{c_1}(1+2c_2\lambda_N^{-n})^k.
\end{align*}

If the graph $G(n)$ is not strongly connected, then we distinguish two cases:
\begin{itemize}
\item If $u$ and $\overline{u}$ belong to the same strongly connected component of $G'(n)$, then we have to change $c_2\lambda_n^{k_j}$  to $c_2k_j^s\lambda_n^{k_j}$ in the above estimate for $j=0$ and $j=r$.
\item If $u$ and $\overline{u}$ belong to different strongly connected components of $G'(n)$, then for each $d_1\cdots d_k$ there is an index $0\le r'\le r$ such that either $\tau_j=u\Longleftrightarrow j\le r'$ or $\tau_j=u\Longleftrightarrow j>r'$.
Then we may change the above factor $2^r$ to $r+1$, and we have to change $c_2\lambda_n^{k_j}$ to $c_2k_j^s\lambda_n^{k_j}$ for $j=0$, $j=r'$ and $j=r$.
\end{itemize}

Summarizing, we obtain in all cases the following estimate:
\begin{equation*}
|B_k(\widetilde\vv_{q,n}')|\le |B_k(\widetilde\uu_{q,n}')|\frac{c_2}{c_1}k^{3s}(1+2c_2\lambda_N^{-n})^k.
\end{equation*}
It follows that
\begin{multline*}
\frac{\log |B_k(\widetilde\vv_{q,n}')|}{k}\le \frac{\log |B_k(\widetilde\uu_{q,n}')|}{k}\\
+\frac{\log (c_2/c_1)}{k}+3s\frac{\log k}{k}+\log (1+2c_2\lambda_N^{-n})
\end{multline*}
for all $k\ge n$.
Letting $k\to\infty$ we conclude that
\begin{equation*}
h(\widetilde\vv_{q,n}')\le h(\widetilde\uu_{q,n}')+\log (1+2c_2\lambda_N^{-n})
\end{equation*}
for all $n\ge N$ satisfying \eqref{23}.
Since $\lambda_N>1$, taking $n$ satisfying \eqref{23} and letting $n\to\infty$ we get \eqref{28}.
\end{proof}

\section{Proof of Theorem \ref{t13}}\label{s3}

First we consider the cases $1<q<q'$ and $q\ge M+1$.

\begin{lemma}\label{l31}\mbox{}

\begin{enumerate}[\upshape (i)]
\item The formula \eqref{11} holds for $1<q<q'$ with $D(q)=h(\uu_q')=0$.
\item The formula \eqref{11} holds for all $q\ge M+1$ with  $h(\uu_q')=\log (M+1)$.
\end{enumerate}
\end{lemma}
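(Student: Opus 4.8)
The plan is to dispatch the two ranges separately, reducing each to results already established.

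For part~(i), let $1<q<q'$. Then Theorem~\ref{t11} gives $D(q)=0$, and Lemma~\ref{l22}~(ii) gives $h(\uu_q')=0$; since $\log q>0$, the identity \eqref{11} is the trivial equality $0=0/\log q$. Nothing further is needed.

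For part~(ii), I would split into the cases $q=M+1$ and $q>M+1$. In both, Lemma~\ref{l22}~(i) supplies $h(\uu_q')=\log(M+1)$, so \eqref{11} reduces to the assertion $D(q)=\dim_H\uu_q=\log(M+1)/\log q$. When $q=M+1$ this is $D(M+1)=1$, which is exactly Theorem~\ref{t11} (equivalently, $\uu_{M+1}$ differs from $[0,1]$ by a countable set), while $\log(M+1)/\log(M+1)=1$. When $q>M+1$, I recall from the proof of Lemma~\ref{l22} that $\uu_q'=\set{0,\ldots,M}^\infty$; since distinct first digits force distinct values (because $M/(q-1)<1$), every $x$ in the range has a unique expansion, so $\uu_q=\pi_q(\set{0,\ldots,M}^\infty)$ is precisely the attractor of the iterated function system $\set{f_j(x)=(x+j)/q\ :\ j=0,\ldots,M}$ on the interval $[0,M/(q-1)]$. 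All contraction ratios equal $1/q$, and the images $f_j([0,M/(q-1)])=[j/q,\,(j+M/(q-1))/q]$ are pairwise disjoint precisely because $M/(q-1)<1$, so the system satisfies the strong separation (in particular the open set) condition. Moran's theorem then yields $\dim_H\uu_q=s$ with $(M+1)q^{-s}=1$, i.e. $\dim_H\uu_q=\log(M+1)/\log q=h(\uu_q')/\log q$, as required.

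The only step demanding any care --- the nominal main obstacle --- is the case $q>M+1$: one must identify $\uu_q$ with the self-similar attractor and verify the separation condition. Both hinge on the single elementary inequality $M/(q-1)<1$, which is exactly the threshold at which overlaps between cylinders vanish. Everything else is a direct appeal to Theorem~\ref{t11} and Lemma~\ref{l22}.
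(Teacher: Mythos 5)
Your proposal is correct and follows essentially the same route as the paper: part (i) and the case $q=M+1$ via Theorem \ref{t11} (countable complement) together with Lemma \ref{l22}, and the case $q>M+1$ by identifying $\uu_q$ with the attractor of the self-similar system $\set{x\mapsto (x+j)/q}$ and applying the Moran/Hutchinson formula, with the separation coming from $M/(q-1)<1$. The only cosmetic difference is that the paper verifies compactness of $\uu_q$ and disjointness of the pieces directly from uniqueness of expansions before invoking Hutchinson, whereas you identify $\uu_q$ with $\pi_q(\set{0,\ldots,M}^{\infty})$ and check strong separation on the interval images; both amount to the same argument.
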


\begin{proof}
(i) We have shown in Lemma \ref{l22} that $h(\uu_q')=0$.
Since $\uu_q$ is countable (see the proof of Lemma \ref{l22}), we have also $D(q)=0$.
\medskip

(ii) We have shown in Lemma \ref{l22} that $h(\uu_q')=\log (M+1)$.

Since $[0,1]\setminus\uu_{M+1}$ and $\set{0,\ldots,M}^{\infty}\setminus\uu_{M+1}'$ are countable, we have $D(M+1)=1$ and $h\left(\uu_{M+1}'\right)=\log (M+1)$.

If $q>M+1$, then $\uu_q'=\set{0,\ldots,M}^{\infty}$, so that $h\left(\uu_q'\right)=\log (M+1)$, and $\uu_q$ is a self-similar set satisfying the relation
\begin{equation*}
\uu_q=\bigcup_{j=0}^{M}\left( \frac{j}{q}+\frac{1}{q}\uu_q\right).
\end{equation*}
The union is disjoint because each $x\in\uu_q$ has a unique expansion.

Observe that $\uu_q$ is a non-empty \emph{compact} set.
Indeed, it is bounded because $\uu_q\subseteq  [0,M/(q-1)]$.
It remains to show that it is closed, i.e, if $(x_k)\subset\uu_q$ converges to some real number $x$, then $x\in\uu_q$.

If two expansions $(a_i)$ and $(b_i)$ first differ at the $m$th position, then
\begin{equation*}
\abs{\sum_{i=1}^{\infty}\frac{a_i}{q^i}-\sum_{i=1}^{\infty}\frac{b_i}{q^i}}\ge \frac{1}{q^m}-\sum_{i=m+1}^{\infty}\frac{M}{q^i}=\frac{q-M-1}{q^m(q-1)}>0.
\end{equation*}
Using this estimate we obtain that the expansion of $x_k$ converges com\-po\-nent-wise to some sequence $(c_i)$, and that $(c_i)$ is the (necessarily unique) expansion of $x$.

Applying \cite{Hutchinson1981} (see also \cite[Proposition 9.7]{Falconer2003}) we conclude that $r:=D(q)$ is the solution of the equation $(M+1)q^{-r}=1$, yielding
\begin{equation*}
D(q)=\frac{\log (M+1)}{\log q}.\qedhere
\end{equation*}
\end{proof}

In view of Theorem \ref{t11} and Lemma \ref{l31} it remains to investigate the dimension function
\begin{equation*}
D(q)=\dime_H\uu_q=\dim_H\widetilde\uu_q
\end{equation*}
for $q'\le q\le M+1$.

\begin{lemma}\label{l32}
Let $q\in [q',M+1)$.
There exists a positive integer $n(q)$ and a real number $\eps(q)>0$ such that
\begin{equation*}
\dim_H\pi_p(\widetilde\uu_{q,n}')=\frac{h(\widetilde\uu_{q,n}')}{\log p}
\quad\text{and}\quad
\dim_H\pi_p(\widetilde\vv_{q,n}')=\frac{h(\widetilde\vv_{q,n}')}{\log p}
\end{equation*}
for all $n\ge n(q)$ and $p\in (q-\eps(q),q]$.
\end{lemma}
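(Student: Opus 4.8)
The plan is to observe that $\widetilde\uu_{q,n}'$ and $\widetilde\vv_{q,n}'$ are subshifts of finite type (Lemma \ref{l27}), so for a fixed base $p$ the set $\pi_p(\widetilde\uu_{q,n}')$ is a graph-directed self-similar set; its Hausdorff dimension would be $h(\widetilde\uu_{q,n}')/\log p$ if the graph-directed construction satisfied the open set condition, equivalently if the similitudes making up $\pi_p(\widetilde\uu_{q,n}')$ overlapped ``thinly enough''. The overlaps are governed by whether distinct admissible words $d_1\cdots d_k$, $e_1\cdots e_k$ of the subshift can produce the same point, i.e.\ whether $\sum (d_i-e_i)p^{-i}$ can vanish or be small; this is exactly the phenomenon that fails at $p=M+1$ (where $\widetilde\uu_{q,n}'$ is close to the full shift and the pieces tile an interval) but that one expects to hold for $p$ slightly below $q\le M+1$. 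So the whole point is a separation estimate that degrades continuously as $p$ increases but is strict for $p$ in a left-neighbourhood of $q$.

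First I would fix $q\in[q',M+1)$ and note that $\widetilde\uu_{q,n}'$ is a subshift of finite type determined by forbidden blocks of length $n$; pass to its $n$-block presentation, so that it becomes a $1$-step (edge) shift on an alphabet $A_n\subseteq\set{0,\dots,M}^n$, with an associated edge graph $G(n)$ as in Lemma \ref{l212}. Then $\pi_p(\widetilde\uu_{q,n}')$ is the attractor of the graph-directed iterated function system whose maps are $x\mapsto (w+x)/p^{\,n}$ indexed by the edges (blocks) $w$. The upper bound $\dim_H\pi_p(\widetilde\uu_{q,n}')\le h(\widetilde\uu_{q,n}')/\log p$ is automatic: the natural covering of $\pi_p$ by the $|B_{kn}(\widetilde\uu_{q,n}')|$ cylinder intervals of length $\asymp p^{-kn}$ gives, via \eqref{12}, the bound $h(\widetilde\uu_{q,n}')/\log p$, with no hypothesis on $p$. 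For the lower bound I would verify that for all sufficiently large $n$ and all $p$ in a suitable interval $(q-\eps(q),q]$ the construction satisfies the open set condition: choose the open set to be an interval slightly larger than $[0,M/(q-1)]$, and check that the images of this interval under the defining similitudes, indexed by admissible length-$n$ blocks, are pairwise disjoint. Since $q<M+1$ there is a genuine gap between consecutive cylinders at level $n$ when the base equals $q$ — quantitatively, if $w\ne w'$ are admissible blocks then $|\pi_q(w 0^\infty)-\pi_q(w'0^\infty)|\ge q^{-n}-M/(q^{n}(q-1))=(q-M-1+\text{correction})/\cdots$ is bounded below by a constant depending only on $q$ once $n$ is large (using that admissible blocks avoid being too close to $\alpha_1(q)\cdots\alpha_n(q)$, hence to $M^n$). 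This separation at $p=q$ persists, with a slightly smaller constant, for all $p$ in a left-neighbourhood $(q-\eps(q),q]$, by continuity of $p\mapsto \pi_p$ on finitely many blocks; that fixes $\eps(q)$. Then Mauldin--Williams (or \cite{Hutchinson1981}, \cite{Falconer2003}) applied to the graph-directed system gives $\dim_H\pi_p(\widetilde\uu_{q,n}')=h(\widetilde\uu_{q,n}')/\log p$. The identical argument, with $\widetilde\vv_{q,n}'$ in place of $\widetilde\uu_{q,n}'$, handles the second equality, since $\widetilde\vv_{q,n}'$ is also a subshift of finite type with the same kind of forbidden-block description and its cylinders enjoy the same separation (the defining inequalities differ only by allowing equality at the endpoints, which does not destroy the gap for $p<M+1$).

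The main obstacle is making the open set condition rigorous uniformly in $n$: one must be sure the lower separation constant between level-$n$ cylinders does not shrink to $0$ as $n\to\infty$, which is why the statement only claims existence of a threshold $n(q)$ rather than all $n$, and why $\eps(q)$ is allowed to depend on $q$. The key point to nail down is that admissible blocks of $\widetilde\uu_{q,n}'$, by Lemma \ref{l210} (or directly from the defining inequalities), are bounded away from $\alpha_1(q)\cdots\alpha_n(q)$ — more precisely, the lexicographically largest admissible block $d_1\cdots d_n$ satisfies $d_1\cdots d_n<\alpha_1(q)\cdots\alpha_n(q)$, and since $q<M+1$ the tail $\alpha(q)$ stays lexicographically below $M^\infty$ in a quantitatively controlled way, so the first point $\pi_q$ of any cylinder lies at distance $\ge c(q)\,p^{-n}$ from the right endpoint of the ambient interval for some $c(q)>0$ independent of $n$. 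Once that uniform gap is in hand, disjointness of the enlarged interval's images and hence the open set condition follow, and the rest is the standard graph-directed dimension formula. I would also remark, as the paper surely intends to use downstream, that this lemma upgrades the entropy identities of Proposition \ref{p28} into genuine dimension statements for the approximating subshifts, which together with monotonicity will pass to $\widetilde\uu_q$ in the proof of Theorem \ref{t13}.
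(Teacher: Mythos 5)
Your overall skeleton (subshift of finite type $\to$ graph-directed construction $\to$ Mauldin--Williams, with the easy covering upper bound and with $\eps(q)$ coming from stability of the first $n$ digits of $\alpha$) is the same as the paper's, but the step that carries all the content --- the separation condition --- is not correct as you propose it. Checking that the images of an interval slightly larger than $[0,M/(q-1)]$ under the maps indexed by admissible length-$n$ blocks are pairwise disjoint cannot work: two admissible blocks that differ only in the last digit by $1$ give images whose base points differ by exactly $p^{-n}$, while each image has length at least $\frac{M}{(q-1)p^{n}}>p^{-n}$ when $q<M+1$, so these images always overlap, no matter how large $n$ is. Your quantitative gap estimate has the same defect: $q^{-n}-M/(q^{n}(q-1))$ is negative for $q<M+1$, and the remedy you suggest (admissible blocks being bounded away from $\alpha_1(q)\cdots\alpha_n(q)$, hence from $M^n$) addresses the wrong quantity --- the distance between base points of adjacent cylinders stays $p^{-n}$ and does not improve with $n$. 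What must be controlled is not the spacing of cylinder base points but the \emph{diameter of the set of admissible tails}: one needs $\pi_p(c)<1+\pi_p(d)$ for all admissible tails $c,d$, i.e.\ the spread of tail values is less than one digit gap, and this is false for unrestricted tails (whose spread is $M/(p-1)>1$), so it genuinely requires the lexicographic restrictions.

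This is exactly where the paper works: with $N$ the first index such that $\alpha_N(q)<M$, every length-$N$ window of an admissible sequence is at most $M^{N-1}(M-1)$, whence $\pi_p(c)\le \frac{M}{p-1}-\frac{1}{p^N-1}$; choosing $n$ with $q^{n-N}(q-1)>M$ and $p$ close enough to $q$ that $p^{n-N}(p-1)>M$ and $\alpha_i(p)=\alpha_i(q)$ for $i\le n$, one gets
\begin{equation*}
\pi_p(c)<\pi_p(M^n0^\infty)=\pi_p(\alpha_1(q)\cdots\alpha_n(q)0^\infty)+\pi_p(\overline{\alpha_1(q)\cdots\alpha_n(q)}\,0^\infty)<\pi_p(\alpha(p))+\pi_p(d)=1+\pi_p(d),
\end{equation*}
using $\pi_p(\alpha(p))=1$. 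This yields the strong separation of the graph-directed pieces $f_{\mathbf{uv}}(K_{\mathbf v})$ themselves (not of images of an ambient interval), after which Mauldin--Williams gives the dimension formula; the same computation with $\beta$ in place of $\alpha$ handles the right-neighbourhood analogue. Without an estimate of this kind --- identifying $N$, imposing $p^{n-N}(p-1)>M$, and exploiting $\pi_p(\alpha(p))=1$ via $\alpha_i(p)=\alpha_i(q)$ for $i\le n$ --- your lower bound is unsupported, so the proposal has a genuine gap at its central step.
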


\begin{proof}
The two cases being similar, we consider only that of $\widetilde\vv_{q,n}'$.

Let $N$ be the smallest index satisfying $\alpha_N(q)<M$, and fix $n>N$ such that $q^{n-N}(q-1)>M$.
Let $p\in (q',q]$ be sufficiently close to $q$ such that
\begin{equation*}
p^{n-N}(p-1)>M\quad\text{and}\quad \alpha_i(p)=\alpha_i(q),\quad i=1,\ldots, n.
\end{equation*}

We know already that $\widetilde\vv_{q,n}'$ is  a subshift of finite type corresponding to the finite set $F_n$ of forbidden blocks $d_1\cdots d_n\in\set{0,\ldots,M}^n$ satisfying one of the lexicographic inequalities
\begin{equation*}
d_1\cdots d_n<\overline{\alpha_1(q)\cdots\alpha_n(q)}\quad\text{and}\quad d_1\cdots d_n>\alpha_1(q)\cdots\alpha_n(q).
\end{equation*}
We finish the proof by  showing that $\pi_p(\widetilde\vv_{q,n}')$ is a graph-directed set satisfying the {strong separation condition}: then we may conclude by using the results of  Mauldin and Williams \cite{MauldinWilliams1988}.
We argue similarly to \cite[Lemma 6.4]{KongLi2014}.

Let us denote by $\mathbf{G}=(G,V,E)$ the edge graph with the vertex set
\begin{equation*}
V:=B_{n-1}(\widetilde\vv_{q,n}')=\set{d_1\cdots d_{n-1}\in\set{0,\ldots,M}^{n-1}\ :\ d\in\widetilde\vv_{q,n}'}.
\end{equation*}
For two vertices $\mathbf{u}=u_1\cdots u_{n-1}$ and $\mathbf{v}=v_1\cdots v_{n-1}$ we draw an edge $\mathbf{uv}\in E$ from $\mathbf{u}$ to $\mathbf{v}$ and label it $\ell_{\mathbf{uv}}=u_1$ if
\begin{equation*}
u_2\cdots u_{n-1}=v_1\cdots v_{n-2}\quad\text{and}\quad u_1\cdots u_{n-1}v_{n-1}\notin F_n.
\end{equation*}
Then the edge graph $\mathbf{G}=(G,V,E)$ is a representation of $\widetilde\vv_{q,n}'$ (see \cite{LindMarcus1995}).

For $\mathbf{u}=u_1\cdots u_{n-1}\in V$ we set
\begin{multline*}
K_{\mathbf{u}}:=\Bigl\{\sum_{i=1}^\infty\frac{d_i}{p^i}\ :\  d_i=u_i\text{ for }i=1,\ldots,n-1,\\
\text{and } d_{m+1}\cdots d_{m+n}\notin{F_n} \text{ for all }m\ge 0\Bigr\}.
\end{multline*}
For each edge $\mathbf{uv}\in{E}$ with vertices
\begin{equation*}
\mathbf{u}=u_1\cdots u_{n-1},\quad \mathbf{v}=v_1\cdots v_{n-1}
\end{equation*}
we define
\begin{equation*}
f_{\mathbf{uv}}(x):=\frac{x+\ell_{uv}}{p}=\frac{x+u_1}{p}.
\end{equation*}
Then one can verify that
\begin{equation*}
\pi_p(\widetilde\vv_{q,n}')=\bigcup_{\mathbf{u}\in V}K_{\mathbf{u}}=\bigcup_{\mathbf{u}\in V}\bigcup_{\mathbf{uv}\in {E}}f_{\mathbf{uv}}(K_\mathbf{v}),
\end{equation*}
so that $\pi_p(\widetilde\vv_{q,n}')$ is a graph-directed set (see \cite{MauldinWilliams1988}).

It remains to show that
\begin{equation*}
f_{\mathbf{uv}}(K_\mathbf{v})\cap f_{\mathbf{uv'}}(K_{\mathbf{v'}})=\emptyset
\end{equation*}
for all $\mathbf{uv}, \mathbf{uv'}\in E$ with $\mathbf{v}\ne\mathbf{v'}$.

Let $\mathbf{uv}, \mathbf{uv'}$ be two such edges in $E$ with
\begin{equation*}
\mathbf{u}=u_1\cdots u_{n-1},\quad \mathbf{v}=v_1\cdots v_{n-1}\quad\text{and}\quad \mathbf{v'}=v_1'\cdots v_{n-1}'.
\end{equation*}
Then
\begin{equation*}
v_1\cdots v_{n-2}=u_2\cdots u_{n-1}=v'_1\cdots v_{n-2}'.
\end{equation*}
Assume that $v_{n-1}<v_{n-1}'$.
Then it suffices to show that for any
\begin{equation*}
x=\pi_p(v_1\cdots v_{n-1}c_1 c_2\cdots)\in K_\mathbf{v},\quad y=\pi_p(v_1'\cdots v_{n-1}' d_1d_2\cdots)\in K_{\mathbf{v}'}
\end{equation*}
we have $f_{\mathbf{uv}}(x)<f_{\mathbf{uv}'}(y)$, i.e.,
\begin{equation*}
\sum_{i=1}^{n-1}\frac{u_i}{p^i}+\frac{v_{n-1}}{p^n}+\frac{1}{p^n}\sum_{i=1}^\infty\frac{c_i}{p^i}<\sum_{i=1}^{n-1}\frac{u_i}{p^i}+\frac{v_{n-1}'}{p^n}
+\frac{1}{p^n}\sum_{i=1}^\infty\frac{d_i}{p^i}.
\end{equation*}
This is equivalent to the inequality
\begin{equation*}
\pi_p(c)<v_{n-1}'-v_{n-1}+\pi_p(d).
\end{equation*}
This follows from our choice of $N$ and $p$ at the beginning of the proof.
Indeed, using the relations
\begin{equation*}
\alpha_{k+1}(q)\cdots\alpha_{k+N}(q)\le M^{N-1}(M-1)\quad k=0, 1,2,\ldots
\end{equation*}
we have
\begin{equation*}
\begin{split}
\pi_p(c)&\le\pi_p\big(( M^{N-1}(M-1) )^\infty\big)=\frac{M}{p-1}-\frac{1}{p^N-1}\\
&<\frac{M}{p-1}-\frac{M}{p^{n}(p-1)}=\pi_p(M^n 0^\infty)\\
&=\pi_p(\alpha_1(q)\cdots\alpha_n(q)\; 0^\infty)+\pi_p(\overline{\alpha_1(q)\cdots \alpha_n(q)}\; 0^\infty)\\
&<\pi_p(\alpha(p))+\pi_p(d)=1+\pi_p(d).\qedhere
\end{split}
\end{equation*}
\end{proof}

\begin{lemma}\label{l33}
Let $q\in[q',M+1)$.
There exists a positive integer $n(q)$ and a real number $\eps(q)>0$ such that
\begin{equation*}
\dim_H\pi_p(\widetilde\uu_{q,n}')=\frac{h(\widetilde\uu_{q,n}')}{\log p}
\quad\text{and}\quad
\dim_H\pi_p(\widetilde\ww_{q,n}')=\frac{h(\widetilde\ww_{q,n}')}{\log p}
\end{equation*}
for all $n\ge n(q)$ and $p\in [q,q+\eps(q))$.
\end{lemma}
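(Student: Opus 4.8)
The plan is to follow the proof of Lemma~\ref{l32} almost verbatim: the only changes are that $p$ now approaches $q$ from \emph{above} rather than from below, and that the roles of $\widetilde\vv_{q,n}'$ and of $\alpha(q)$ are taken over by $\widetilde\ww_{q,n}'$ and $\beta(q)$. First I fix the parameters. Let $N$ be the least index with $\beta_N(q)<M$; such an $N$ exists because $q<M+1$, and then $\beta_1(q)\cdots\beta_{N-1}(q)=M^{N-1}$ with $\beta_N(q)\le M-1$. Choose $n(q)>N$ so large that $q^{\,n(q)-N}(q-1)>M$. Since the greedy map is right-continuous (Lemma~\ref{l24}(iii)) and the digits are integers, the word $\beta_1(p)\cdots\beta_{n(q)}(p)$ --- which is non-decreasing in $p$ and tends to $\beta_1(q)\cdots\beta_{n(q)}(q)$ as $p\searrow q$ --- is constant on some interval $[q,q+\eps(q))$, on which, after shrinking $\eps(q)$, we also have $p<M+1$ and $p^{\,n(q)-N}(p-1)>M$. (This is the exact counterpart of the appeal to the left-continuity of $\alpha(\cdot)$ in Lemma~\ref{l32}.) All three properties persist when $n(q)$ is replaced by any $n\ge n(q)$, so from now on $n\ge n(q)$ and $p\in[q,q+\eps(q))$ are fixed.

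Exactly as in Lemma~\ref{l32} I realize $\pi_p(\widetilde\ww_{q,n}')$ as a graph-directed set: the edge graph $\mathbf G=(G,V,E)$ has vertex set $V=B_{n-1}(\widetilde\ww_{q,n}')$, an edge $\mathbf{uv}$ labelled $u_1$ whenever $u_2\cdots u_{n-1}=v_1\cdots v_{n-2}$ and $u_1\cdots u_{n-1}v_{n-1}$ is admissible for $\widetilde\ww_{q,n}'$, and similarity maps $f_{\mathbf{uv}}(x)=(x+u_1)/p$, all of ratio $1/p$; then $\pi_p(\widetilde\ww_{q,n}')=\bigcup_{\mathbf u\in V}\bigcup_{\mathbf{uv}\in E}f_{\mathbf{uv}}(K_{\mathbf v})$ with the obvious sets $K_{\mathbf u}$. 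Once the strong separation condition $f_{\mathbf{uv}}(K_{\mathbf v})\cap f_{\mathbf{uv'}}(K_{\mathbf{v'}})=\emptyset$ for $\mathbf v\ne\mathbf v'$ is checked, the theorem of Mauldin and Williams \cite{MauldinWilliams1988}, together with the Perron--Frobenius relation $h(\widetilde\ww_{q,n}')=\log\lambda$ (Lemma~\ref{l212}, applied to $G$, $\lambda$ the spectral radius), gives $\dim_H\pi_p(\widetilde\ww_{q,n}')=\log\lambda/\log p=h(\widetilde\ww_{q,n}')/\log p$. The assertion for $\widetilde\uu_{q,n}'$ then comes for free: by \eqref{21} we have $\widetilde\uu_{q,n}'\subseteq\widetilde\ww_{q,n}'$, so its edge graph is a subgraph of $G$ with the same maps and with the sets $K_{\mathbf u}$ only shrinking; hence the strong separation condition is inherited, and a second application of Mauldin--Williams yields $\dim_H\pi_p(\widetilde\uu_{q,n}')=h(\widetilde\uu_{q,n}')/\log p$.

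It remains to verify the strong separation condition for $\widetilde\ww_{q,n}'$, and here the computation parallels Lemma~\ref{l32}. If $\mathbf{uv},\mathbf{uv'}\in E$ with $v_1\cdots v_{n-2}=v_1'\cdots v_{n-2}'$ and, say, $v_{n-1}<v_{n-1}'$, then for $x=\pi_p(v_1\cdots v_{n-1}c)\in K_{\mathbf v}$ and $y=\pi_p(v_1'\cdots v_{n-1}'d)\in K_{\mathbf{v'}}$ the inequality $f_{\mathbf{uv}}(x)<f_{\mathbf{uv'}}(y)$ reduces to $\pi_p(c)<(v_{n-1}'-v_{n-1})+\pi_p(d)$, and since $v_{n-1}'-v_{n-1}\ge1$ it suffices to prove $\pi_p(c)<1+\pi_p(d)$. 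Here $c$ and $d$ are tails of sequences of $\widetilde\ww_{q,n}'$, so $c_{m+1}\cdots c_{m+n}\le\beta_1(q)\cdots\beta_n(q)\le M^{N-1}(M-1)M^{n-N}$ and $d_{m+1}\cdots d_{m+n}\ge\overline{\beta_1(q)\cdots\beta_n(q)}$ for all $m\ge0$; in particular every block of $N$ consecutive digits of $c$ is $\le M^{N-1}(M-1)$, i.e. $\overline c$ has no $N$ consecutive zeros, whence $\pi_p(\overline c)\ge\pi_p((0^{N-1}1)^\infty)=\tfrac1{p^N-1}$. Therefore
\[
\pi_p(c)\le\pi_p\big((M^{N-1}(M-1))^\infty\big)=\frac{M}{p-1}-\frac{1}{p^N-1}<\frac{M}{p-1}-\frac{M}{p^n(p-1)}=\pi_p(M^n0^\infty),
\]
the strict inequality coming from $p^{\,n-N}(p-1)>M$. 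The last term equals $\pi_p(\beta_1(q)\cdots\beta_n(q)0^\infty)+\pi_p(\overline{\beta_1(q)\cdots\beta_n(q)}0^\infty)$; since $\beta_i(p)=\beta_i(q)$ for $i\le n$ the word $\beta_1(q)\cdots\beta_n(q)$ is a prefix of $\beta(p)$, so the first summand is $\le\pi_p(\beta(p))=1$, while the lower block constraint on $d$ gives $\pi_p(\overline{\beta_1(q)\cdots\beta_n(q)}0^\infty)\le\pi_p(d)$. Adding these estimates yields $\pi_p(c)<1+\pi_p(d)$, as required.

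The step I expect to be the main obstacle is the last inequality $\pi_p(\overline{\beta_1(q)\cdots\beta_n(q)}0^\infty)\le\pi_p(d)$ --- the same delicate point as in Lemma~\ref{l32}. A purely lexicographic comparison of $d_1\cdots d_n$ with $\overline{\beta_1(q)\cdots\beta_n(q)}$ does \emph{not} by itself control the values of $\pi_p$ once $p$ is well below $M+1$, so one must exploit the \emph{full} family of block inequalities satisfied by $d$ (and by its reflection $\overline d$) together with the structural properties of the greedy and quasi-greedy expansions available for $q\ge q'$, precisely as is done in \cite{KongLi2014} and in the proof of Lemma~\ref{l32}. Everything else is routine and runs parallel to Lemma~\ref{l32}.
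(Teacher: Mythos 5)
Your proposal follows the paper's own proof of this lemma essentially verbatim: the same choice of $N$ (first index with $\beta_N(q)<M$) and of $n$ with $q^{n-N}(q-1)>M$, the same choice of $p\ge q$ with $\beta_i(p)=\beta_i(q)$ for $i\le n$, the same graph-directed (Mauldin--Williams) realization of $\pi_p(\widetilde\ww_{q,n}')$, and the identical chain of estimates ending in $\pi_p(c)<1+\pi_p(d)$. The step you single out as the possible obstacle, $\pi_p\bigl(\overline{\beta_1(q)\cdots\beta_n(q)}\,0^\infty\bigr)\le\pi_p(d)$, is precisely the step the paper also asserts without further elaboration, and it does hold in this setting because $\beta_1(q)\cdots\beta_n(q)$ is a prefix of the greedy expansion $\beta(p)$ in the very base $p$ being used (this is where $p\ge q$ enters), so your argument matches the paper's.
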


\begin{proof}
We only give the proof for   $\widetilde\ww_{q,n}'$.

Let $N$ be the smallest index satisfying $\beta_N(q)<M$, and fix $n>N$ such that $q^{n-N}(q-1)>M$.
Let $p\in [q,M+1)$ be sufficiently close to $q$ such that
\begin{equation*}
\beta_i(p)=\beta_i(q),\quad i=1,\ldots, n.
\end{equation*}
Since $p\ge q$, we have also $p^{n-N}(p-1)>M$.

Similarly to the proof of Lemma \ref{l32} we construct an edge graph representing $\widetilde\ww_{q,n}'$, and hence $\pi_p(\widetilde\ww_{q,n}')$ is a graph-directed set.
Then it suffices to prove that the corresponding iterated function system satisfies the open set condition, i.e.,
\begin{equation*}
\pi_p(c)<1+\pi_p(d)
\end{equation*}
for all $c, d\in\widetilde\ww_{q,n}'$.

This follows again from our choice of $N$ and $p$ at the beginning of the proof.
Indeed, using the relations
\begin{equation*}
\beta_{k+1}(q)\cdots\beta_{k+N}(q)\le M^{N-1}(M-1)\quad k=0, 1,2,\ldots
\end{equation*}
we have
\begin{equation*}
\begin{split}
\pi_p(c)&\le\pi_p\big(( M^{N-1}(M-1) )^\infty\big)=\frac{M}{p-1}-\frac{1}{p^N-1}\\
&<\frac{M}{p-1}-\frac{M}{p^{n}(p-1)}=\pi_p(M^n 0^\infty)\\
&=\pi_p(\beta_1(q)\cdots\beta_n(q)\; 0^\infty)+\pi_p(\overline{\beta_1(q)\cdots \beta_n(q)}\; 0^\infty)\\
&<\pi_p(\beta(p))+\pi_p(d)=1+\pi_p(d).\qedhere
\end{split}
\end{equation*}
\end{proof}

We are ready to prove Theorem \ref{t13}.

\begin{proof}[Proof of Theorem \ref{t13}]
In view of Lemma \ref{l31} we may assume that $q\in[q',M+1)$.

We apply the first relation of the preceding lemma with $p=q$.
Letting $n\to\infty$ and using Lemma \ref{l27} and Proposition \ref{p28} we obtain that
\begin{equation*}
\dim_H \widetilde\uu_q=\frac{h(\widetilde\uu_q')}{\log q}.
\end{equation*}
Since $\dim_H \widetilde\uu_q=\dim_H\uu_q$ and $h(\widetilde\uu_q')=h(\uu_q')$ by Lemma \ref{l25}, the equality \eqref{11} follows.
\end{proof}

\section{Proof of Theorem \ref{t14}}\label{s4}

In view of Lemma \ref{l31} it suffices to prove the theorem for $q\in[q',M+1]$.

\begin{lemma}\label{l41}
The function $D$ is left continuous in every $q\in [q',M+1]$.
\end{lemma}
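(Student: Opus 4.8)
The plan is to prove left continuity at a point $q\in(q',M+1]$ by approximating $q$ from below by a sequence $q_j\nearrow q$ and comparing $D(q_j)$ with $D(q)$ via the approximating subshifts of finite type introduced in Section \ref{s2}. The key observation is that $D(q)=h(\widetilde\uu_q')/\log q$ by Theorem \ref{t13}, and by Proposition \ref{p28} this equals $\lim_{n\to\infty}h(\widetilde\uu_{q,n}')/\log q$. So it suffices to control the entropies $h(\widetilde\uu_{q_j}')$ from both sides in terms of the finite-type sets at $q$.

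First I would establish the easy inequality $\liminf_{j\to\infty}D(q_j)\le D(q)$. Since $q\mapsto\alpha(q)$ is increasing, $\widetilde\uu_{q_j}'\subseteq\widetilde\vv_{q_j}'\subseteq\widetilde\ww_{q_j,n}'$; but more usefully, for fixed $n$, once $q_j$ is close enough to $q$ the first $n$ digits $\alpha_1(q_j)\cdots\alpha_n(q_j)$ need not agree with those of $q$, yet by left continuity of $\alpha$ (Lemma \ref{l24}(ii)) we have $\alpha_1(q_j)\cdots\alpha_n(q_j)\le\alpha_1(q)\cdots\alpha_n(q)$, hence $\widetilde\uu_{q_j,n}'\subseteq\widetilde\ww_{q,n}'$ is not quite what I want; instead the cleaner route is $\widetilde\uu_{q_j}'\subseteq\widetilde\vv_{q_j,n}'\subseteq\widetilde\ww_{q,n}'$ for $j$ large (since $\alpha(q_j)\le\beta(q)$ eventually in the first $n$ coordinates), giving $h(\widetilde\uu_{q_j}')\le h(\widetilde\ww_{q,n}')$, then letting $j\to\infty$ and then $n\to\infty$ with Proposition \ref{p28} yields $\limsup_j h(\widetilde\uu_{q_j}')\le h(\widetilde\uu_q')$, and dividing by $\log q_j\to\log q$ gives $\limsup_j D(q_j)\le D(q)$.

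The harder direction is $\liminf_{j\to\infty}D(q_j)\ge D(q)$. Here I would use that $\widetilde\uu_{q,n}'$ is a \emph{fixed} subshift of finite type determined by finitely many forbidden blocks of length $n$, namely those $d_1\cdots d_n$ with $d_1\cdots d_n\le\overline{\alpha_1(q)\cdots\alpha_n(q)}$ or $d_1\cdots d_n\ge\alpha_1(q)\cdots\alpha_n(q)$. By Lemma \ref{l24}(ii), $\alpha_i(q_j)\to\alpha_i(q)$ for each $i$, so there is an index $r$ such that $\alpha_1(q)\cdots\alpha_n(q)$ and $\alpha_1(q_j)\cdots\alpha_n(q_j)$ agree for all $j\ge r$ \emph{except} that this requires $\alpha(q)$ not to be the limit from strictly below in those coordinates — which is exactly the content of componentwise convergence $\alpha(q_j)\to\alpha(q)$: for each fixed $n$ there is $j_0$ with $\alpha_1(q_j)\cdots\alpha_n(q_j)=\alpha_1(q)\cdots\alpha_n(q)$ for all $j\ge j_0$, \emph{provided} $\alpha_n(q)>0$ eventually stabilizes; more carefully, componentwise convergence of integer sequences means eventual equality coordinate by coordinate, so for each fixed $n$ there is $j_0(n)$ with the first $n$ digits equal for $j\ge j_0(n)$. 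Hence for such $j$, $\widetilde\uu_{q,n}'\subseteq\widetilde\vv_{q_j,n}'$, and actually $\widetilde\uu_{q,n}'\subseteq\widetilde\uu_{q_j}'$ up to the two boundary words, so $h(\widetilde\uu_{q_j}')\ge h(\widetilde\uu_{q,n}')$ (removing finitely many words of each length does not change entropy, as in the proof of \eqref{28}). Letting $j\to\infty$ gives $\liminf_j h(\widetilde\uu_{q_j}')\ge h(\widetilde\uu_{q,n}')$, and then $n\to\infty$ with Proposition \ref{p28} gives $\liminf_j h(\widetilde\uu_{q_j}')\ge h(\widetilde\uu_q')$. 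Combined with the denominator $\log q_j\to\log q$, this yields $\liminf_j D(q_j)\ge D(q)$, completing the proof. The main obstacle is the $q=M+1$ endpoint and the case $\alpha(q)$ not infinite (so $\beta(q)\ne\alpha(q)$): there one must be careful that the componentwise convergence $\alpha(q_j)\to\alpha(q)$ still forces eventual agreement of the first $n$ digits — which it does, since $\alpha(q)$ is then eventually periodic with positive digits appearing cofinally — and also handle $D(M+1)=1$ separately using Lemma \ref{l31}(ii) and the fact that $h(\widetilde\uu_{q_j}')\to\log(M+1)$, which follows from Proposition \ref{p28} applied near $M+1$.
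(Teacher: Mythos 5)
Your argument is correct, but it follows a genuinely different route from the paper. You reduce left continuity of $D$ to left continuity of the entropy: invoking Theorem \ref{t13} (already proved in Section \ref{s3}) at $q$ and at the approximating points $q_j\nearrow q$, you sandwich $h(\widetilde\uu_{q_j}')$ between $h(\widetilde\uu_{q,n}')$ and $h(\widetilde\ww_{q,n}')$ — the lower bound via component-wise convergence $\alpha(q_j)\to\alpha(q)$ (Lemma \ref{l24}(ii)), which gives $\widetilde\uu_{q,n}'=\widetilde\uu_{q_j,n}'\subseteq\widetilde\uu_{q_j}'$ for $j$ large (the exact inclusion holds by \eqref{21}, so your hedge about ``boundary words'' is unnecessary), and the upper bound via $\alpha(q_j)<\alpha(q)\le\beta(q)$, which gives $\widetilde\uu_{q_j}'\subseteq\widetilde\vv_{q_j,n}'\subseteq\widetilde\ww_{q,n}'$ for all $j$ — and then let $n\to\infty$ using Proposition \ref{p28}. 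The paper instead works directly at the level of Hausdorff dimension: it fixes $n$ with $h(\widetilde\vv_{q,n}')-h(\widetilde\uu_{q,n}')$ small, takes $p<q$ with $\alpha_i(p)=\alpha_i(q)$ for $i\le n$, uses the sandwich $\widetilde\uu_{q,n}'\subseteq\widetilde\uu_p'\subseteq\widetilde\uu_q'\subseteq\widetilde\vv_{q,n}'$, and applies Lemma \ref{l32} at the perturbed base $p$ to convert entropies of the finite-type approximants into dimensions of $\pi_p$-projections, treating $q=M+1$ separately. What your version buys is economy: no need for Lemma \ref{l32} at bases $p\ne q$, no case split at $M+1$ (your lower-bound argument already gives $h(\widetilde\uu_{q_j}')\to\log(M+1)$ there), and a cleaner structural statement (left continuity of $q\mapsto h(\widetilde\uu_q')$); what the paper's version buys is that it runs in parallel with the right-continuity proof (Lemma \ref{l42}) and quantifies $\abs{D(p)-D(q)}$ explicitly without routing everything through Theorem \ref{t13}. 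Two cosmetic points: you announce the inequality $\liminf_j D(q_j)\le D(q)$ but correctly prove $\limsup_j D(q_j)\le D(q)$; and you restrict to $q\in(q',M+1]$, whereas the lemma includes $q=q'$ — that endpoint is immediate since $D\equiv 0$ on $(1,q']$ by Theorem \ref{t11}, but it should be said.
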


\begin{proof}
Fix $q\in [q',M+1]$ and $\eps>0$ arbitrarily.
We have to show that if $p\in (1,q)$ is sufficiently close to $q$, then $\abs{D(p)-D(q)}<\eps$. The proof will be split into the following two cases.

\emph{Case I:} $q\in [q',M+1)$. Using Proposition \ref{p28} we fix a sufficiently large index $n$ such that
\begin{equation*}
h(\widetilde\vv_{q,n}')-h(\widetilde\uu_{q,n}')<\frac{\eps\log q}{2}.
\end{equation*}

Next we fix $p_n\in (1,q)$  sufficiently close to $q$, such that
\begin{equation*}
\alpha_i(p_n)=\alpha_i(q)\quad\text{for}\quad i=1,\ldots, n.
\end{equation*}

If $p\in (p_n,q)$, then using the inclusions
\begin{equation*}
\widetilde\uu_{q,n}'\subseteq  \widetilde\uu_p'\subseteq  \widetilde\uu_q'\subseteq  \widetilde\vv_{q,n}'
\end{equation*}
and applying Lemma \ref{l32} we obtain
\begin{align*}
&\frac{h(\widetilde\uu_{q,n}')}{\log p}=\dime_H\pi_p(\widetilde\uu_{q,n}')\le \dime_H\widetilde\uu_p\le \dime_H\pi_p(\widetilde\vv_{q,n}')=\frac{h(\widetilde\vv_{q,n}')}{\log p}
\intertext{and}
&\frac{h(\widetilde\uu_{q,n}')}{\log q}=\dime_H\pi_q(\widetilde\uu_{q,n}')\le \dime_H\widetilde\uu_q\le \dime_H\pi_q(\widetilde\vv_{q,n}')=\frac{h(\widetilde\vv_{q,n}')}{\log q}.
\end{align*}\
It follows that
\begin{equation*}
\begin{split}
\abs{D(p)-D(q)}&\le \frac{h(\widetilde\vv_{q,n}')}{\log p}-\frac{h(\widetilde\uu_{q,n}')}{\log q}\\
&=\frac{h(\widetilde\vv_{q,n}')-h(\widetilde\uu_{q,n}')}{\log p}+h(\widetilde\uu_{q,n}')\left(\frac{1}{\log p}-\frac{1}{\log q}\right)\\
&< \frac{\eps\log q}{2\log p}+h(\widetilde\uu_{q,n}')\left(\frac{1}{\log p}-\frac{1}{\log q}\right).
\end{split}
\end{equation*}
If $p\in (p_n,q)$ is close enough to $q$, then the right side is $<\eps$.
\medskip

\emph{Case II:} $q=M+1$. Since $D(q)=1$ and $0\le D(p)\le 1$ for all $p$, it is suffient to show that $D(p)>1-\eps$ for all $p\in (1,q)$, close enough to $q$.

Since $h(\widetilde\uu_q')=\log q=\log (M+1)>0$ by Lemma \ref{l31}, applying Proposition \ref{p28} we may fix a large integer $n$ such that
\begin{equation*}
h(\widetilde\uu_{q,n}')>\left( 1-\frac{\eps}{2}\right) \log q.
\end{equation*}

If $p\in (1,q)$ is close enough to $q$, then
\begin{equation*}
\alpha_i(p)=\alpha_i(q)\quad\text{for}\quad i=1,\ldots, n,
\end{equation*}
whence $\widetilde\uu_{q,n}'\subseteq \widetilde\uu_p'$ by \eqref{21}.
It follows that
\begin{equation*}
h(\widetilde\uu_p')>\left( 1-\frac{\eps}{2}\right) \log q.
\end{equation*}

Dividing by $\log p$ and applying Lemma \ref{l32} we infer that
\begin{equation*}
D(p)>\left( 1-\frac{\eps}{2}\right)\frac{\log q}{\log p}.
\end{equation*}
We conclude by observing that the right side is $>1-\eps$ if $p$ is close enough to $q$.
\end{proof}

We remark that for $M=1$ a simple direct proof was given for the left continuity in $q=2$ in \cite[Proposition 4.1 (i)]{DeVriesKomornik2010}.

\begin{lemma}\label{l42}
The function $D$ is right continuous in $[q',M+1)$.
\end{lemma}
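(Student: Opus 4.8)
The plan is to mirror the structure of the proof of Lemma~\ref{l41}, but now approximating $\widetilde\uu_q'$ from \emph{outside} by the sets $\widetilde\ww_{q,n}'$ and from \emph{inside} by $\widetilde\uu_{q,n}'$, since these are the families whose base of definition (namely $\beta(q)$, respectively $\alpha(q)$) behaves well under decreasing $p\searrow q$ by Lemma~\ref{l24}(iii). As before the argument splits according to whether $h(\widetilde\uu_q')>0$ or not, which happens at $q=q'$ versus $q\in(q',M+1)$; the endpoint $q=M+1$ need not be treated since right continuity is asserted only on $[q',M+1)$.

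First, for $q\in(q',M+1)$: given $\eps>0$, use Proposition~\ref{p28} to fix $n$ large with $h(\widetilde\ww_{q,n}')-h(\widetilde\uu_{q,n}')<\tfrac{\eps\log q}{2}$. Next fix $\eps_0(q)>0$ from Lemma~\ref{l33} so that $\dim_H\pi_p(\widetilde\uu_{q,n}')=h(\widetilde\uu_{q,n}')/\log p$ and $\dim_H\pi_p(\widetilde\ww_{q,n}')=h(\widetilde\ww_{q,n}')/\log p$ for $p\in[q,q+\eps_0(q))$. Now shrink $\eps_0$ further, using Lemma~\ref{l24}(iii), so that $\beta_i(p)=\beta_i(q)$ for $i=1,\dots,n$ whenever $q<p<q+\eps_0(q)$; this guarantees the inclusions
\begin{equation*}
\widetilde\uu_{q,n}'\subseteq\widetilde\uu_p'\subseteq\widetilde\ww_{p,n}'\subseteq\widetilde\ww_{q,n}',
\end{equation*}
where the middle inclusion is \eqref{21} and the last holds because $\widetilde\ww_{p,n}'$ and $\widetilde\ww_{q,n}'$ have the same defining word $\beta_1\cdots\beta_n$. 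Applying $\dim_H\pi_p(\cdot)$ and the already-established Theorem~\ref{t13} (so that $\dim_H\widetilde\uu_p=D(p)$) yields
\begin{equation*}
\frac{h(\widetilde\uu_{q,n}')}{\log p}\le D(p)\le\frac{h(\widetilde\ww_{q,n}')}{\log p}.
\end{equation*}
Combining this with the corresponding two-sided bound at $p=q$ (for which $D(q)$ lies between the same two ratios with $\log q$ in the denominator) gives
\begin{equation*}
\abs{D(p)-D(q)}\le\frac{h(\widetilde\ww_{q,n}')-h(\widetilde\uu_{q,n}')}{\log q}+h(\widetilde\ww_{q,n}')\left(\frac{1}{\log q}-\frac{1}{\log p}\right)<\frac{\eps}{2}+h(\widetilde\ww_{q,n}')\left(\frac{1}{\log q}-\frac{1}{\log p}\right),
\end{equation*}
and the last term is $<\eps/2$ once $p$ is taken close enough to $q$.

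Second, for $q=q'$: here $D(q')=0$ by Theorem~\ref{t11}, so one only needs $D(p)<\eps$ for $p$ slightly larger than $q'$. By Proposition~\ref{p28} applied at $q'$, fix $n$ with $h(\widetilde\ww_{q',n}')<\tfrac{\eps}{2}\log q'$; then for $p$ close to $q'$ with $\beta_i(p)=\beta_i(q')$ for $i\le n$ we get $\widetilde\uu_p'\subseteq\widetilde\ww_{p,n}'\subseteq\widetilde\ww_{q',n}'$, hence $h(\widetilde\uu_p')\le h(\widetilde\ww_{q',n}')$, and dividing by $\log p$ together with Lemma~\ref{l33} gives $D(p)=h(\widetilde\uu_p')/\log p<\eps$ for $p$ close enough to $q'$.

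The main obstacle is making sure the two approximating families can be legitimately compared at the nearby base $p$: the inclusion $\widetilde\ww_{p,n}'\subseteq\widetilde\ww_{q,n}'$ must come from the coincidence of the first $n$ greedy digits, which is exactly where right continuity of $q\mapsto\beta(q)$ (Lemma~\ref{l24}(iii)) is essential — note that one cannot instead use $\widetilde\vv_{q,n}'$ here, since $q\mapsto\alpha(q)$ is only \emph{left} continuous, and this is precisely why the proof of right continuity is forced to pass through the $\widetilde\ww$ sets rather than the $\widetilde\vv$ sets used in Lemma~\ref{l41}. A minor point to check is that the chosen $n$ in the $q=q'$ case may fail to satisfy \eqref{23}, but Lemma~\ref{l33} imposes no such requirement, and Proposition~\ref{p28} already gives convergence of $h(\widetilde\ww_{q',n}')$ along all $n$, so no difficulty arises.
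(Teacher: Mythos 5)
Your proof is correct and follows essentially the same route as the paper: sandwich $\widetilde\uu_p'$ between $\widetilde\uu_{q,n}'$ and $\widetilde\ww_{q,n}'$ using the right continuity of $q\mapsto\beta(q)$, apply Lemma~\ref{l33} for the dimensions of the projections and Proposition~\ref{p28} to make the entropy gap small. The separate treatment of $q=q'$ is harmless but unnecessary, since your first-case sandwich argument (like the paper's single uniform argument) does not use $h(\widetilde\uu_q')>0$ and works verbatim at $q=q'$.
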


\begin{proof}
Fix $q\in [q',M+1)$ and $\eps>0$ arbitrarily.
We have to show that if $p\in (q,M+1)$ is sufficiently close to $q$, then $\abs{D(p)-D(q)}<\eps$.

Using Proposition \ref{p28} we fix a sufficiently large index $n$ such that
\begin{equation*}
h(\widetilde\ww_{q,n}')-h(\widetilde\uu_{q,n}')<\frac{\eps\log q}{2}.
\end{equation*}

Next we fix $p_n\in (q,M+1)$  sufficiently close to $q$, such that
\begin{equation*}
\beta_i(p_n)=\beta_i(q)\quad\text{for}\quad i=1,\ldots, n.
\end{equation*}

If $p\in (q,p_n)$, then using the inclusions
\begin{equation*}
\widetilde\uu_{q,n}'\subseteq  \widetilde\uu_q'\subseteq  \widetilde\uu_p'\subseteq  \widetilde\ww_{q,n}'
\end{equation*}
and applying Lemma \ref{l33} we obtain that
\begin{equation*}
\dim_H\pi_p(\widetilde\uu_{q,n}')=\frac{h(\widetilde\uu_{q,n}')}{\log p}
\quad\text{and}\quad
\dim_H\pi_p(\widetilde\ww_{q,n}')=\frac{h(\widetilde\ww_{q,n}')}{\log p}.
\end{equation*}

Repeating the proof of Lemma \ref{l41} with $\widetilde\vv_{q,n}'$  changed to $\widetilde\ww_{q,n}'$, now we obtain the estimate
\begin{equation*}
\abs{D(p)-D(q)}\le \frac{h(\widetilde\ww_{q,n}') }{\log q}-\frac{h(\widetilde\uu_{q,n}')}{\log p},
\end{equation*}
and we may conclude as before.
\end{proof}

In the next result we  take any $q\in (1,\infty)$.

\begin{lemma}\label{l43}
$D$ has a bounded variation in $[q',M+1]$.
\end{lemma}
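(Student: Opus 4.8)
The goal is to show that $D$ has bounded variation on $[q',M+1]$. Since $D$ is continuous there (Lemmas~\ref{l41}, \ref{l42}, together with $D(M+1)=1=\lim_{p\nearrow M+1}D(p)$), it suffices to control the total variation. The natural strategy is to exploit the formula $D(q)=h(\uu_q')/\log q = h(\widetilde\uu_q')/\log q$ (Theorem~\ref{t13} and Lemma~\ref{l25}) and to understand how $h(\widetilde\uu_q')$ varies with $q$. The key structural fact is Lemma~\ref{l211}: on each connected component of $(1,\infty)\setminus\uuu$ the entropy $h(\widetilde\uu_q')$ is constant. So the variation of $D$ on $[q',M+1]$ comes from two sources: the smooth factor $1/\log q$, whose variation on a bounded interval away from $1$ is clearly finite; and the jumps/oscillation of $h(\widetilde\uu_q')$ itself, which can only happen on the closed set $\uuu\cap[q',M+1]$.

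**Main steps.** First I would reduce: write $D(q) = H(q)\cdot g(q)$ with $H(q):=h(\widetilde\uu_q')$ and $g(q):=1/\log q$. Since $g$ is $C^1$ and monotone on $[q',M+1]$ with $0<g\le 1/\log q'$, and $0\le H\le \log(M+1)$ is bounded, a product/Leibniz-type estimate for total variation gives
\begin{equation*}
\mathrm{Var}_{[q',M+1]}(D)\le \|g\|_\infty\,\mathrm{Var}(H) + \|H\|_\infty\,\mathrm{Var}(g),
\end{equation*}
so it remains to bound $\mathrm{Var}(H)$. Second, I would show $H$ is \emph{monotone nondecreasing} on $[q',M+1]$. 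This should follow from the fact that the maps $q\mapsto\alpha(q)$ are (lexicographically) increasing and hence the defining constraint sets $\widetilde\uu_q'$ are increasing in $q$ in an appropriate sense; more precisely, for $q<p$ one has $\widetilde\uu_q'\subseteq\widetilde\vv_p'$ (or one uses Lemma~\ref{l29}: $\widetilde\ww_{q,n}'\subseteq\widetilde\uu_{p,n}'$ for large $n$), and combining with Proposition~\ref{p28} gives $h(\widetilde\uu_q')=\lim_n h(\widetilde\ww_{q,n}')\le\lim_n h(\widetilde\uu_{p,n}')=h(\widetilde\uu_p')$. Once $H$ is monotone, $\mathrm{Var}_{[q',M+1]}(H)=H(M+1)-H(q')=\log(M+1)-0=\log(M+1)<\infty$, and we are done.

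**The main obstacle.** The crux is establishing the monotonicity of $q\mapsto h(\widetilde\uu_q')$ cleanly; naively $\widetilde\uu_q'$ is not literally contained in $\widetilde\uu_p'$ for $q<p$ because the alphabet constraints involve strict inequalities against $\alpha(q)$ versus $\alpha(p)$, and $\alpha$ need not be continuous from the right. The clean route is through the finite-type approximants: pick, as in Lemma~\ref{l29}, an intermediate $r\in(q,p)$ with $\beta(r)=\alpha(r)$, so that for all large $n$ we get $\widetilde\ww_{q,n}'\subseteq\widetilde\uu_{r,n}'\subseteq\widetilde\uu_{p}'$ (the last inclusion from \eqref{21} since $\widetilde\uu_{r,n}'\subseteq\widetilde\uu_r'\subseteq\widetilde\vv_r'\subseteq\widetilde\vv_{p,n}'$ needs care — rather use $\widetilde\uu_{r,n}'\subseteq\widetilde\uu_{p,n}'$ again via Lemma~\ref{l29} applied to $r<p$, then $\subseteq\widetilde\uu_p'$). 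Passing to entropy and using Proposition~\ref{p28} (applied at $q$) yields $h(\widetilde\uu_q')=\lim_n h(\widetilde\ww_{q,n}')\le h(\widetilde\uu_p')$. The rest — assembling the Leibniz estimate and verifying $\mathrm{Var}(g)=g(q')-g(M+1)<\infty$ since $g$ is monotone — is routine.
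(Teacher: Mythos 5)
Your proposal is correct and follows essentially the same route as the paper: the paper also writes $D(q)=h(\uu_q')/\log q$, uses that $h$ is non-decreasing with $h(q')=0$, $h(M+1)=\log(M+1)$ and that $1/\log q$ is monotone and bounded below by $1/\log(M+1)$, and sums the resulting product-type estimates over a partition (obtaining the explicit bound $2\log(M+1)/\log q'-1$), which is just your Leibniz variation inequality made explicit. The only difference is that your ``main obstacle'' is not really one: since $q\mapsto\alpha(q)$ is lexicographically strictly increasing, Lemma~\ref{l23} gives $\uu_q'\subseteq\uu_p'$ (and $\widetilde\uu_q'\subseteq\widetilde\uu_p'$) directly for $q<p$, so the monotonicity of the entropy needs none of the machinery of Lemma~\ref{l29} and Proposition~\ref{p28}, although your heavier argument is also valid.
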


\begin{proof}
We prove that for every finite subdivision
\begin{equation*}
q_0:=q'<q_1<\cdots <q_n=M+1
\end{equation*}
the following inequality holds:
\begin{equation*}
\sum_{i=1}^n\abs{D(q_i)-D(q_{i-1})}\le \frac{2\log (M+1)}{\log q'}-1.
\end{equation*}

Writing $h(q)$ instead of $h(\uu_q')$ for brevity, we know that $h$ is non-decreasing in $[q_0,M+1]$ with $h(q_0)=0$ and $h(M+1)=\log (M+1)$.
Therefore we have the following elementary inequalities:
\begin{equation*}
D(q_i)-D(q_{i-1})
=\frac{h(q_i)}{\log q_i}-\frac{h(q_{i-1})}{\log q_{i-1}}
\le \frac{h(q_i)-h(q_{i-1})}{\log q_i}
\le \frac{h(q_i)-h(q_{i-1})}{\log q_0}
\end{equation*}
and
\begin{equation*}
D(q_i)-D(q_{i-1})
\ge \frac{h(q_{i-1})}{\log q_i}-\frac{h(q_{i-1})}{\log q_{i-1}}
\ge \frac{\log (M+1)}{\log q_i}-\frac{\log (M+1)}{\log q_{i-1}}
\end{equation*}
It follows that
\begin{equation*}
 \abs{D(q_i)-D(q_{i-1})}\le \frac{h(q_i)-h(q_{i-1})}{\log q_0}+\left( \frac{\log (M+1)}{\log q_{i-1}}-\frac{\log (M+1)}{\log q_i}\right),
\end{equation*}
and hence
\begin{align*}
\sum_{i=1}^n&\abs{D(q_i)-D(q_{i-1})}\\
&\le \frac{h(M+1)-h(q_0)}{\log q_0}+\frac{\log (M+1)}{\log q_0}-\frac{\log (M+1)}{\log (M+1)}\\
&=\frac{2\log (M+1)}{\log q_0}-1,
\end{align*}
as stated.
\end{proof}

\section{The Hausdorff dimension of $\uu$}\label{s5}

As usual, we denote by $\uu$ the set of bases $q>1$ in which $x=1$ has a unique expansion, and by $\uu'$ the set of corresponding expansions.
We recall from \cite{ErdosJooKomornik1990} and \cite{KomornikLoreti2002} that a sequence $c=(c_i)$ belongs to $\uu'$ if and only if the lexicographic inequalities
\begin{equation}\label{51}
\overline{c_1c_2\cdots}<c_{k+1}c_{k+2}\cdots<c_1c_2\cdots
\end{equation}
for all $k\ge 1$.

Fix an integer $N\ge 2$ and, inspired by the proof of \cite[Proposition 4.1 (i)]{DeVriesKomornik2010}, consider the set $\hat\uu_N'$ of sequences $c=(c_i)\in\set{0,\ldots,M}^{\infty}$ satisfying the equality 
\begin{equation*}
c_1\cdots\ c_{2N}=M^{2N-1}0,
\end{equation*}
and the  lexicographic inequalities 
\begin{equation*}
0^N<c_{kN+1}\cdots c_{kN+N}<M^N
\end{equation*}
for $k=2,3,\ldots .$ 
All these sequences satisfy \eqref{51}, so that $\hat\uu_N'\subseteq\uu'$ and $\hat\uu_N\subseteq\uu$, where we use the natural notation 
\begin{equation*}
\hat\uu_N:=\set{q\in(1,M+1]\ :\ \beta(q)\in\hat\uu_N'}.
\end{equation*}
(Here $\beta(q)$ denotes the unique and hence also greedy expansion of $x=1$ in base $q$.)

It follows from the definition of $\hat\uu_N'$ that
\begin{equation}\label{52}
\abs{B_{nN}(\hat\uu_N')}=\left((M+1)^N-2 \right) ^{n-2}\quad\text{for all}\quad  n\ge 2
\end{equation}
and 
\begin{equation}\label{53}
\abs{B_{kN+1,nN}(\hat\uu_N')}=\left((M+1)^N-2 \right) ^{n-k}\quad\text{for all}\quad n\ge k\ge 2.
\end{equation}

Consider two elements $p<q$ of $\hat\uu_N$, and let $m$ be the smallest positive integer such that $\beta_m(p)\ne\beta_m(q)$. 
Then $\beta_m(p)<\beta_m(q)$, and we deduce from the definition of $\hat\uu_N$ that
\begin{equation*}
\left( \sum_{i=1}^m\frac{\beta_i(q)}{q^i}\right) +\frac{1}{q^{m+2N}}<1<\left( \sum_{i=1}^m\frac{\beta_i(p)}{p^i}\right) +\frac{1}{p^m}\le \sum_{i=1}^m\frac{\beta_i(q)}{p^i}.
\end{equation*}
Hence 
\begin{equation*}
\frac{1}{q^{m+2N}}<\sum_{i=1}^m\beta_i(q)\left(p^{-i}-q^{-i} \right) <M\sum_{i=1}^{\infty}\left(p^{-i}-q^{-i} \right)=\frac{M(q-p)}{(p-1)(q-1)}
\end{equation*}
and therefore 
\begin{equation*}
\frac{1}{(M+1)^{m+2N}}<\frac{M(q-p)}{(q'-1)^2},
\end{equation*}
where $q'$ denotes the Komornik--Loreti constant as usual.

Setting 
\begin{equation*}
c:=\frac{(q'-1)^2}{M(M+1)^{2N}}
\end{equation*}
we conclude the following 

\begin{lemma}\label{l51}
If $p,q\in\hat\uu_N$ and $0<q-p\le c(M+1)^{-m}$ for some positive integer $m$, then $\beta_i(p)=\beta_i(q)$ for all $i=1,\ldots,m$.
\end{lemma}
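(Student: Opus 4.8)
The statement to prove is Lemma~\ref{l51}, and in fact all the real work has already been done in the paragraphs immediately preceding it; the lemma is just the contrapositive-flavoured repackaging of the chain of inequalities established there. So my plan is essentially to trace that computation backwards and make the quantifier structure explicit.

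First I would argue by contradiction: suppose $p,q\in\hat\uu_N$ with $p<q$ (the case $p=q$ being trivial and $p>q$ handled by symmetry of the roles), suppose $0<q-p\le c(M+1)^{-m}$, but suppose that $\beta_i(p)\ne\beta_i(q)$ for some $i\in\set{1,\ldots,m}$. Let $m_0$ be the smallest such index; then $m_0\le m$ and, as noted in the text, $\beta_{m_0}(p)<\beta_{m_0}(q)$ because $\beta(p)<\beta(q)$ lexicographically (the map $r\mapsto\beta(r)$ being strictly increasing). Applying the displayed chain of inequalities from the excerpt with $m$ replaced by $m_0$ — namely the comparison of $1=\pi_q(\beta(q))=\pi_p(\beta(p))$ with the truncations, using that both sequences lie in $\hat\uu_N'$ so that the first $2N$ digits are $M^{2N-1}0$ and subsequent blocks of length $N$ avoid $0^N$ and $M^N$ — one obtains
\begin{equation*}
\frac{1}{q^{m_0+2N}}<\frac{M(q-p)}{(p-1)(q-1)},
\end{equation*}
and then, bounding $q<M+1$ and $p-1,q-1\ge q'-1$, one gets $(M+1)^{-(m_0+2N)}<M(q-p)/(q'-1)^2$, i.e. $q-p>c(M+1)^{-m_0}$.

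Now I would close the loop: since $m_0\le m$ we have $(M+1)^{-m_0}\ge(M+1)^{-m}$, hence $q-p>c(M+1)^{-m_0}\ge c(M+1)^{-m}$, contradicting the hypothesis $q-p\le c(M+1)^{-m}$. Therefore no such index $i$ exists, which is exactly the assertion $\beta_i(p)=\beta_i(q)$ for all $i=1,\ldots,m$.

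Honestly there is no genuine obstacle here — the main point is bookkeeping: being careful that the estimate derived in the text was stated for the \emph{first} index of disagreement, so that one must introduce $m_0$ and not blindly use $m$, and then observing that $(M+1)^{-\cdot}$ is decreasing so the inequality at $m_0$ is \emph{stronger} than (implies) the one at $m$, which is what makes the contradiction go through. If anything deserves a second look it is checking that every step of the displayed inequality chain in the excerpt remains valid with $m_0$ in place of $m$; but since that chain only used $m$ through the definition of $\hat\uu_N'$ (the $M^{2N-1}0$ prefix and the block conditions, which are about fixed positions, not about $m$) and through the minimality of the disagreement index, it transfers verbatim.
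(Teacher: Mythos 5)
Your argument is correct and is essentially the paper's own proof: the paper derives, for the first index $m_0$ of disagreement, exactly the inequality $q-p>c(M+1)^{-m_0}$ via the same chain of estimates, and the lemma is its immediate contrapositive. Your only addition is to make the bookkeeping explicit (introducing $m_0$ and using monotonicity of $(M+1)^{-m}$), which is fine and changes nothing of substance.
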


Now we are ready to compute the Hausdorff dimension of $\uu$.

\begin{proof}[Proof of Theorem \ref{t16} (ii)]
Consider a finite cover $\cup I_j$ of $\hat\uu_N$ by intervals $I_j$ of length $\abs{I_j}\le c(M+1)^{-N}$.
For each positive integer $k$ we denote by $J_k$ the set of indices $j$ satisfying the inequalities 
\begin{equation*}
c(M+1)^{-(k+1)N}<\abs{I_j}\le c(M+1)^{-kN}.
\end{equation*}
We fix a large integer $n$ satisfying $c(M+1)^{-nN}<\abs{I_j}$ for all $j$; then $J_k=\varnothing$ for all $k\ge n$.

If $j\in J_k$ and $p,q\in\hat\uu_N\cap I_j$, then the first $kN$ digits of $\beta(p)$ and $\beta(q)$ coincide by the above lemma, so that at most $\abs{B_{kN+1,nN}(\hat\uu_N')}$ elements of $B_{nN}(\hat\uu_N')$ may occur for the bases $q\in\hat\uu_N\cap I_j$. 
Hence 
\begin{equation*}
\abs{B_{nN}(\hat\uu_N')}\le \sum_{k}\sum_{j\in J_k}\abs{B_{kN+1,nN}(\hat\uu_N')}.
\end{equation*}
Using \eqref{52} and \eqref{53} this is equivalent to
\begin{equation*}
\left((M+1)^N-2 \right) ^{-2}\le \sum_{k}\sum_{j\in J_k}\left((M+1)^N-2 \right) ^{-k},
\end{equation*}

Introducing the number $\sigma=\sigma(N)\in (0,1)$ by the equation 
\begin{equation}\label{54}
(M+1)^N-2=(M+1)^{\sigma N},
\end{equation}
we may rewrite the preceding inequality in the form
\begin{equation*}
(M+1)^{-2\sigma N}\le \sum_{k}\sum_{j\in J_k}(M+1)^{-\sigma Nk}.
\end{equation*}

Since 
\begin{equation*}
(M+1)^{-Nk}<c^{-1}(M+1)^N\abs{I_j}
\end{equation*}
by the definition of $J_k$, it follows that 
\begin{equation*}
(M+1)^{-2\sigma N}\le \sum_{k}\sum_{j\in J_k}c^{-\sigma}(M+1)^{\sigma N}\abs{I_j}^{\sigma}
\end{equation*}
or equivalently 
\begin{equation*}
\sum_j\abs{I_j}^{\sigma}\ge c^{\sigma}(M+1)^{-3\sigma N}.
\end{equation*}

Since the right side is positive and depends only on $N$, we conclude that $\dime_H \hat\uu_N\ge \sigma(N)$. 

It follows from the definition \eqref{54} that $\sigma(N)\to 1$ as $N\to\infty$.
Since $\hat\uu_N\subseteq\uu\subseteq\RR$ for all $N$, letting $N\to\infty$ we conclude that $\dime_H\uu=1$.
\end{proof}

\section{Proof of Theorem \ref{t15} and the Lebesgue measure of $\uu$}\label{s6}

Set $\bb':=\set{\beta(q)\ :\ q\in (1,M+1]}$ for brevity. 

Our main tool is a generalization of a reasoning in \cite{ErdosJoo1991}.
Given two positive integers $n, t$ and a word $\eta_1\cdots\eta_n\in B_n(\bb')$, the sets 
\begin{equation*}
\set{q\in [1,M+1)\ :\ \beta_i(q)=\eta_i,\quad i=1,\ldots, n}
\end{equation*}
and 
\begin{equation*}
\set{q\in [1,M+1)\ :\ \beta_i(q)=
\begin{cases}
\eta_i,\quad &i=1,\ldots, n,\\ 
0,&i=n+1,\ldots,n+t
\end{cases}
}
\end{equation*}
are two intervals $[q_1,q_2)$ and $[q_1,q_3)$ satisfying $q_3\le q_2$.

\begin{lemma}\label{l61}
The following inequality holds:
\begin{equation*}
\frac{q_3-q_1}{q_2-q_1}\ge \frac{(q_1-1)^3}{M^2q_2^{t+2}}.
\end{equation*}
\end{lemma}
We stress the fact that the right side does not depend on $n$.

\begin{proof}
It follows from the greedy algorithm that 
\begin{align}
&\sum_{i=1}^n\frac{\eta_i}{q_1^i}=1,\label{61}\\ 
&\sum_{i=1}^n\frac{\eta_i}{q_2^i}+\sum_{i=n+1}^{\infty}\frac{M}{q_2^i}\ge 1\notag
\intertext{and} 
&\left( \sum_{i=1}^n\frac{\eta_i}{q_3^i}\right) +\frac{1}{q_3^{n+t}}=1.\label{62}
\end{align}

Using the first two relations and the relation $\eta_1\ge 1$ we obtain that 
\begin{equation*}
\frac{M}{q_2^n(q_2-1)} 
\ge \sum_{i=1}^n \eta_i\left( q_1^{-i}-q_2^{-i}\right)
\ge q_1^{-1}-q_2^{-1}=\frac{q_2-q_1}{q_1q_2}.
\end{equation*}
Hence  
\begin{equation}\label{63}
(0<)q_2-q_1\le \frac{Mq_1q_2}{q_2^n(q_2-1)} .
\end{equation}

Similarly, using  \eqref{61} and \eqref{62} we obtain that 
\begin{align*}
\frac{1}{q_3^{n+t}}
&=\sum_{i=1}^n \eta_i\left( q_1^{-i}-q_3^{-i}\right)\le M\sum_{i=1}^{\infty}  \left( q_1^{-i}-q_3^{-i}\right)\\
&=M\left( \frac{q_1^{-1}}{1-q_1^{-1}}-\frac{q_3^{-1}}{1-q_3^{-1}}\right) =\frac{M(q_3-q_1)}{(q_1-1)(q_3-1)}.
\end{align*}
Hence 
\begin{equation}\label{64}
q_3-q_1\ge \frac{(q_1-1)(q_3-1)}{Mq_3^{n+t}}.
\end{equation} 

Combining \eqref{63} and \eqref{64}, and using the inequalities $q_1\le q_3\le q_2$ we conclude that 
\begin{equation*}
\frac{q_3-q_1}{q_2-q_1}\ge  \frac{(q_1-1)(q_3-1)}{Mq_3^{n+t}}\cdot\frac{q_2^n(q_2-1)}{Mq_1q_2}\ge \frac{(q_1-1)^3}{M^2q_2^{t+2}}.\qedhere
\end{equation*}
\end{proof}

In the next lemma $\lambda$ denotes the usual Lebesgue measure.

\begin{lemma}\label{l62}
The following inequality hold for all $1<p< r\le M+1$ and for all positive integers $n$ and $t$:
\begin{equation*}
\lambda\left( \set{q\in [p,r)\ :\ \beta_{n+1}(q)=\cdots=\beta_{n+t}(q)=0}\right)\ge \frac{(p-1)^3}{M^2r^{t+2}}(r-p).
\end{equation*}
\end{lemma}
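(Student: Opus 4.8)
The plan is to cut $[p,r)$ into the level sets of the initial block $\beta_1(q)\cdots\beta_n(q)$ and to apply Lemma \ref{l61} slice by slice, using that a slice lying inside $[p,r)$ has its endpoints squeezed between $p$ and $r$. Concretely: since $\beta_1(q)\cdots\beta_n(q)\in\set{0,\ldots,M}^n$ takes only finitely many values as $q$ runs over $[p,r)$, and $q\mapsto\beta(q)$ is strictly increasing for the lexicographic order, each level set $\set{q\in[p,r):\beta_1(q)\cdots\beta_n(q)=\eta}$ is a subinterval of $[p,r)$; hence $[p,r)$ is a disjoint union of finitely many consecutive intervals $[a_1,b_1),\ldots,[a_K,b_K)$ with $a_1=p$, $b_K=r$, $a_{j+1}=b_j$, on each of which $\beta_1\cdots\beta_n$ equals a fixed $\eta^{(j)}\in B_n(\bb')$. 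To $\eta^{(j)}$ I attach the two intervals described just before Lemma \ref{l61}: the full level set $[q_1^{(j)},q_2^{(j)})\supseteq[a_j,b_j)$ and its sub-interval $[q_1^{(j)},q_3^{(j)})$ on which moreover $\beta_{n+1}=\cdots=\beta_{n+t}=0$, so that
\begin{multline*}
\set{q\in[p,r):\beta_{n+1}(q)=\cdots=\beta_{n+t}(q)=0}\\
=\bigsqcup_{j=1}^K\Big([q_1^{(j)},q_3^{(j)})\cap[a_j,b_j)\Big).
\end{multline*}

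Next I would treat the slices that are \emph{full} level sets, i.e.\ those $j$ with $p\le q_1^{(j)}=a_j<b_j=q_2^{(j)}\le r$. For such $j$ we have $[q_1^{(j)},q_3^{(j)})\subseteq[a_j,b_j)$, and Lemma \ref{l61} together with the monotonicity of $x\mapsto(x-1)^3$ and $x\mapsto x^{-(t+2)}$ and the bounds $q_1^{(j)}\ge p$, $q_2^{(j)}\le r$ gives
\begin{multline*}
\lambda\big([q_1^{(j)},q_3^{(j)})\cap[a_j,b_j)\big)=q_3^{(j)}-q_1^{(j)}\\
\ge\frac{(q_1^{(j)}-1)^3}{M^2(q_2^{(j)})^{t+2}}\big(q_2^{(j)}-q_1^{(j)}\big)\ge\frac{(p-1)^3}{M^2r^{t+2}}\,(b_j-a_j).
\end{multline*}
Summing over all full slices gives $\tfrac{(p-1)^3}{M^2r^{t+2}}$ times their total length; when every slice is full — in particular whenever $[p,r)$ is itself a level set of the first $m\le n$ digits, which is the situation arising in the applications — that total length is $r-p$ and the lemma follows.

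The only real obstacle is the \emph{boundary} slices: at most two of the $[a_j,b_j)$ — the first and the last — may fail to be full level sets, which happens exactly when $p$ (resp.\ $r$) falls strictly inside a level set of the first $n$ digits. For the last slice one still has $a_K=q_1^{(K)}\ge p$, so its good part is $[a_K,\min(q_3^{(K)},r))$ and I would bound its length below by applying Lemma \ref{l61} to the full cylinder $[q_1^{(K)},q_2^{(K)})$ (note $q_2^{(K)}\ge r$) plus an elementary comparison of $q_3^{(K)}-a_K$ with $r-a_K$. The first slice is the delicate one: there $q_1^{(1)}\le p$, so $[q_1^{(1)},q_3^{(1)})$ may lie entirely to the left of $p$, contributing nothing; here I would either reduce to the level-set-aligned situation above (so that no such slice occurs), or carry out a sharper comparison of $q_3^{(1)}$ with $p$ from the explicit estimates behind Lemma \ref{l61}. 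Apart from this endpoint bookkeeping the proof is simply the substitution $q_1\ge p$, $q_2\le r$ into Lemma \ref{l61} followed by a sum, so I expect the two straddling level sets to carry essentially all of the difficulty.
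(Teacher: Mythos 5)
Your overall strategy --- partition $[p,r)$ into the level-$n$ cylinders $[q_1,q_2)$, apply Lemma \ref{l61} slice by slice, substitute $q_1\ge p$, $q_2\le r$, and sum --- is exactly the computation the paper performs. However, the point you leave open, the left boundary slice, is not ``endpoint bookkeeping'': it is the entire difficulty, and your fallback of a ``sharper comparison of $q_3^{(1)}$ with $p$'' cannot succeed, because for a fixed small $n$ the asserted inequality can genuinely fail when $[p,r)$ lies inside a single level-$n$ cylinder beyond its zero part. Concretely, take $M=1$, $n=t=1$, $[p,r)=[1.7,1.9)$: every $q\in[p,r)$ exceeds $\frac{1+\sqrt5}{2}$, so $\beta_1(q)\beta_2(q)=11$ for all such $q$, the set $\set{q\in[p,r):\beta_2(q)=0}$ is empty, yet $\frac{(p-1)^3}{M^2r^{t+2}}(r-p)>0$. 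Hence no direct slice-by-slice argument valid for arbitrary $p,r,n$ can exist; the only workable route is the one you mention only in passing, namely arranging that every slice is a full cylinder. That is precisely what the paper does: since the bases with finite greedy expansion are dense, it suffices (approximating $p$ from above and $r$ from below, and using the monotonicity of the constant $\frac{(p-1)^3}{M^2r^{t+2}}$) to treat $p,r$ with $\beta(p),\beta(r)$ finite, and then to work with cylinders at a level $n$ beyond the lengths of these expansions, so that $p$ and $r$ are themselves left endpoints of cylinders and the cylinders meeting $[p,r)$ form an exact finite partition of it; the proof is then your ``full slice'' computation and nothing more. (In effect the paper proves the estimate for all sufficiently large $n$ given $p,r$, which is what the later applications use; your proposal, by trying to keep $n$ arbitrary, is attempting something that is false.)

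A smaller remark on your last slice: applying the statement of Lemma \ref{l61} to the full cylinder $[q_1^{(K)},q_2^{(K)})$ with $q_2^{(K)}\ge r$ does not directly yield the constant $\frac{(p-1)^3}{M^2r^{t+2}}$, because $q_2^{(K)}$ enters the bound through the denominator $q_2^{t+2}$ and may exceed $r$; the quantity $\frac{(q_2-q_1)}{q_2^{t+2}}$ is not monotone in $q_2$, so one would have to rerun the proof of Lemma \ref{l61} with $r$ in place of $q_2$ (which is legitimate, since $\sum_{i\le n}\eta_i r^{-i}+\sum_{i>n}Mr^{-i}\ge 1$ for $r$ in the closure of the cylinder). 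Under the paper's density reduction this case, too, simply does not arise.
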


Before proving the lemma we recall that the bases $q$ for which $\beta(q)$ is finite form a (countable) dense set in $[1,M+1]$. 
Indeed, if $\beta(q)$ is infinite for some $q$, then the truncated sequences $\beta_1(q)\cdots\beta_k(q)0^{\infty}$ belong to $\bb'$ for all $k=1,2,\ldots$ by an elementary reasoning given in \cite[Lemma 3.1]{KomornikLoreti2007}. 
Therefore there exist bases $q_k\in [1,M+1]$ such that 
\begin{equation*}
\beta(q_k)=\beta_1(q)\cdots\beta_k(q)0^{\infty},
\end{equation*}
and then $q_k\to q$.

\begin{proof}
We use the notations of the preceding lemma.

We may assume by density that $\beta(p)$ and $\beta(r)$ are finite. 
Choose a sufficiently large integer $n$ such that $\beta_i(p)=\beta_i(r)=0$ for all $i>n$, and consider the intervals $[q_1,q_2)$ corresponding to $n$.
Then some of these intervals form a finite partition of $[p,r)$. 
Since we have 
\begin{equation*}
\frac{q_3-q_1}{q_2-q_1}\ge \frac{(q_1-1)^3}{M^2q_2^{t+2}}\ge \frac{(p-1)^3}{M^2r^{t+2}}
\end{equation*}
for each of these intervals by the preceding lemma, the required inequality follows by summing the inequalities
\begin{equation*}
q_3-q_1\ge \frac{(p-1)^3}{M^2r^{t+2}}(q_2-q_1).\qedhere
\end{equation*}
\end{proof}

\begin{lemma}\label{l63}
Given an arbitrary real number $s>1$, there exists a sequence $(n_k)$ of natural numbers satisfying the inequalities 
\begin{equation*}
n_k>\log_s\left( n_1+\cdots+n_k\right) ,\quad k=1,2,\ldots
\end{equation*}
and the divergence relation 
\begin{equation*}
\sum_{k=1}^{\infty}s^{-n_k}=\infty.
\end{equation*}
\end{lemma}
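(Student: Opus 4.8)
The plan is to construct the sequence $(n_k)$ greedily in blocks, making $n_k$ large enough to satisfy the lower-bound constraint while keeping it small enough that the tail $\sum s^{-n_k}$ does not converge. The key tension is that the constraint $n_k > \log_s(n_1+\cdots+n_k)$ forces $n_k$ to grow (since the partial sums grow), so the terms $s^{-n_k}$ shrink; we must ensure they shrink slowly enough. First I would observe that if we take $n_k$ to be \emph{constant on long runs} — say, use the value $m$ for a whole block of consecutive indices — then within such a block the partial sum grows linearly, so the constraint $n_k > \log_s(\text{partial sum})$ reads roughly $m > \log_s(Cm)$ for the length $C$ of the block, which is satisfied as long as the block is not too long relative to $s^m$. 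Concretely, I would let the value $m$ be used for about $\lfloor s^{m}/m \rfloor$ consecutive indices (or any comparable count), so that each block contributes a total of order $s^{m}\cdot\frac{1}{m}\cdot s^{-m} = \frac1m$ to the sum $\sum s^{-n_k}$; summing over $m=2,3,\dots$ gives a divergent harmonic-type series.

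More carefully: define, for each integer $m\ge m_0$ (with $m_0$ chosen below), a block $B_m$ consisting of $N_m := \lfloor s^{m-1} \rfloor$ indices, all assigned the value $n_k = m$. List the blocks $B_{m_0}, B_{m_0+1}, \dots$ in order to obtain the sequence $(n_k)$. Then for an index $k$ lying in block $B_m$, the partial sum $n_1+\cdots+n_k$ is at most
\begin{equation*}
\sum_{\mu=m_0}^{m} \mu N_\mu \le \sum_{\mu=m_0}^{m}\mu\, s^{\mu-1} \le C_s\, m\, s^{m-1}
\end{equation*}
for a constant $C_s$ depending only on $s$ (the geometric-type sum is dominated by its last term up to a factor $\frac{s}{s-1}$, absorbing the polynomial factor $m$ into $C_s$ by enlarging it). Hence $\log_s(n_1+\cdots+n_k) \le (m-1) + \log_s(C_s m) < m = n_k$ provided $m$ is large enough that $\log_s(C_s m) < 1$ fails to... — rather, provided $\log_s(C_s m) \le 1$, i.e. $m \le s/C_s$; that is the wrong direction, so instead I would thin the blocks: take $N_m := \lfloor s^{m}/m^2\rfloor$. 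Then $\sum_{\mu\le m}\mu N_\mu \le \sum_{\mu\le m} s^{\mu}/\mu \le C_s s^{m}/m$, so $\log_s(n_1+\cdots+n_k)\le m - \log_s m + \log_s C_s < m$ once $m \ge m_0$ with $m_0$ chosen so that $\log_s m_0 > \log_s C_s$. This secures the constraint.

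Finally, the divergence: each block $B_m$ contributes $N_m \cdot s^{-m} = \lfloor s^{m}/m^2\rfloor \, s^{-m}$ to $\sum_k s^{-n_k}$. For $m$ large this is comparable to $1/m^2$... which \emph{converges}, so I would instead choose $N_m := \lfloor s^m / m \rfloor$ and re-examine the constraint: then $\sum_{\mu\le m}\mu N_\mu \le \sum_{\mu\le m} s^\mu \le \frac{s}{s-1}s^m$, giving $\log_s(n_1+\cdots+n_k) \le m + \log_s\frac{s}{s-1} < m+1$, which is \emph{not} quite $< m$. The clean fix is to shift indices: assign the value $m+1$ (rather than $m$) to block $B_m$ of length $N_m = \lfloor s^m/m\rfloor$; then $n_k = m+1 > m + \log_s\frac{s}{s-1}$ holds for all $m$ with $\log_s\frac{s}{s-1} \le 1$, i.e. always (since $\frac{s}{s-1}\le s$ would need $s\ge 2$; for $1<s<2$ one takes the value $m+K$ for a fixed integer $K$ with $s^K \ge \frac{s}{s-1}$). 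With this choice each block contributes $N_m s^{-(m+K)} = \lfloor s^m/m\rfloor s^{-m-K} \asymp \frac{1}{m}$, and $\sum_m \frac1m = \infty$. The main obstacle, and the only delicate point, is balancing these two competing requirements — block lengths large enough for divergence, small enough for the logarithmic constraint — which the block construction with the index shift $K$ resolves; the rest is the routine estimation of geometric sums indicated above.
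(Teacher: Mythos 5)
Your route is genuinely different from the paper's: the paper proves this lemma by citation, noting that for $s=2$ it is \cite[Lemma 6]{ErdosJooKomornik1990} and that the argument there goes through verbatim for every $s>1$, whereas you give a self-contained block construction. The idea itself is sound and is the natural one: put the constant value $m+K$ on a block of $N_m=\lfloor s^m/m\rfloor$ consecutive indices, so that each block contributes roughly $s^{-K}/m$ to $\sum_k s^{-n_k}$ (hence divergence, by the harmonic series), while the partial sums of the $n_k$ stay of order $s^m$, comfortably below $s^{n_k}=s^{m+K}$.

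There is, however, a concrete slip in your final choice of the shift $K$. After replacing the block value $m$ by $m+K$ you continue to use the bound $n_1+\cdots+n_k\le\sum_{\mu\le m}\mu N_\mu\le\frac{s}{s-1}\,s^m$, but the relevant sum is now $\sum_{\mu\le m}(\mu+K)N_\mu$, which is larger; moreover you only require $s^K\ge\frac{s}{s-1}$, while the constraint needs the \emph{strict} inequality $K>\log_s\frac{s}{s-1}$ plus room for that inflation. In the borderline case this verification actually fails: for $s=2$, $K=1$, $N_\mu=\lfloor 2^\mu/\mu\rfloor$, the partial sum at the end of block $B_m$ is at least $2^{m+1}-2^{m_0}+2^m/m-(m+1)^2>2^{m+1}$ for all large $m$, so $n_k=m+1>\log_2(n_1+\cdots+n_k)$ is violated; the same happens whenever $s^K=\frac{s}{s-1}$ exactly (e.g.\ $s$ the Golden Ratio, $K=2$). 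The repair is immediate and does not affect divergence: take $K$ one unit larger (say any integer $K$ with $s^K>\frac{2s}{s-1}$), or start the blocks at a large $m_0$ and require $s^K>(1+K/m_0)\frac{s}{s-1}$; each block still contributes at least $s^{-K}/m-s^{-m-K}$, so $\sum_k s^{-n_k}=\infty$ remains valid. With that adjustment your proof is complete.
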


\begin{proof}
For $s=2$ this was proved in \cite[Lemma 6]{ErdosJooKomornik1990}. 
The proof remains valid for every $s>1$.
\end{proof}

Now we are ready to prove Theorem \ref{t15}:

\begin{proof}[Proof of Theorem \ref{t15}]
By density it suffices to show for any fixed $1<p< r\le M+1$, the required property holds for almost all $q\in [p,r)$.
For convenience we normalize $\lambda$ and we use the equivalent probabilistic measure $\mu:=\frac{\lambda}{r-p}$ on $[p,r)$.
Then we may adapt the usual proof of the Borel--Cantelli lemma.

Choose a sequence $(n_k)$ satisfying the conditions of preceding lemma with $s:=r$, and set 
\begin{equation*}
C_j:=\set{q\in [p,r)\ :\ \sum_{i=n_1+\cdots+n_{j-1}+1}^{n_1+\cdots+n_j}\beta_i(q)>0},\quad j=1,2,\ldots .
\end{equation*}
It follows from Lemma \ref{l62} and \ref{l63} that 
\begin{equation*}
\mu\left( \cap_{j=k}^{\infty}C_j\right) \le \prod_{j=k}^{\infty}\left( 1-\frac{(p-1)^3}{M^2r^2}r^{-n_j}\right) =0
\end{equation*}
for every $k=1,2,\ldots .$

Therefore $C:=\cup_{k=1}^{\infty}\cap_{j=k}^{\infty}C_j$ has also zero Lebesgue measure. 
We complete the proof by observing that if $q\in [p,r)\setminus C$, then $\beta(q)$ has the required property for infinitely many $m=n_1+\cdots+n_k$.
\end{proof}

Finally we compute the Lebesgue measure of $\uu$:

\begin{proof}[Proof of Theorem \ref{t16} (i)]
Since $\uuu\setminus\uu$ is countable, it suffices to prove that $\uu$ is a null set.
Furthermore, it suffices to prove that $\uu\cap[p,M+1)$ is a null set for each $p\in (1,M+1)$ such that $\beta(p)$ is finite.

It follows from the lexicographical characterization \eqref{51} of $\uu$ that $\uu\cap[p,M+1)\subseteq C$, where $C$ is the null set in the proof of the above lemma, corresponding to the choice $[p,r)=[p,M+1)$. 
Hence $\uu\cap[p,M+1)$ is a null set indeed.
\end{proof}

\section{Proof of Theorem \ref{t17}}\label{s7}

In view of Theorems \ref{t11} and \ref{t14} it suffices to prove that $D'<0$ almost everywhere in $(q',\infty)$.
This was implicitly proved in \cite[Theorems 2.5 and 2.6]{KongLi2014}.
Here we give an alternative proof.

Since $\uuu$ is a null set by Theorem \ref{t15} (i), it suffices to prove that $D'<0$  everywhere in each connected component $I=(q_0,q_0^*)$ of $(q',\infty)\setminus\uuu$. 
Fixing $p\in (q_0,q_0^*)$ arbitrarily, we deduce from Theorem \ref{t13} and Lemma \ref{l211} that 
\begin{equation*}
D(q)=\frac{h(\uu_p')}{\log q}
\end{equation*}
for all $q\in I$, and therefore 
\begin{equation*}
D'(q)=-\frac{h(\uu_p')}{q(\log q)^2}
\end{equation*}
for all $q\in I$. 
Since $p>q'$ and therefore $h(\uu_p')>0$ by Theorem \ref{t11}, we have $D'(q)<0$ for all $q\in I$ indeed. 

\begin{remark}
Since $q'$ and $M+1$ are the smallest and largest elements of $\uuu$, the first and last connected components of $(1,\infty)\setminus\uuu$ are $(1,q')$ and $(M+1,\infty)$.

We recall from \cite{DeVriesKomornik2009} that the left and right edpoints of the remaining connected components $I=(q_0,q_0^*)$ run over $\uuu\setminus\uu$ and some proper subset $\uu^*$ of $\uu$, respectively.

It follows from some theorems of Parry \cite{Parry1960} and Solomyak \cite{Solomyak1994} that each element of $\uuu\setminus\uu$ is an algebraic integer, all of whose conjugates are smaller than the Golden Ratio in modulus.

On the other hand, it was proved in \cite{KongLi2014} that the points $q_0^*$, called \emph{de Vries--Komornik numbers}, are transcendental.
The smallest one is the \emph{Komornik--Loreti constant} $q'$.
Their expansions are closely related to the classical Thue--Morse sequence.
\end{remark}

\section*{acknowledgments}
The second author is supported by the National
Natural Science Foundation of China no  11401516 and JiangSu Province Natural
Science Foundation for the Youth no BK20130433. The third author is supported by the
National Natural Science Foundation of China no 11271137.


\begin{thebibliography}{99}

\bibitem{BaiocchiKomornik2007} C. Baiocchi and V. Komornik,
{\em Greedy and quasi-greedy expansions in non-integer bases},
arXiv: 0710.3001 [math.], October 16, 2007.

\bibitem{BaatzKomornik2011}
M. Baatz, V. Komornik,
\emph{Unique expansions in integer bases with extended alphabets},
Publ. Math. Debrecen 79 (2011), 3--4, 251--267.

\bibitem{Baker2012}
S. Baker,
\emph{Generalised golden ratios over integer alphabets,}
Integers 14 (2014), 28pp.

\bibitem{DaroczyKatai1993}
Z. Daróczy, I. Kátai,
\emph{Univoque sequences},
Publ. Math. Debrecen 42 (1993), no. 3--4, 397--407.

\bibitem{DaroczyKatai1995} Z. Daróczy and I. Kátai,
{\em On the structure of univoque numbers},
Publ. Math. Debrecen 46 (1995), 3-4, 385--408.

\bibitem{DeVries2008}
M. de Vries,
\emph{A property of algebraic univoque numbers},
Acta Math.\ Hungar. 119 (2008), no.  1--2, 57--62.

\bibitem{DeVries2009}
M. de Vries,
\emph{On the number of unique expansions in noninteger bases},
Topology Appl.  156  (2009),  no. 3, 652--657.

\bibitem{DeVriesKomornik2009}
M. de Vries, V. Komornik,
{\em Unique expansions of real numbers},
Adv. Math. 221 (2009), 390--427.

\bibitem{DeVriesKomornik2010}
M. de Vries, V. Komornik,
\emph{A two-dimensional univoque set},
Fund. Math. 212 (2011), 175--189.

\bibitem{DeVriesKomornik2014}
M. de Vries, V. Komornik,
\emph{Expansions in non-integer bases},
in \emph{Combinatorics, Words and Symbolic Dynamics}, ed. V. Berté and M. Rigo,
to appear.

\bibitem{ErdosHorvathJoo1991} P. Erdős, M. Horváth and I. Joó,
{\em On the uniqueness of the expansions} $1=\sum q^{-n_i}$,
Acta Math. Hungar. 58 (1991), 333--342.

\bibitem{ErdosJoo1991} 
P. Erdős, I. Joó,
{\em On the expansion $1=\sum q^{-n_i}$},   
Period. Math. Hungar. 23 (1991), no. 1, 25--28.


\bibitem{ErdosJooKomornik1990} P. Erdős, I. Joó and V. Komornik,
{\em Characterization of the unique expansions $1=\sum
q^{-n_i}$ and related problems},   Bull. Soc. Math. France  118
(1990), 377--390.

\bibitem{Falconer2003} K. Falconer,
\emph{Fractal Geometry. Mathematical Foundations and Applications},
John Wiley \& Sons, Chicester, second edition, 2003.

\bibitem{GlendinningSidorov2001} P. Glendinning and N. Sidorov,
{\em Unique representations of real numbers in non-integer bases},
Math. Res. Letters 8 (2001), 535--543.

\bibitem{Hutchinson1981} J. Hutchinson,
   {\em Fractals and self-similarity},
{Indiana Univ. Math. J.} 30 (1981), 713--747.

\bibitem{Kallos1999} G. Kallós,
{\em The structure of the univoque set in the small case},
Publ. Math. Debrecen 54 (1999), 1--2, 153--164.

\bibitem{Kallos2001}
G. Kallós,
\emph{The structure of the univoque set in the big case},
Publ. Math. Debrecen 59 (2001), no. 3--4, 471--489.

\bibitem{KataiKallos2001} I. Kátai and G. Kallós,
{\em On the set for which $1$ is univoque},
Publ.\ Math.\ Debrecen 58 (2001), 4, 743--750.

\bibitem{Komornik2011}
V. Komornik,
\emph{Expansions in noninteger bases},
Tutorial and review, Workshop on Numeration, Lorentz Center, Leiden, June 7--11, 2010,
Integers 11B (2011), A9, 1--30.

\bibitem{KomornikLoreti1998}
V. Komornik, P. Loreti,
{\em Unique developments in noninteger bases},
Amer. Math. Month\-ly, 105 (1998), 636--639.

\bibitem{KomornikLoreti2002}
V. Komornik, P. Loreti,
{\em Subexpansions, superexpansions and uniqueness properties in noninteger bases},
Period. Math. Hungar. 44 (2002), no. 2, 195--216.

\bibitem{KomornikLoreti2007}
V. Komornik, P. Loreti,
{\em On the  topological structure of univoque sets},
J. Number Theory, 122 (2007), 157--183.

\bibitem{KongLi2014}
Derong Kong, Wenxia Li,
{\em Hausdorff dimension of unique beta expansions,}
Nonlinearity, 28 (2015), 187–209.

\bibitem{KongLiDekking2010}
Derong Kong, Wenxia Li and F. M. Dekking,
{\em Intersections of homogeneous Cantor sets and
beta-expansions,}
Nonlinearity 23 (2010), 2815–2834.

\bibitem{LindMarcus1995}
D. Lind, B. Marcus,
\emph{An Introduction to Symbolic Dynamics and Coding,}
Cambridge University Press, 1996.

\bibitem{MauldinWilliams1988}
D. Mauldin,  C. Williams,
\emph{Hausdorff dimension in graph directed constructions,}
Trans. Amer. Math. Soc.  309 (1988),  811--829.

\bibitem{Parry1960} W. Parry,
{\em On the $\beta$-expansion of real numbers},
Acta  Math. Hungar. 11 (1960), 401--416.

\bibitem{Renyi1957} A. Rényi,
{\em Representations for real numbers and their ergodic properties},
Acta  Math. Hungar. 8 (1957), 477--493.

\bibitem{Sid2003a}
N. Sidorov,
\emph{Almost every number has a continuum of $\beta$-expansions},
Amer. Math. Monthly 110 (2003), 9, 838--842.

\bibitem{Schmeling1997}
J. Schmeling,
\emph{Symbolic dynamics for $\beta$-shifts and self-normal numbers},
Ergodic Theory Dynam. Systems 17 (1997), 675--694.

\bibitem{Sidorov2003}
N. Sidorov,
\emph{Arithmetic dynamics},
in \emph{Topics in dynamics and ergodic theory},
London Math. Soc. Lecture Note Ser. 310 (2003), 145--189.

\bibitem{Solomyak1994}
B. Solomyak,
\emph{Conjugates of beta-numbers and the zero-free domain for a class of analytic functions},
Proc. London Math. Soc. (3) 68 (1994), 3, 477--498.
\end{thebibliography}
\end{document}